\newtheorem{theorem}{Theorem}[section]
\newtheorem{proposition}[theorem]{Proposition}
\newtheorem{lemma}[theorem]{Lemma}
\newtheorem{corollary}[theorem]{Corollary}
\theoremstyle{remark}
\theoremstyle{definition}
\newtheorem{definition}[theorem]{Definition}
\newtheorem{remark}[theorem]{Remark}
\numberwithin{equation}{section}
\numberwithin{figure}{section}
\numberwithin{table}{section}
\newcommand{\bx}{\bm{x}}
\newcommand{\dN}{\mathbb{N}}
\newcommand{\dR}{\mathbb{R}}
\renewcommand{\i}{\sqrt{-1}}
\newcommand{\dT}{\mathbb{T}}
\newcommand{\dZ}{\mathbb{Z}}
\newcommand{\w}{\omega}
\newcommand{\fq}{\mathfrak{q}}
\newcommand{\fs}{\mathfrak{s}}
\newcommand{\fM}{\mathfrak{M}}
\newcommand{\RR}{\mathbb{R}}
\newcommand{\ZZ}{\mathbb{Z}}
\newcommand{\p}{\partial}
\newcommand{\cB}{\mathcal{B}}
\newcommand{\CC}{\mathbb{C}}
\DeclareMathOperator{\ALG}{ALG}
\DeclareMathOperator{\ALH}{ALH}
\DeclareMathOperator{\coker}{coker}
\DeclareMathOperator{\dvol}{dvol}
\DeclareMathOperator{\End}{End}
\DeclareMathOperator{\Id}{Id}
\DeclareMathOperator{\I}{I}
\DeclareMathOperator{\II}{II}
\DeclareMathOperator{\Image}{Image}
\DeclareMathOperator{\pr}{pr}
\DeclareMathOperator{\Ric}{Ric}
\DeclareMathOperator{\Rea}{Re}
\DeclareMathOperator{\Rm}{Rm}
\DeclareMathOperator{\SO}{SO}
\DeclareMathOperator{\SL}{SL}
\DeclareMathOperator{\Supp}{Supp}
\DeclareMathOperator{\Vol}{Vol}
\DeclareMathOperator{\Nil}{Nil}
\begin{document}

 \title{Hodge theory on ALG$^*$ manifolds}

 \author{Gao Chen} \thanks{The first named author is supported by the Project of Stable Support for Youth Team in Basic Research Field, Chinese Academy of Sciences, YSBR-001. The second named author is supported by NSF Grants DMS-1811096 and DMS-2105478. The third named author is supported by NSF Grant DMS-1906265. }
  \address{Institute of Geometry and Physics, University of Science and Technology of China, Shanghai, China, 201315}
 \email{chengao1@ustc.edu.cn}
 \author{Jeff Viaclovsky}
 \address{Department of Mathematics, University of California, Irvine, CA 92697}
 \email{jviaclov@uci.edu}
 \author{Ruobing Zhang}
\address{Department of Mathematics, Princeton University, Princeton, NJ, 08544}
\email{ruobingz@princeton.edu}

\begin{abstract}
We develop a Fredholm Theory for the Hodge Laplacian in weighted spaces on ALG$^*$ manifolds in dimension four.  We then give several applications of this theory. 
First, we show the existence of harmonic functions with prescribed asymptotics at infinity. A corollary of this is a non-existence result for ALG$^*$ manifolds with non-negative Ricci curvature having group $\Gamma = \{e\}$ at infinity.  Next, we prove a Hodge decomposition for the first de Rham cohomology group of an ALG$^*$ manifold. A corollary of this is vanishing of the first betti number for any ALG$^*$ manifold with non-negative Ricci curvature. Another application of our analysis is to determine the optimal order of ALG$^*$ gravitational instantons.
\end{abstract}

\date{}

\maketitle

\setcounter{tocdepth}{1}
\tableofcontents

\section{Introduction}\label{ss:ALGstar}
In this paper, we are interested in complete Riemannian metrics in dimension $4$ which are asymptotic to certain Ricci-flat model spaces at infinity. Many of these types of geometries have been previously studied, and are known as ALE, ALF, ALG, ALH, and two exceptional types known as $\ALG^*$ and $\ALH^*$; see for example \cite{BKN, BM, CCI, Hein, HSVZ} and the references therein. In this paper, we will concentrate on the first exceptional type, $\ALG^*$, which we define next. Let $\Nil^3_{\nu}$ be the Heisenberg nilmanifold of degree $\nu \in \ZZ_+$ with coordinates $(\theta_1, \theta_2, \theta_3)$ on the universal cover; see Section~\ref{s:model-metric} for our conventions. 
\begin{definition}[ALG$^*_{\nu}$ model space] 
\label{d:ALG*-model}
For $\nu \in \ZZ_+$, the ALG$^*_{\nu}$ model manifold is 
\begin{align}
\widehat{\fM}_{\nu}(R) \equiv  (R, \infty) \times \Nil^3_{\nu}.
\end{align}
Let $V = \kappa_0 + \frac{\nu}{2\pi} \log r$, where $\kappa_0 \in \RR$, and assume that $R$ satisfies  $R >   e^{\frac{2\pi}{\nu}(1-\kappa_0)}$. The model metric on $\widehat{\fM}_{\nu}(R)$ given by
\begin{align}
\label{intro:metricexp}
g^{\widehat{\fM}}_{\kappa_0} = V ( dr^2 + r^2 d\theta_1^2 + d \theta_2^2) + V^{-1} \frac{\nu^2}{4\pi^2} \Big( d \theta_3 - \theta_2 d \theta_1 \Big)^2.
\end{align}
Given $L > 0$,  we let $g^{\widehat{\fM}}_{\kappa_0,L} = L^2 g^{\widehat{\fM}}_{\kappa_0}$.
\end{definition}
The metric in \eqref{intro:metricexp}  arises naturally from the Gibbons-Hawking ansatz; see Section~\ref{s:model-metric}.
Note that the model metric restricts to a left-invariant metric on any cross-section $\{r_0\} \times \Nil^3_{\nu}$. Next, let $\Gamma$ be any finite group acting freely and isometrically on  $\widehat{\fM}_{\nu}(R)$, and denote the quotient space by $\fM_{\nu}(R)$. We then define the following, which are the main objects of interest in this paper.  
\begin{definition}[ALG$^*_{\nu}$-$\Gamma$ manifold]
\label{d:ALGstar} 
A complete $4$-manifold $(X,g)$
is called an ALG$^*_{\nu}$-$\Gamma$ manifold of order $\mathfrak{n} > 0$ with parameters 
$\nu \in \ZZ_+$, $\kappa_0 \in \RR$ and $L \in \RR_+$ if there exist an ALG$^*$ model space $(\widehat{\fM}_{\nu}(R), g^{\widehat{\fM}}_{\kappa_0,L})$  with $R >0$, a compact subset $X_R \subset X$, and a diffeomorphism 
$\Phi: \widehat{\fM}_{\nu}(R) /\Gamma \rightarrow X \setminus X_R$ such that
  \begin{align}
\label{ALGstarmetric}
\big|\nabla^k_{g^{\fM}}(\Phi^*g- g^{{\fM}}_{\kappa_0,L})\big|_{g^{\fM}}=O(\fs^{-k-\mathfrak{n}}),
  \end{align}
as $\fs(x) \equiv r(x)V^{\frac{1}{2}}(x)\to \infty$, for any $x \in \widehat{\fM}_{\nu}(R)$, for any $k\in\dN_0 \equiv \ZZ_+ \cup \{0\}$.
\end{definition}
We note that allowing the quotient by $\Gamma$ is analogous to the asymptotically flat (AF) case versus the asymptotically locally Euclidean (ALE) case. 
Fix $x_0 \in X_R$, and for $x \in X$, define $s(x) \equiv d_g(x_0,x)$.
We also note that ALG$^*_{\nu}$-$\Gamma$ metrics satisfy the following properties:
\begin{enumerate}
\item   $s(x) \sim  \fs(\Phi^{-1}(x))$ as $s(x) \to \infty$;  see Remark~\ref{r:scomp},

\item  $\Vol_g(B_t(x_0)) \sim t^2$ as $t \to \infty$, and

\item  $|\Rm_g| = O( s^{-2} (\log s)^{-1})$ as $s(x) \to \infty$. 
\end{enumerate}

Fredholm Theory of the Hodge Laplacian for the geometries listed above has been developed in many works; see for example \cite{Bartnik, CCI, CCII, CCIII, HHM, HSVZ, Mazzeo, MinerbeALF}. It is also worth adding a historical remark regarding the ALG$^*$ model geometry. ALG$^*$ hyperk\"ahler manifolds appeared in the math literature before the special case of ALG manifolds. In fact, Cherkis-Kapustin first studied the ALG$^*$ model geometry, ALG$^*$ hyperk\"ahler manifolds, as well as D$_4$-ALG hyperk\"ahler manifolds; see \cite{CherkisKapustinALG}. The ALG name was introduced in that paper (page~2, last paragraph), which included both the ALG and the ALG$^*$ cases. Later, these cases were separated in order to distinguish the different model geometries.

In this paper, we will develop a Fredholm Theory for the Hodge Laplacian on ALG$^*_{\nu}$-$\Gamma$ manifolds; see Proposition~\ref{Weighted-analysis-X}. Due to the peculiarities of the asymptotic geometry of ALG$^*$ metrics, this is quite nontrivial; it is proved in Sections \ref{s:weighted-model} and \ref{s:weighted}, which rely on some lengthy formulas computed in the appendix. We will not describe this theory here in the introduction, but instead we will turn our attention to a number of applications. Note that $h_0 \equiv 1$ is obviously a harmonic function. Our first application is to the existence of non-constant harmonic functions with prescribed asymptotics. 
\begin{theorem}
\label{t:ehf}
 Let $(X,g)$ be an $\ALG^*_{\nu}$-$\Gamma$ manifold of order $\mathfrak{n} > 0$, 
and $k \in \dZ_+$. If  $r^k e^{\i k \theta_1}$ is invariant under $\Gamma$, then for any $0<\epsilon<\min\{k,\mathfrak{n}\}$, there exists a harmonic function $h_k : X \rightarrow \CC$ such that 
\begin{equation}
\label{e:hkexp}
h_k = r^k e^{\i k \theta_1} + O(s^{k - \epsilon}),
\end{equation} as $s \to \infty$.
\end{theorem}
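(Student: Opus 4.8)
The plan is to realize $h_k$ as a perturbation of the approximate harmonic function $r^k e^{\i k\theta_1}$, using the Fredholm theory for the Hodge Laplacian on $\ALG^*_\nu$-$\Gamma$ manifolds (Proposition~\ref{Weighted-analysis-X}). First I would compute $\Delta_g(r^k e^{\i k\theta_1})$ on the model end. Since the model metric \eqref{intro:metricexp} is of Gibbons--Hawking type with a harmonic potential $V$, the function $r^k e^{\i k \theta_1}$ (which is $\theta_2,\theta_3$-independent and harmonic with respect to the flat metric $dr^2 + r^2 d\theta_1^2 + d\theta_2^2$ in the $(r,\theta_1)$ variables) should be annihilated by the model Laplacian, or at worst produce an error term decaying faster than its own growth rate; the point is that multiplication of the flat-space metric by $V$ in three of the directions does not affect the Laplacian of a function depending only on the two flat directions $(r,\theta_1)$ beyond a controlled lower-order term. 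Combined with the order-$\mathfrak{n}$ decay \eqref{ALGstarmetric} of $g$ toward the model, this gives
\[
\Delta_g(r^k e^{\i k \theta_1}) = O(s^{k - 2 - \delta})
\]
on $X \setminus X_R$ for some $\delta > 0$; here one also uses property (3), that the curvature — hence the deviation encoded in $V$ and its derivatives — carries the extra logarithmic/polynomial gain. The hypothesis that $r^k e^{\i k\theta_1}$ is $\Gamma$-invariant is exactly what is needed for this function to descend to $X \setminus X_R$.

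Next I would introduce a smooth cutoff $\chi$ supported near infinity, equal to $1$ outside a large compact set, and set $u_0 = \chi \cdot r^k e^{\i k\theta_1}$, a globally defined function on $X$ with $\Delta_g u_0 \in$ a weighted space of functions decaying like $s^{k-2-\delta}$ (the cutoff region contributes only compactly supported, hence rapidly-decaying-enough, terms). The goal is then to solve $\Delta_g v = -\Delta_g u_0$ with $v = O(s^{k-\epsilon})$ and set $h_k = u_0 + v$. This requires that $\Delta_g$ be surjective from the weighted space with weight slightly below $k$ onto the weighted space with weight slightly below $k-2$. By the Fredholm theory, surjectivity at weight $\beta$ holds provided $\beta$ is not an indicial/exceptional value and provided the cokernel — which by duality is controlled by the kernel at the dual weight $-2-\beta$ (or whatever the precise duality pairing dictates for this asymptotic geometry) — vanishes. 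For weights in the appropriate range just below $k$, the dual weight is sufficiently negative that the only candidate obstructions are decaying harmonic forms/functions, and one rules these out using integration by parts together with the volume growth $\Vol_g(B_t) \sim t^2$ (property (2)): a harmonic function decaying at a negative rate on such a manifold must be constant, and a constant is not in the strictly-decaying dual space, so the pairing obstruction is empty. One must also check that $k$ itself (or the relevant shifted weight) avoids the discrete set of exceptional values, or else perturb $\epsilon$ to land in an allowable window; the indicial roots are computed from the model operator's separation of variables on $\Nil^3_\nu$, and $r^k e^{\i k\theta_1}$ corresponds to one such root, so one takes $\epsilon$ small enough that $k - \epsilon$ is below $k$ but above the next root down.

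The main obstacle I anticipate is \emph{not} the abstract solvability but the precise bookkeeping on the model end: verifying that $\Delta_g(r^k e^{\i k\theta_1})$ genuinely gains the extra decay $s^{-\delta}$ rather than merely $s^{k-2}$, and identifying the exact set of exceptional weights so that the target weight $k-\epsilon$ is admissible. This is where the ALG$^*$ geometry is delicate: the potential $V = \kappa_0 + \frac{\nu}{2\pi}\log r$ grows logarithmically, so $\fs = r V^{1/2} \sim r (\log r)^{1/2}$ and the dictionary between powers of $r$ (natural for separation of variables) and powers of $s$ (natural for weighted norms) involves logarithmic corrections; one must ensure these corrections are absorbed into the $O(s^{k-\epsilon})$ error by choosing $\epsilon$ strictly positive. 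Once the model computation is pinned down — using the explicit formulas from the appendix and the weighted mapping properties from Sections~\ref{s:weighted-model} and \ref{s:weighted} — the perturbation argument closes: elliptic regularity upgrades the weighted $L^2$ solution $v$ to the pointwise bound \eqref{e:hkexp}, and harmonicity of $h_k = u_0 + v$ is immediate by construction.
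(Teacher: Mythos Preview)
Your proposal is correct and follows essentially the same route as the paper: cut off the model function, show the Laplacian of the cutoff lies in a suitable weighted space, invoke the Fredholm package (Proposition~\ref{Weighted-analysis-X}(2)) with cokernel vanishing coming from the maximum principle, and finish by Sobolev embedding. Two small clarifications would tighten it: first, $r^k e^{\i k\theta_1}$ is \emph{exactly} harmonic for the model metric (see \eqref{Expansion-function}), so the entire error $\Delta_g u_0$ on the end comes from the order-$\mathfrak{n}$ deviation \eqref{ALGstarmetric}, not from any residual model term---your ``main obstacle'' is therefore less serious than you fear; second, the duality in this setting pairs weight $\mu$ with $-\mu$ (not $-2-\mu$), so choosing $\mu\in(k-1,k)$ with $\mu>k-\mathfrak{n}$ makes the dual weight $-\mu<0$, and the maximum principle immediately gives $\mathcal{H}^0_{-\mu}(X)=\{0\}$.
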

A corollary of this is the following non-existence result. 
\begin{corollary} 
\label{t:non-existence}
There do not exist any $\ALG^*_{\nu}$-$\Gamma$ manifolds of order $\mathfrak{n} > 0$ with $\Gamma = \{e\}$ and with non-negative Ricci curvature.
\end{corollary}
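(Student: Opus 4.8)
\medskip
\noindent\textbf{Proof strategy.}
The plan is to apply Theorem~\ref{t:ehf} at level $k=1$ to produce a non-constant harmonic function on $X$ whose gradient decays at infinity, and then to contradict $\Ric_g\ge 0$ by combining a Bochner estimate with the parabolicity of $X$. Since $\Gamma=\{e\}$, the hypothesis of Theorem~\ref{t:ehf} is vacuous, so there is a harmonic $h_1\colon X\to\CC$ with $h_1=re^{\i\theta_1}+O(s^{1-\epsilon})$; put $u:=\Rea h_1$, a real-valued harmonic function with $u=r\cos\theta_1+O(s^{1-\epsilon})$, which is non-constant --- indeed unbounded --- by \eqref{e:hkexp}.

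The first step is to check that $|\nabla u|_g\to 0$ as $s\to\infty$. In the orthonormal coframe $\{\,V^{1/2}\,dr,\ V^{1/2}r\,d\theta_1,\ V^{1/2}\,d\theta_2,\ V^{-1/2}\tfrac{\nu}{2\pi}(d\theta_3-\theta_2\,d\theta_1)\,\}$ that diagonalizes the model metric \eqref{intro:metricexp}, a direct computation gives
\[
\big|\nabla(r\cos\theta_1)\big|^2_{g^{\widehat{\fM}}_{\kappa_0}}=V^{-1}=\Big(\kappa_0+\tfrac{\nu}{2\pi}\log r\Big)^{-1}\longrightarrow 0 ,
\]
so, after rescaling by $L^2$ and using that $\Phi^*g-g^{\widehat{\fM}}_{\kappa_0,L}$ has size $O(\fs^{-\mathfrak n})$ with controlled derivatives by \eqref{ALGstarmetric}, also $\big|\nabla(r\cos\theta_1)\big|^2_{g}\to 0$ on the end. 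Differentiating the remainder in \eqref{e:hkexp} --- legitimate because $h_1$ is produced inside the weighted elliptic framework of Proposition~\ref{Weighted-analysis-X}, so that the expansion holds in a weighted $C^1$ (in fact $C^k$) sense --- yields $\big|\nabla(h_1-re^{\i\theta_1})\big|_g=O(s^{-\epsilon'})$ for some $\epsilon'>0$. Hence $|\nabla u|_g\to 0$, and in particular $|\nabla u|_g$ is bounded on all of $X$ (continuous on the compact core $X_R$, decaying on the end).

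The second step uses that $X$ is parabolic: by property~(2) above, $\Vol_g(B_t(x_0))\sim t^2$, so $\int^{\infty}\frac{t\,dt}{\Vol_g(B_t(x_0))}=\infty$, which is Grigor'yan's sufficient condition for parabolicity; consequently every subharmonic function on $X$ bounded above is constant. Applying the Bochner formula to the harmonic function $u$ and using $\Ric_g\ge 0$,
\[
\tfrac12\,\Delta_g|\nabla u|^2=|\nabla^2 u|^2+\Ric_g(\nabla u,\nabla u)\ \ge\ 0 ,
\]
so $|\nabla u|^2$ is a bounded subharmonic function on the parabolic manifold $X$, hence constant; together with $|\nabla u|^2\to 0$ at infinity this forces $|\nabla u|\equiv 0$, i.e. $u$ is constant, contradicting \eqref{e:hkexp}. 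Thus no such $X$ exists.

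The point I expect to require the most care is the gradient decay in the first step: that the remainder in \eqref{e:hkexp} can be differentiated with a gain, so that $|\nabla h_1|_g$ --- not merely $h_1$ --- satisfies the stated asymptotics. This is subtle precisely because the ALG$^*$ cross-sections collapse, and it is where one genuinely relies on the weighted estimates behind Theorem~\ref{t:ehf} rather than a naive rescaling argument. (Alternatively, the Cheng--Yau gradient estimate under $\Ric_g\ge 0$ shows directly that the linear-growth function $u$ has bounded gradient, after which only the mild claim $|\nabla u|_g\to 0$ along the end is needed, and this follows from the explicit model computation above.) Everything else --- parabolicity from quadratic volume growth, the Bochner inequality, and the Liouville property for bounded subharmonic functions on parabolic manifolds --- is standard.
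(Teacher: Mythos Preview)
Your proposal is correct and follows essentially the same strategy as the paper: use Theorem~\ref{t:ehf} with $k=1$ to produce a harmonic function whose gradient decays at infinity, then invoke the Bochner formula under $\Ric_g\ge 0$ to conclude that the gradient vanishes identically. The only difference is in the finishing step: the paper applies the strong maximum principle directly (since $|dh_1|_g^2$ is subharmonic, tends to $0$ at infinity, and is not identically zero, it would attain a positive interior maximum, a contradiction), whereas you route through parabolicity and the Liouville property for bounded subharmonic functions; the paper's argument is slightly more elementary, but both are valid.
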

To prove this, we will use Theorem \ref{t:ehf} to find a certain harmonic $1$-form which, using the Bochner formula, will lead to a contradiction. 
We note that there do exist complete ALG$^*_{\nu}$-$\Gamma$ metrics  with non-negative Ricci curvature for $\Gamma$ nontrivial. A rough analogy with the AF setting: any AF metric with non-negative Ricci curvature must be Euclidean space. But there are many non-flat ALE metrics with $\Gamma$ non-trivial and with non-negative Ricci curvature. 

Our next application is a Hodge decomposition theorem for the first de Rham cohomology group. Define the following subspace of $\Omega^1(\widehat{\fM}_{\nu}(R) / \Gamma)$:
\begin{align}
\mathcal{W}^1 \equiv
\begin{cases}
 \RR \cdot d \theta_2,  & \mbox{ if } \gamma^* d \theta_2 = d \theta_2  \mbox{ for all } \gamma \in \Gamma,\\
 \{0\}, & \mbox{ otherwise}.
\end{cases}
\end{align}

\begin{theorem}
\label{t:h1}
Let $(X,g)$ be an $\ALG^*_{\nu}$-$\Gamma$ manifold. Then
\begin{align}
\label{e:h1dr}
H^1_{dR}(X) \cong \{ \omega \in \Omega^1(X) \ | \ d\omega =0, \delta \omega= 0, \Phi^*\omega = \omega_0 + O(\fs^{-\epsilon}) \mbox{ as }\fs \to \infty , \ \omega_0 \in \mathcal{W}^1\},
\end{align}
for any $\epsilon$ satisfying $0 < \epsilon \ll 1$.  
\end{theorem}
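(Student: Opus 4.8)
The plan is to build an explicit isomorphism from $H^1_{dR}(X)$ to the space of $L^2$-or-slowly-growing harmonic $1$-forms appearing on the right-hand side, using the Fredholm theory of Proposition~\ref{Weighted-analysis-X} together with a Hodge-theoretic representative argument. First I would show that the right-hand side injects into $H^1_{dR}(X)$: any $\omega$ in that space is closed, so it defines a de Rham class, and if $\omega = df$ for some $f$, then $\Delta f = \delta df = \delta \omega = 0$, so $f$ is harmonic; using the asymptotic expansion $\Phi^*\omega = \omega_0 + O(\fs^{-\epsilon})$ with $\omega_0 \in \mathcal{W}^1 \subset \RR\cdot d\theta_2$ and the fact that $\theta_2$ has bounded range on the $\Nil^3_\nu$ fiber while $\fs \to \infty$, one sees $f$ grows at most like $\log s$ (or is bounded), and by the weighted analysis on the model — in particular the absence of harmonic functions with sub-linear growth other than constants, which should follow from the indicial root computation underlying Proposition~\ref{Weighted-analysis-X} — $f$ is constant, forcing $\omega = 0$. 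Hence the map is injective.

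Next I would prove surjectivity. Given a class in $H^1_{dR}(X)$, represent it by a smooth closed $1$-form $\eta$ that is in \emph{standard form} near infinity, i.e. $\Phi^*\eta$ equals a closed translation-invariant $1$-form on $\Nil^3_\nu$ plus a compactly supported correction; the space of closed translation-invariant $1$-forms on the cross-section, modulo exact ones, is what detects $H^1$ at infinity, and a dimension count on the nilmanifold shows the only such form surviving the $O(\fs^{-\epsilon})$ decay requirement after a gauge change is a multiple of $d\theta_2$ (the $d\theta_1$ and $d\theta_3$ directions are killed either because $r \to \infty$ makes $d\theta_1$ non-$L^2$-friendly in the wrong way, or because $d\theta_3 - \theta_2 d\theta_1$ is not closed / not of the right weight). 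Then I would solve $\Delta f = \delta \eta$ in a suitable weighted space: $\delta \eta$ is compactly supported (or fast-decaying), so by Proposition~\ref{Weighted-analysis-X} there is a solution $f$ with controlled growth, and $\omega \equiv \eta - df$ is then closed, coclosed, cohomologous to $\eta$, and has the asymptotics $\Phi^*\omega = c\, d\theta_2 + O(\fs^{-\epsilon})$. One must check that the freedom in choosing $f$ (the finite-dimensional cokernel/kernel contributions) does not obstruct landing in the stated space; this is where one invokes the precise indicial roots to ensure no intermediate-growth harmonic pieces interfere.

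The main obstacle I expect is the surjectivity step, specifically showing that every de Rham class admits a representative whose leading asymptotic term lies exactly in $\mathcal{W}^1$ and that the Poisson equation $\Delta f = \delta\eta$ can be solved without introducing a growing or non-decaying error that falls outside the allowed $O(\fs^{-\epsilon})$ window. This requires a careful bookkeeping of the indicial roots of the Hodge Laplacian on the ALG$^*$ model — in particular identifying the critical weights near $0$ — and matching them against the possible asymptotic terms coming from translation-invariant forms on $\Nil^3_\nu$; the peculiar mixing of the $r$-factor and the $V = \kappa_0 + \frac{\nu}{2\pi}\log r$ factor in the metric \eqref{intro:metricexp} means the relevant weighted spaces and their exceptional weights are genuinely subtle, and this is presumably why the paper defers to the lengthy appendix computations. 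A secondary technical point is verifying that when $\mathcal{W}^1 = \{0\}$ (the case $\gamma^* d\theta_2 \neq d\theta_2$ for some $\gamma$), the corresponding harmonic form must actually decay, so that $b_1$ is computed purely by $L^2$ cohomology; this follows from averaging over $\Gamma$ and the injectivity argument above.
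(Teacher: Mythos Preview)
Your overall strategy is sound, but there is a genuine gap in the injectivity argument, and your surjectivity approach, while plausible, differs from the paper's in a way you have not fully justified.

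\textbf{Injectivity.} You claim that ``the absence of harmonic functions with sub-linear growth other than constants\ldots should follow from the indicial root computation underlying Proposition~\ref{Weighted-analysis-X}.'' This is false: the indicial computation for the Laplacian on functions (equation \eqref{e:Harmonic-function} in the appendix) shows that at order $0$ there are \emph{two} solutions, $1$ and $V = \kappa_0 + \tfrac{\nu}{2\pi}\log r$. So the model admits a nonconstant harmonic function of logarithmic growth, and the expansion of your primitive $f$ is $f = A_0 + B_0 V + O(s^{-\epsilon})$; nothing local rules out $B_0 \neq 0$. The paper handles this by a global integration-by-parts argument: pairing $\Delta f = 0$ against the constant function $1$ and passing the boundary term to infinity forces $B_0 = 0$, after which the maximum principle gives $f$ constant. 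You need this step. (Your growth estimate ``$f$ grows at most like $\log s$'' is also too optimistic; integrating $\omega_0 + O(\fs^{-\epsilon})$ along radial paths only gives $f = O(s^{1-\epsilon})$.)

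\textbf{Surjectivity.} Your route---solve $\Delta f = \delta\eta$ for a function $f$ and set $\omega = \eta - df$---is different from the paper's. The paper instead solves a 1-form equation $\Delta\eta_4 = \eta_2 - \eta_3$, extracts $\eta_5 = \delta d\eta_4$, and then must subtract $A_0\,dh_1 + A_0'\,d\bar h_1$ (invoking Theorem~\ref{t:ehf}) because the harmonic expansion of $\eta_5$ at order $0$ may contain $A_0\,dz + A_0'\,d\bar z$, not just $C_0\,d\theta_2$. Your approach could in principle avoid that subtraction, since $\eta$ in standard form has no $dz$, $d\bar z$ component and $df$ would decay if $f$ lies in the right weighted space. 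But this hinges on solving $\Delta f = \delta_g\eta$ with $f \in W^{k+2,2}_{\mu'}$ for some $\mu' < 1$, which in turn requires $\delta_g\eta$ to decay like $\fs^{\mu'-2}$---and the naive bound $\delta_g\eta \in W^{k-1,2}_{\mu-1}$ (from $\eta \in W^{k,2}_\mu$, $\mu$ small) is \emph{not} enough. You would need to verify that the model-coclosed part $\delta_{g^{\fM}}(a_1\,d\theta_1 + a_2\,d\theta_2 + a_3\,d\theta_3)$ vanishes identically on the end, so that $\delta_g\eta$ inherits extra decay of order $\fs^{-\mathfrak{n}}$ from the metric perturbation. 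You assert ``$\delta\eta$ is compactly supported (or fast-decaying)'' without checking this. Your remarks about why ``$d\theta_1$ and $d\theta_3$ directions are killed'' are also imprecise: $|d\theta_1|_{g^{\fM}} = O(\fs^{-1})$ already decays and is absorbed into the error, while the genuinely dangerous order-$0$ harmonic terms are $dz$ and $d\bar z$, which have the same $(\log\fs)^{-1/2}$ size as $d\theta_2$.
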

The proof of this is found in Section \ref{s:hodge}. There is an analogous result for the second de Rham cohomology group which can be proved using similar methods, but for simplicity we do not state this in this paper.  As a corollary of Theorem~\ref{t:h1}, we have the following vanishing theorem. 
\begin{corollary}
\label{t:h10}
Let $(X,g)$ be an $\ALG^*_{\nu}$-$\Gamma$ manifold with non-negative Ricci curvature. Then the first betti number $b^1(X) = 0$.
\end{corollary}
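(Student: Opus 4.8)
The plan is to combine the Hodge-theoretic description of $H^1_{dR}(X)$ in Theorem~\ref{t:h1} with the Bochner formula and the maximum principle. By Theorem~\ref{t:h1}, $H^1_{dR}(X)$ is isomorphic to the space of $1$-forms $\omega \in \Omega^1(X)$ that are closed, coclosed (hence harmonic), and satisfy $\Phi^*\omega = \omega_0 + O(\fs^{-\epsilon})$ as $\fs \to \infty$ for some fixed $0 < \epsilon \ll 1$ and some $\omega_0 \in \mathcal{W}^1$. It therefore suffices to show that every such $\omega$ vanishes identically.

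First I would establish that $|\omega|_g \to 0$ at infinity. Since $\mathcal{W}^1 \subseteq \RR\cdot d\theta_2$, and from \eqref{intro:metricexp} one computes $|d\theta_2|^2_{g^{\widehat{\fM}}_{\kappa_0,L}} = L^{-2}V^{-1} = L^{-2}\big(\kappa_0 + \tfrac{\nu}{2\pi}\log r\big)^{-1} \to 0$, the leading term $\omega_0$ has model norm tending to $0$. Combined with the error estimate $|\Phi^*\omega - \omega_0|_{g^{\widehat{\fM}}} = O(\fs^{-\epsilon})$ and the uniform equivalence of $\Phi^*g$ with $g^{\widehat{\fM}}_{\kappa_0,L}$ on the end, this gives $|\omega|_g \to 0$ as $s \to \infty$; in particular $|\omega|_g$ is bounded on $X$.

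Next I would apply the Bochner formula. Since $\omega$ is a harmonic $1$-form, $\nabla^*\nabla\omega = -\Ric(\omega)$, so
\[
\Delta\,|\omega|_g^2 \;=\; 2\,|\nabla\omega|_g^2 + 2\,\Ric(\omega,\omega) \;\ge\; 0
\]
by the assumption $\Ric_g \ge 0$; hence $|\omega|_g^2$ is subharmonic on $X$. Note that $\omega$ need not lie in $L^2$ — the volume of geodesic balls grows only quadratically while $|\omega|_g^2$ decays merely logarithmically — so I would not integrate this identity, and instead argue pointwise.

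Finally I would invoke the maximum principle on the complete, connected, non-compact manifold $X$: the function $f = |\omega|_g^2$ is continuous, nonnegative, subharmonic, and tends to $0$ at infinity, so $M := \sup_X f$ is finite; were $M > 0$, it would be attained at an interior point, and the strong maximum principle would force $f \equiv M$ on the (non-compact) connected component containing that point, contradicting $f \to 0$. Hence $f \equiv 0$, so $\omega \equiv 0$, $H^1_{dR}(X) = 0$, and $b^1(X) = 0$. I expect the only delicate step to be the second one — promoting the asymptotic normalization of Theorem~\ref{t:h1} to genuine pointwise decay of $|\omega|_g$ (using that the model norm of $d\theta_2$ decays and that the $O(\fs^{-\epsilon})$ error is measured in a norm comparable to $g$ on the end); the Bochner computation and the maximum-principle argument are routine.
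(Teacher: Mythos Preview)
Your proposal is correct and follows essentially the same route as the paper: invoke Theorem~\ref{t:h1}, observe that $|d\theta_2|_{g^{\fM}} = V^{-1/2} \to 0$ so any harmonic representative has $|\omega|_g \to 0$ at infinity, and then run Bochner plus the strong maximum principle exactly as in the proof of Corollary~\ref{t:non-existence}. One cosmetic point: the paper's convention is $\Delta = d\delta + \delta d$, so on functions $\Delta$ is the \emph{nonnegative} operator and the Bochner inequality reads $\Delta_g |\omega|_g^2 \le 0$ rather than $\ge 0$; your sign is the analyst's one, but the conclusion (subharmonicity of $|\omega|_g^2$) is the same.
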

We note that since any such manifold has quadratic volume growth at infinity, this corollary can also be seen to follow from Theorem 2.1 and Lemma 2.2 in  \cite{Anderson}. Anderson's results are much more general, but his proof uses techniques from geometric measure theory. In contrast, our proof of Corollary \ref{t:h10} is more elementary, but is specialized to $\ALG^*$ geometry. 

We next recall the following definition. 
\begin{definition}A hyperk\"ahler $4$-manifold $(X, g, I, J, K)$ is a Riemannian $4$-manifold $(X,g)$ with a triple of K\"ahler structures $(g, I), (g, J), (g, K)$ such that $IJ=K$. Let $\omega_1, \omega_2, \omega_3$ denote the K\"ahler forms for $I,J,K$, respectively.
\end{definition}
As we will see in Subsection~\ref{ss:HK}, the model space has a hyperk\"ahler structure, and we denote the triple of K\"ahler forms by $\omega_{i, \kappa_0, L}^{\widehat{\fM}}, i = 1,2,3$. In Subsection \ref{ss:quotients}, we consider all possible isometric quotients of the model space which retain this hyperk\"ahler structure. We show that, without loss of generality, we may assume that either $\Gamma$ is trivial, or $\nu$ is even and $\Gamma$ is a specific $\ZZ_2$-action $\iota: \widehat{\fM}_{2\nu}(R) \rightarrow  \widehat{\fM}_{2\nu}(R)$, up to hyperk\"ahler rotation and scaling.
In the latter case, denote the quotient space by 
$\fM_{2\nu}(R)=  \widehat{\fM}_{2\nu}(R)/\ZZ_2$. The model K\"ahler forms descend to the quotient, which we denote by $\omega_{i, \kappa_0, L}^{\fM}, i = 1,2,3$. 

Next, we will consider the case of complete hyperk\"ahler $\ALG^*_{\nu}$-$\Gamma$ manifolds. 
Since any hyperk\"ahler metric is Ricci-flat, by Corollary~\ref{t:non-existence}, $\Gamma$ cannot be trivial. 
Consequently, by the remarks in the previous paragraph, we can make the following definition. 

\begin{definition}
\label{d:ALGstarGI} An $\ALG^*_{\nu}$  gravitational instanton $(X,g,I,J,K)$  of order $\mathfrak{n} > 0$ with parameters 
$\nu \in \ZZ_+$, $\kappa_0 \in \RR$ and $L \in \RR_+$ is a hyperk\"ahler $4$-manifold
which is also an $\ALG^*_{2\nu}$-$\ZZ_2$ manifold $(X,g)$ with the $\ZZ_2$-action given by $\iota$, 
such that in addition to Definition~\ref{d:ALGstar},
the hyperk\"ahler forms satisfy
\begin{align}
\label{algstaromega}
 \big|\nabla_{g^{\fM}}^k(\Phi^*\omega_i- L^2 \omega_{i,\kappa_0,L}^{\fM})\big|_{g^{\fM}}=O(\fs^{-k-\mathfrak{n}}),
  \end{align}
for $i = 1,2,3$, as $\fs (x) \equiv r(x)V^{\frac{1}{2}}(x) \to \infty$, for any $k\in\dN_0$. \end{definition}
\begin{remark} The conditions \eqref{algstaromega} necessarily imply \eqref{ALGstarmetric}; see Subsection~\ref{s:aALGstar} below. 
\end{remark}

Our next main application is to determine the optimal order of $\ALG^*_{\nu}$ gravitational instantons. 
\begin{theorem}\label{t:sharp-order-intro} If $(X,g,I,J,K)$ is an $\ALG^*_{\nu}$ gravitational instanton of order $\mathfrak{n} > 0$ with respect to $\ALG^*_{\nu}$ coordinates $\Phi$, then there exists an $\ALG^*_{\nu}$ coordinate system $\Phi'$ as in Definition \ref{d:ALGstarGI} with order $\mathfrak{n}' = 2$. 
\end{theorem}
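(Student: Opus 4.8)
The plan is to improve the rate of decay of the difference $\Phi^*\omega_i - L^2\omega_{i,\kappa_0,L}^{\fM}$ from the given order $\mathfrak{n}$ to order $2$, by correcting the diffeomorphism $\Phi$ using the Fredholm theory for the Hodge Laplacian developed in Proposition~\ref{Weighted-analysis-X}. First I would set $\eta_i \equiv \Phi^*\omega_i - L^2\omega_{i,\kappa_0,L}^{\fM}$ for $i = 1,2,3$; these are closed $2$-forms on the end, decaying like $O(\fs^{-\mathfrak{n}})$. Since the model K\"ahler forms are parallel and the hyperk\"ahler triple on $X$ is also parallel, the $\eta_i$ satisfy algebraic self-duality-type constraints at leading order, and one can extract from them a symmetric $2$-tensor, essentially $\Phi^*g - g^{\fM}_{\kappa_0,L}$, whose leading asymptotics are governed by solutions of the linearized Einstein (or Hodge) equation on the model. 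The strategy is then the standard ``gauge-fixing/order-improvement'' argument: expand $\eta_i$ in terms of the eigenfunction/indicial-root data on the cross-section $\Nil^3_{\nu}$, and observe that all the indicial roots that could produce decay slower than $\fs^{-2}$ correspond to \emph{gauge} directions, i.e. they are $d$ of a decaying $1$-form or are pulled back from an infinitesimal change of the coordinate map $\Phi$. Absorbing those terms into a new diffeomorphism $\Phi'$ leaves a remainder decaying at order $2$.

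The key steps, in order, are: (1) using the parallel hyperk\"ahler structures, convert the $2$-form decay condition into a decay statement for $h \equiv \Phi^*g - g^{\fM}_{\kappa_0,L}$ as a symmetric $2$-tensor of order $\mathfrak{n}$, and reduce the closedness/coclosedness of the $\omega_i$ to a linear elliptic system for $h$ on the model end (this is where one uses that $d\omega_i = 0$ and that $\omega_i$ determines $g$). (2) Apply the weighted Fredholm theory (Proposition~\ref{Weighted-analysis-X}) and the explicit indicial-root computations from the appendix to list all the ``exceptional'' rates in the window $(2, \mathfrak{n}]$ (if $\mathfrak{n} \le 2$ there is nothing to prove, so assume $\mathfrak{n} > 2$); show that each such exceptional solution of the linearized equation is, after a hyperk\"ahler rotation, realized by the pullback of the model metric/K\"ahler forms under a diffeomorphism of the end of the form $\exp(V)$ for an explicit vector field $V$ with $|\nabla^k V| = O(\fs^{1-\text{(rate)}})$ — i.e. the only slowly-decaying solutions are pure gauge. (3) Compose $\Phi$ with this diffeomorphism to kill the leading term, and iterate finitely many times (the set of exceptional rates in any compact window is finite) until the remainder is $O(\fs^{-2-k})$ in all derivatives; finally check that the corrected map $\Phi'$ still satisfies \eqref{algstaromega} and \eqref{ALGstarmetric} with $\mathfrak{n}' = 2$, and that $2$ is not itself exceptional in the relevant sense so the argument terminates there.

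The main obstacle I expect is step (2): precisely identifying the indicial roots of the linearized operator on the ALG$^*$ end lying above $\fs^{-2}$ and matching each one with an honest gauge transformation. Unlike the ALE case, the ALG$^*$ cross-section is a nilmanifold with a collapsing $\theta_3$-circle and the warping factor $V = \kappa_0 + \frac{\nu}{2\pi}\log r$ grows logarithmically, so the model operator is not a pure cone operator and its indicial roots are not simply half-integers; separating variables produces ODEs in $r$ with $\log r$-type coefficients, and one must carefully track which solutions are polynomially bounded versus which carry $\log$ corrections. A secondary subtlety is that a priori $h$ need not satisfy a clean gauge condition, so one may first need to solve an auxiliary equation $\delta(\text{gauge term}) = \delta h$ in weighted spaces to put $h$ in a harmonic/divergence-free gauge before reading off its leading asymptotics; this is exactly the kind of solvability statement that Proposition~\ref{Weighted-analysis-X} is designed to provide. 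Once the exceptional rates are pinned down and shown to be gauge, the iteration and bookkeeping are routine.
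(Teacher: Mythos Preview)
Your proposal has the direction of the improvement backwards. You write ``if $\mathfrak{n}\le 2$ there is nothing to prove, so assume $\mathfrak{n}>2$'' and then look for exceptional rates in the window $(2,\mathfrak{n}]$. But order $\mathfrak{n}$ means the difference decays like $\fs^{-\mathfrak{n}}$, so larger $\mathfrak{n}$ is \emph{better}; if $\mathfrak{n}\ge 2$ the statement is trivial. The content of the theorem is exactly the opposite case: one starts with a possibly very small $\mathfrak{n}>0$ and must push the decay rate up past the integers $1$ and $2$. The paper does this by choosing an irrational $\epsilon\in(0,\mathfrak{n})$, gauge-fixing so that the difference lands in $W^{k,2}_{-2\epsilon}$, and iterating, doubling $\epsilon$ at each step.

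Even after correcting the direction, your claim that ``each exceptional solution is pure gauge'' is not how the argument works, and is in fact false at the critical rates. The paper does not pass to the symmetric $2$-tensor $h=\Phi^*g-g^{\fM}$ and the linearized Einstein operator; it works directly with the triple $\Phi^*\bm{\omega}-\bm{\omega}^{\fM}=\bm{\eta}_++\bm{\eta}_-$ split into self-dual and anti-self-dual parts with respect to $g^{\fM}$. The self-dual part is handled by the quadratic constraint $\mathcal{Q}(\bm{\omega})=0$: its linearization forces $\bm{\eta}_+$ to lie, modulo faster-decaying terms, in the rank-$4$ bundle $E$, and a section of $E$ is killed by $\mathcal{L}_Y$ for $Y=\mathcal{D}^*G\bm{\theta}$, using that $-\mathcal{D}\mathcal{D}^*$ is the scalar Laplacian. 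This is the only place a diffeomorphism is used. The anti-self-dual part $\bm{\eta}_-$ is \emph{not} gauged away: since $d\bm{\eta}_-=-d\bm{\eta}_+$ one gets $\Delta_{g^{\fM}}\bm{\eta}_-\in W^{k-2,2}_{-2\epsilon-2}$, and then Proposition~\ref{Weighted-analysis-X}(3)--(4) gives a harmonic expansion. At rate $1$ there are nonzero $\dT^2$-invariant ASD harmonic $2$-forms on $\widehat{\fM}$ (see \eqref{e:Harmonic-omega1}, \eqref{e:Harmonic-omega23-start}--\eqref{e:Harmonic-omega23-end}), and they are not gauge --- they are ruled out because none of them are invariant under the $\dZ_2$-action $\iota$, and by Definition~\ref{d:ALGstarGI} the end is the $\dZ_2$-quotient $\fM_{2\nu}$. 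This is the step your outline misses entirely; without it the iteration would stall at order $1$. At rate $2$ there is a genuine $\iota$-invariant ASD harmonic obstruction $\bm{\eta}_{\mathcal{Z}}$, and the argument terminates precisely because $|\nabla^k\bm{\eta}_{\mathcal{Z}}|=O(\fs^{-2-k})$, which is already the desired order.
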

This is proved as an application of our Fredholm Theory applied to a certain Dirac-type operator and for the Hodge Laplacian on $2$-forms; see Section \ref{s:aALGstar}.

\subsection{Acknowledgements} The authors would like to thank Hans-Joachim Hein, Rafe Mazzeo, and Song Sun for numerous helpful discussions. We also would like to thank the anonymous referee for their helpful questions and insightful remarks which greatly improved the exposition of the paper.

\section{ALG$^*$ model space}
\label{s:model-metric}
In this section, we explain some properties of $\ALG^*$ metrics in more detail.

\subsection{The model metric}
\label{ss:model-metric}
\renewcommand{\t}[1]{\theta_{#1}}
The $3$-dimensional Heisenberg group is
\begin{equation}
\label{e:3dh}
H(1,\dR) \equiv\left\{\begin{bmatrix}
1 & \t2 & \t3 \\
0 & 1 &  \t1 \\
0 & 0 & 1
\end{bmatrix}: \ \t1 ,\t2, \t3 \in\dR\right\}.
\end{equation}
For $\nu \in \dZ_+$,
the Heisenberg nilmanifold $\Nil^3_{\nu}$ of degree $\nu$
is the quotient of $H(1,\dR)$ by the left action  of the subgroup
\begin{equation}
H(1,\ZZ) \equiv\left\{\begin{bmatrix}
1 & 2 \pi k & 4 \pi^2 \nu^{-1} l \\
0 & 1 & 2 \pi m \\
0 & 0 & 1
\end{bmatrix}: \ k,l,m \in \ZZ\right\}
\label{subgroup}
\end{equation}
generated by
\begin{align}
\label{group1}
\sigma_1 (\theta_1,\theta_2 , \theta_3) &\equiv (\theta_1 + 2 \pi, \theta_2, \theta_3 ),\\
\label{group2}
\sigma_2(\theta_1,\theta_2 , \theta_3)&\equiv (\theta_1, \theta_2+ 2 \pi, \theta_3 + 2 \pi \theta_1),\\
\label{group3}
\sigma_3(\theta_1,\theta_2 , \theta_3)&\equiv ( \theta_1, \theta_2, \theta_3 +  4 \pi^2 \nu^{-1} ).
\end{align}
Note that the forms
\begin{align}\label{e:wtf}
d\theta_1, \quad d\theta_2, \quad \Theta \equiv \frac{\nu}{2\pi} (d \theta_3 - \theta_2 d\theta_1)
\end{align}
are a basis of left-invariant $1$-forms.  Also, it is clear that $\Nil^3_{\nu}$ is the total space of a degree $\nu$ circle fibration
\begin{align}
\label{cirfib}
  S^1 \longrightarrow \Nil^3_{\nu} \xrightarrow{\ \pi \ } T^2 \equiv \dR^2_{\theta_1,\theta_2}/\Lambda.
\end{align}
We next consider the Gibbons-Hawking ansatz 
\begin{align}
S^1 \to \widehat{\fM}_{\nu}(R) \equiv (R, \infty) \times \Nil^3_{\nu} \to \tilde{U} \equiv (\RR^2 \setminus \overline{B_{R}(0)}) \times S^1,
\end{align}
with the radial harmonic function 
\begin{align}V = \kappa_0 + \frac{\nu}{2\pi} \log r, \quad r\in(R,\infty),\ \kappa_0\in\dR, \ R>e^{\frac{2\pi}{\nu}(1-\kappa_0)}; \end{align}
for details of the Gibbons-Hawking ansatz construction see \cite{GibbonsHawking, HKLR}.
We use the coordinates $(x, y, \theta_2) = (r \cos (\theta_1), r \sin (\theta_1) ,\theta_2)$ on $(\RR^2 \setminus \overline{B_{R}(0^2)})\times S^1$  and fix the orientation $ r dr  \wedge d \theta_1 \wedge d \theta_2$.
Then we have 
\begin{align}
\label{e:stardV}
*_{\dR^2\times S^1}\circ d(V) =  \frac{\nu}{2\pi}  d \theta_1 \wedge d \theta_2,
\end{align}
and hence $\frac{1}{2\pi}[*_{\dR^2\times S^1}\circ dV] \in H^2(\tilde{U}; \dZ)$. Note that the form $\Theta$ is a connection form such that
$\Omega = d \Theta = * dV$. The Gibbons-Hawking metric is
\begin{align}
\label{metricexp}
g^{\widehat{\fM}}_{\kappa_0} = V ( dx^2 + dy^2 + d\theta_2^2) + V^{-1} \Theta^2 = V ( dr^2 + r^2 d\theta_1^2 + d \theta_2^2) + V^{-1} \frac{\nu^2}{4\pi^2} \Big( d \theta_3 - \theta_2 d \theta_1 \Big)^2.
\end{align}
If $\Gamma$ is some specified finite group acting freely and isometrically on $\widehat{\fM}_{\nu}(R)$, then we will denote the quotient space by ${\fM}_{\nu}(R) = \widehat{\fM}_{\nu}(R)/\Gamma$.

\begin{remark}
\label{r:scomp}
Choose a point $p_0\in \widehat{\fM}_{\nu}(R)$.
By straightforward computations, one can see that there exists a constant $C>0$ such that for any $q\in \widehat{\fM_{\nu}}(R)$,
\begin{align}
C^{-1}\cdot r(q) \cdot V(q)^{\frac{1}{2}}\leq d_{g^{\widehat{\fM}}_{\kappa_0}}	(q, p_0) \leq C \cdot r(q) \cdot V(q)^{\frac{1}{2}}.
\end{align}
\end{remark}
\subsection{Hyperk\"ahler structure}
\label{ss:HK}
On $\widehat{\fM}_{\nu}(R)$, define an orthonormal basis
\begin{align}
\{ E^1, E^2, E^3, E^4 \} = \{ V^{1/2} dx, V^{1/2} dy , V^{1/2} d\t2, V^{-1/2} \Theta\}.
\end{align}
We define $3$ almost complex structures $I, J, K \in C^\infty(\End(T \widehat{\fM}))$ on $\widehat{\fM}_{\nu}(R)$ by requiring the dual linear maps $I^*, J^*, K^* \in C^\infty(\End(T^* \widehat{\fM}))$ satisfying
\begin{align}
I^* ( E^1 ) &= - E^2, \quad I^* (E^3) = - E^4, \\
J^* ( E^1) &= - E^3, \quad J^*(E^2) = E^4, \\
K^* ( E^1) &= - E^4, \quad K^*(E^2) = - E^3.
\end{align}
It is clear that each complex structure is Hermitian with respect to $g$. Moreover, $K=IJ$ because $K^*=J^*I^*$.
Using the convention that $\omega_J(X,Y) = g (JX,Y)$, the corresponding
K\"ahler forms are
\begin{align}
\label{mkf1}
\omega_I&=\omega_{1, \kappa_0}^{\widehat{\fM}}  = E^1 \wedge E^2 + E^3 \wedge E^4= V dx \wedge dy + d \t2 \wedge \Theta\\
\label{mkf2}
\omega_J&= \omega_{2, \kappa_0}^{\widehat{\fM}} = E^1 \wedge E^3 - E^2 \wedge E^4 = V dx \wedge d \t2 - dy \wedge \Theta\\
\label{mkf3}
\omega_K&=\omega_{3, \kappa_0}^{\widehat{\fM}} = E^1 \wedge E^4 + E^2 \wedge E^3 = dx \wedge \Theta + V dy \wedge d \t2.
\end{align}
We notice that $d\omega_I = d\omega_J = d\omega_K = 0$. For example, 
\begin{align}
d\omega_I = dV \wedge *d\theta_2 - d\theta_2 \wedge d\Theta = dV \wedge *d\theta_2 - d\theta_2 \wedge *dV = 0,
\end{align}
where $*\equiv *_{\dR^2\times S^1}$ is the Hodge star operator defined with respect to the flat metric on $\dR^2\times S^1$. The last equality holds since $* \alpha \wedge \beta = \alpha \wedge * \beta$ for $1$-forms $\alpha,\beta\in\Omega^1(\mathbb{R}^2\times S^1)$.
The computations for $d\omega_J$ and $d\omega_K$ are similar.

By \cite[Lemma~6.8]{Hitchin}, the triple $ \{ \omega_{1, \kappa_0}^{\widehat{\fM}}, \omega_{2, \kappa_0}^{\widehat{\fM}}, \omega_{3,\kappa_0}^{\widehat{\fM}} \} \equiv \{ \omega_I, \omega_J, \omega_K \}$
 is a hyperk\"ahler triple on $\widehat{\fM}_{\nu}(R)$.  In particular, the complex structures $I,J,K$ are integrable, and the metric is Ricci-flat K\"ahler with respect to all three of these complex structures.

\subsection{Quotients}
\label{ss:quotients}
There are many possibilities for the group $\Gamma$; see for example \cite{ChuShin}.  We will not analyze all the possibilities here, but will only address the question of which quotients retain the hyperk\"ahler structure. To this end, we have the following proposition. 
\begin{proposition} 
\label{p:groups}
If $\Gamma$ is a finite group acting freely and isometrically on 
$(\widehat{\fM}_{\nu}(R), g_{\kappa_0}^{\widehat{\fM}})$ and preserving $I, J, K$, then either $\Gamma$ is generated by $\xi_l$ and $\zeta_{k, l, m}$ for integers $k, l \in \dN ,m \in \dN_0$ such that $k$ divides $\nu$ and $0 \leq m \leq kl-1$,   or $\nu$ is even and $\Gamma$ is generated by $\xi_l$, $\zeta_{k, l, m}$, $\iota_{n, t}$ for integers $k,l,m$ satisfying the same conditions, $n \in \dN_0$ satisfying $0 \leq n \leq \nu - 1$ with $nl$ even,   and $t \in \RR$.
\end{proposition}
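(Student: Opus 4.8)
The plan is to reduce the statement to two purely group-theoretic questions about the model $(\widehat{\fM}_{\nu}(R),g^{\widehat{\fM}}_{\kappa_0})$: (a) a description of the group $G$ of all isometries fixing each of $\omega_I,\omega_J,\omega_K$ (the \emph{triholomorphic} isometries); and (b) a determination of which finite subgroups of $G$ act freely. Since $L$ rescales the metric and all three K\"ahler forms by $L^2$ and $\kappa_0$ is immaterial here, we fix them; then any $\Gamma$ as in the statement is precisely a finite subgroup of $G$ acting freely, so (a) and (b) together finish the proof.

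For (a), the crucial first step is that every $\phi\in G$ preserves the $\theta_3$-fibration $\Nil^{3}_{\nu}\to T^{2}$. This fibration is the orbit foliation of the centre of $\pi_1(\widehat{\fM}_{\nu}(R))\cong H(1,\ZZ)$ (generated by $\sigma_3$), and by \eqref{metricexp} its fibres are loops of length $2\pi V^{-1/2}\to 0$ as $r\to\infty$, which for $r$ large are the shortest noncontractible loops (the $\theta_1$- and $\theta_2$-directions have lengths $\sim rV^{1/2}$ and $\sim V^{1/2}$); hence any isometry carries them to themselves. Thus $\phi$ descends to an isometry $\bar\phi$ of the base $\tilde U=(\RR^{2}\setminus\overline{B_{R}})\times S^{1}_{\theta_{2}}$ which preserves the potential $V$ (up to a constant, the reciprocal of the squared fibre length), hence preserves $r$; and since an isometry of $\tilde U$ fixing $r$ must act on each flat $2$-torus $\{r=r_0\}$ by an isometry that cannot mix its two incommensurable circle factors, $\bar\phi$ is of the form $\theta_{1}\mapsto\pm\theta_{1}+a$, $\theta_{2}\mapsto\pm\theta_{2}+c$ with $r$ fixed. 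Since $\ker\Theta$ is the metric-orthogonal complement of the fibre, $\phi^{*}\Theta=\pm\Theta$. Imposing $\phi^{*}\omega_I=\omega_I$ and $\phi^{*}(\omega_{J}+\i\omega_{K})=\omega_{J}+\i\omega_{K}$ — using \eqref{mkf1} and the identity $\omega_{J}+\i\omega_{K}=d(x+\i y)\wedge(V\,d\theta_{2}+\i\Theta)$ — one checks directly that a reflection $\theta_{1}\mapsto-\theta_{1}+a$ is impossible, that $\theta_{2}\mapsto\theta_{2}+c$ forces $a\in2\pi\ZZ$ and $\phi=\phi_{c,b}\colon(\theta_{1},\theta_{2},\theta_{3})\mapsto(\theta_{1},\theta_{2}+c,\theta_{3}+c\theta_{1}+b)$, and that $\theta_{2}\mapsto-\theta_{2}+c$ forces $a\equiv\pi$ and $\phi\colon(\theta_{1},\theta_{2},\theta_{3})\mapsto(\theta_{1}+\pi,-\theta_{2}+c,-\theta_{3}+c\theta_{1}+t)$ (``$\iota$-type''); the $\theta_3$-component in each case is forced by $\phi^{*}\Theta=\pm\Theta$ up to the fibre-rotation freedom in $b$, resp.\ $t$.

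Next I would impose that such a map descend to $\widehat{\fM}_{\nu}(R)$, i.e.\ normalize $H(1,\ZZ)=\langle\sigma_{1},\sigma_{2},\sigma_{3}\rangle$; since the pure $\theta_{3}$-translations in $H(1,\ZZ)$ form $\tfrac{4\pi^{2}}{\nu}\ZZ$, computing the conjugates $\phi\sigma_i\phi^{-1}$ shows $\phi_{c,b}$ descends iff $c\in\tfrac{2\pi}{\nu}\ZZ$, and an $\iota$-type map descends iff $c\in\tfrac{2\pi}{\nu}\ZZ$ and $\nu$ is even (its $\sigma_2$-conjugate differs from $\sigma_2^{-1}$ by a $\theta_3$-translation by $2\pi^{2}$, which lies in $H(1,\ZZ)$ exactly when $\tfrac{\nu}{2}\in\ZZ$). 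Consequently the descending holomorphic translations form an abelian group $\cong S^{1}\times\ZZ_{\nu}$ — the $S^1$ being fibre rotations, the $\ZZ_\nu$ the admissible $\theta_2$-shifts — the $\iota$-type maps (when $\nu$ is even) form a single nonidentity coset, and $G\cong(S^{1}\times\ZZ_{\nu})\rtimes\ZZ_{2}$ for $\nu$ even, $G\cong S^{1}\times\ZZ_{\nu}$ otherwise. For (b): a nontrivial holomorphic translation either moves $\theta_{2}$ or is a nontrivial fibre rotation, and both act freely on $\Nil^{3}_{\nu}$ (the rotation $\theta_{3}\mapsto\theta_{3}+t$ is free unless $t\in\tfrac{4\pi^{2}}{\nu}\ZZ$, when it is the identity), while every element outside $S^{1}\times\ZZ_{\nu}$ shifts $\theta_{1}$ by $\pi$ and so is free; hence \emph{all} finite subgroups of $G$ act freely. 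It then remains to run the standard classification of finite subgroups of $S^{1}\times\ZZ_{\nu}$: each is $\langle\xi_{l},\zeta_{k,l,m}\rangle$ with $k\mid\nu$ (forced, since the order-$k$ admissible $\theta_2$-shift requires $k\mid\nu$) and $0\le m\le kl-1$ ($\xi_l$ generating the order-$l$ fibre-rotation subgroup and $\zeta_{k,l,m}$ the order-$k$ $\theta_2$-shift with $\theta_3$-component indexed by $m$ modulo $\langle\xi_l\rangle$); and a finite subgroup not contained in $S^{1}\times\ZZ_{\nu}$ forces $\nu$ even and equals $\langle\xi_{l},\zeta_{k,l,m},\iota_{n,t}\rangle$ for such $k,l,m$, some $0\le n\le\nu-1$, and $t\in\RR$, the closure conditions (in particular $\iota_{n,t}^{2}=[\theta_{3}\mapsto\theta_{3}+\tfrac{2\pi^{2}n}{\nu}]\in\langle\xi_{l},\zeta_{k,l,m}\rangle$) reducing exactly to $nl$ even. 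This is the asserted dichotomy.

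The conceptual crux is the first step of (a) — that triholomorphicity (indeed, already being an isometry) forces respect for the distinguished $\theta_3$-fibration, which rests on the uniqueness of the collapsing direction at infinity in the explicit metric \eqref{metricexp}. Everything afterwards is a careful but essentially mechanical analysis of how the two families $\phi_{c,b}$ and the $\iota$-type maps interact with the Heisenberg lattice $H(1,\ZZ)$ and with one another; the $\iota$-type case is the most delicate, and it is there — and only there — that the parity conditions ``$\nu$ even'' and ``$nl$ even'' enter.
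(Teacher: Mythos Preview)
Your argument is correct and follows essentially the same route as the paper: both reduce to the fact that a triholomorphic isometry must preserve the $\theta_3$-fibration, descend to an isometry of the base preserving $r$, and then use invariance of $\omega_J+\i\omega_K$ to pin the base action to either the identity or the involution $(dx,dy,d\theta_2,\Theta)\mapsto(-dx,-dy,-d\theta_2,-\Theta)$; the lift to $\widehat{\fM}_\nu$ and the $H(1,\ZZ)$-normalization conditions then produce exactly the families $\zeta_{k,l,m}$ and $\iota_{n,t}$ with the stated constraints. Your packaging differs only in that you first compute the full triholomorphic isometry group $G\cong S^1\times\ZZ_\nu$ (or its $\ZZ_2$-extension) and then classify its finite subgroups, whereas the paper works directly with a given finite $\Gamma$; you also supply an explicit justification for fibre-preservation (shortest noncontractible loops) where the paper simply asserts this as a property of Gibbons--Hawking metrics, and you observe that freeness is automatic, which the paper takes as a hypothesis.
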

\begin{proof}
Define
\begin{align}
\label{e:xil}
\xi_l(\t1, \t2, \t3) &\equiv (\t1, \t2, \t3 + 4\pi^2 l^{-1}\nu^{-1}),\\
\label{e:zetak}
\zeta_{k, l, m}(\t1, \t2, \t3) &\equiv (\t1, \t2 + 2\pi k^{-1}, \t3 + 2\pi k^{-1}\t1 + 4\pi^2 m l^{-1} k^{-1} \nu^{-1}),\\
\iota_{n, t}(\t1, \t2, \t3) &\equiv (\t1 + \pi, 2 \pi n \nu^{-1} -\t2, 2 \pi n \nu^{-1} \t1 + t -\t3).
\end{align}
Then
\begin{align}
\xi_l \sigma_1 \xi_l^{-1} &=\sigma_1, \quad \xi_l \sigma_2 \xi_l^{-1} =\sigma_2, \quad \xi_l \sigma_3 \xi_l^{-1} =\sigma_3,\\
\zeta_{k, l, m} \sigma_1 \zeta_{k, l, m}^{-1} &= \sigma_1 \sigma_3^{\nu/k}, \quad \zeta_{k, l, m} \sigma_2 \zeta_{k, l, m}^{-1} = \sigma_2, \quad \zeta_{k, l, m} \sigma_3 \zeta_{k, l, m}^{-1} = \sigma_3,\\
\iota_{n, t} \sigma_1 \iota_{n, t}^{-1}&=\sigma_1\sigma_3^{n}, \quad \iota_{n, t} \sigma_2 \iota_{n, t}^{-1}=\sigma_3^{\nu/2}\sigma_2^{-1}, \quad \iota_{n, t} \sigma_3 \iota_{n, t}^{-1}=\sigma_3^{-1}.
\end{align}
These imply that $\xi_l$, $\zeta_{k,l,m}$, $\iota_{n,t}$ descend to actions on $H(1,\RR) / H(1,\ZZ)$. Moreover, it is easy to see that they induce actions on $\widehat{\fM}$ which fix $(g_{\kappa_0}^{\widehat{\fM}}, \omega_{1, \kappa_0}^{\widehat{\fM}}, \omega_{2, \kappa_0}^{\widehat{\fM}}, \omega_{3,\kappa_0}^{\widehat{\fM}})$. Note that 
\begin{align}
\xi_l^l =\sigma_3, \quad \zeta_{k, l, m}^{k} = \xi_l^m \sigma_2, \quad \iota_{n, t}^2=\xi_l^{nl/2}\sigma_1.
\end{align}
A routine calculation shows that they generate a finite subgroup.

Conversely, an isometry of the Gibbons-Hawking metric must map $S^1$-fibers to $S^1$-fibers, so there is a homomorphism $h$ from $\Gamma$ to the isometry group of $ (\RR^2 \setminus B_R(0^2)) \times S^1$. We first consider the kernel of $h$. Since $\Gamma$ is a finite group, we see that the kernel of $h$ is generated by $\xi_l$, for some $l \in \dN$. For the image of $h$,  the isometries of the base are rotations and reflections in $\RR^2$ and similarly on~$S^1$.  Using \eqref{mkf2} and \eqref{mkf3}, we see that $\omega_J$ and $\omega_K$ are invariant only if 
\begin{align}
\label{case1}
(dx, dy, d \theta_2, \Theta) &\mapsto (dx, dy, d \theta_2, \Theta), \mbox{ or}\\
\label{case2}
(dx, dy, d \theta_2, \Theta) &\mapsto (-dx, -dy, -d \theta_2, -\Theta).
\end{align}
Consider the case that every element in $\Gamma$ satisfies \eqref{case1}, the only possibility is that $h(\Gamma)$ is generated by 
\begin{align}
h(\gamma): (r, \t1, \t2) &\mapsto (r, \t1, \t2 + 2\pi a),
\end{align}
where $a \in \RR$. Since $\Gamma$ is a finite group, we can assume that $a=1/k$, where $k \in \dN$. Using the condition that $\Theta$ is fixed under the action, we see that
\begin{align}
\gamma (r, \t1, \t2, \t3) = (r, \t1, \t2 + 2\pi k^{-1}, \t3 + 2\pi k^{-1} \t1 + b),
\end{align}
where $b \in \RR$.
Since $\gamma \sigma_1 \gamma^{-1}$ is in $H(1, \ZZ)$, we see that $k$ divides $\nu$. Moreover, since $\gamma^k$ is in the kernel of $h$, it must be $\xi_l^m \sigma_2$, where $m \in \ZZ$. This implies that $b = 4\pi^2 m l^{-1} k^{-1} \nu^{-1}$. By multiplying with $\sigma_3$, we can assume that $0 \leq m \leq kl-1$.

Next, consider that case that \eqref{case2} happens for some element $\gamma \in \Gamma$. 
Then 
\begin{align}
\gamma (r, \t1, \t2, \t3) = (r, \t1 + \pi, c - \t2, c \t1 + t - \t3),
\end{align}
where $c, t \in \RR$. Using $\gamma \sigma_1 \gamma^{-1}\in H(1,\ZZ)$, we see that $c= 2\pi n \nu^{-1}$ for $n \in \ZZ$. By multiplying with $\sigma_2$, we can assume that $0 \leq n \leq \nu-1$. Using $\gamma \sigma_2 \gamma^{-1} \in H(1,\ZZ)$, we see that $\nu$ is even. Finally, since $\gamma^2 =\xi_l^{nl/2}\sigma_1$ is in the kernel of $h$, we see that $nl$ is also even.
\end{proof}
\begin{remark} 
\label{r:groups}
Note that in the first case in Proposition \ref{p:groups}, $h(\Gamma)$ is a cyclic group $\ZZ_k$ consisting of rotations of the $S^1$ factor. The kernel of $h$ is also a cyclic group $\ZZ_l$. There is a short exact sequence
\begin{equation}
\begin{tikzcd}
\label{cd1}
0 \arrow[r]  & \ZZ_l   \arrow[r] &  \Gamma \arrow[r, "h"] & \ZZ_k  \arrow[r] & 0,
\end{tikzcd}
\end{equation}
so we must have $\Gamma = \ZZ_l \rtimes \ZZ_k$, and since $\Gamma$ is abelian, we must have $\Gamma = \ZZ_l \times \ZZ_k$. In the second case, $h(\Gamma)$ is a dihedral group $D_{k} = \ZZ_{k} \rtimes \ZZ_2$, and we similarly conclude that 
$\Gamma = \ZZ_l \rtimes D_{k}$. In fact, it is not hard to see from the above presentation that $\Gamma = (\ZZ_l \times \ZZ_{k}) \rtimes \ZZ_2$.
\end{remark} 

\begin{remark} Recall that in Definition~\ref{d:ALG*-model}, we defined 
the scaled metric $g^{\widehat{\fM}}_{\kappa_0, L} = L^2 g^{\widehat{\fM}}_{\kappa_0}$.
The complex structures of course do not depend on any scaling, but 
we also define the rescaled K\"ahler forms 
$\omega^{\widehat{\fM}}_{i,\kappa_0, L} \equiv L^2 \omega^{\widehat{\fM}}_{i,\kappa_0}$ for $i = 1, 2, 3$.
\end{remark}

The next proposition deals with the first case above.  
\begin{proposition} In the first case in Proposition~\ref{p:groups}, 
the quotient space 
\begin{align}
\label{e:qs}
(\widehat{\fM}_{\nu}(R), g_{\kappa_0}^{\widehat{\fM}}, \omega_{1, \kappa_0}^{\widehat{\fM}}, \omega_{2, \kappa_0}^{\widehat{\fM}}, \omega_{3,\kappa_0}^{\widehat{\fM}})/\Gamma
\end{align}
can be identified with $(\widehat{\fM}_{\tilde{\nu}}(\tilde{R}), g_{\tilde{\kappa}_0,L}^{\widehat{\fM}}, \omega_{1, \tilde{\kappa}_0,L}^{\widehat{\fM}}, \omega_{2, \tilde{\kappa}_0,L}^{\widehat{\fM}}, \omega_{3,\tilde{\kappa}_0,L}^{\widehat{\fM}})$ after a hyperk\"ahler rotation, for some $\tilde{\nu}, \tilde{R}, \tilde{\kappa}_0$, and $L$. 
\label{t:no-cylic-quotient}
\end{proposition}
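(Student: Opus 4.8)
The plan is to exhibit an explicit change of coordinates under which the quotient metric is again in the Gibbons--Hawking normal form of \eqref{metricexp}, and then to track how the hyperk\"ahler triple \eqref{mkf1}--\eqref{mkf3} transforms. By Proposition~\ref{p:groups}, in the first case $\Gamma$ is generated by $\xi_l$ and $\zeta_{k,l,m}$ with $k\mid\nu$ and $0\le m\le kl-1$. Since $\xi_l^l=\sigma_3$ and $\zeta_{k,l,m}^k=\xi_l^m\sigma_2$, the deck group of $\widehat{\fM}_{\nu}(R)/\Gamma$ over $(R,\infty)\times H(1,\RR)$ is generated by $\sigma_1$, $\xi_l$, and $\zeta_{k,l,m}$. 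I would set $\tilde\nu:=l\nu/k$, which is a positive integer \emph{precisely because} $k\mid\nu$, and $c_0:=2\pi m(l\nu)^{-1}$.

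First I would introduce the coordinates $\tilde r=kr$, $\tilde\theta_1=\theta_1+c_0$, $\tilde\theta_2=k\theta_2$, $\tilde\theta_3=k\theta_3$, and check that this carries $\sigma_1,\xi_l,\zeta_{k,l,m}$ to the standard generators $\sigma_1,\sigma_3,\sigma_2$ of the degree-$\tilde\nu$ Heisenberg lattice $H(1,\ZZ)$: the centre relation holds because $4\pi^2 k(l\nu)^{-1}=4\pi^2\tilde\nu^{-1}$, and the constant $4\pi^2 m(lk\nu)^{-1}$ in the third component of $\zeta_{k,l,m}$ is cancelled by precisely the shift $c_0$ in $\tilde\theta_1$, so that $\zeta_{k,l,m}$ is sent to the offset-free generator $\sigma_2$ of $\Nil^3_{\tilde\nu}$. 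Hence the map descends to a diffeomorphism $\Phi\colon\widehat{\fM}_{\nu}(R)/\Gamma\to\widehat{\fM}_{\tilde\nu}(\tilde R)$ with $\tilde R=kR$ (after enlarging $R$ at the outset, if necessary, so that $\tilde R$ lies in the admissible range of Definition~\ref{d:ALG*-model}).

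Next I would substitute this into \eqref{metricexp}. Writing $V=\kappa_0+\tfrac{\nu}{2\pi}\log r=\tfrac{k}{l}\tilde V$ with $\tilde V:=\tilde\kappa_0+\tfrac{\tilde\nu}{2\pi}\log\tilde r$ and $\tilde\kappa_0:=\tfrac{l}{k}\big(\kappa_0-\tfrac{\nu}{2\pi}\log k\big)$, and noting $\Theta=l^{-1}\tilde\Theta$ with $\tilde\Theta:=\tfrac{\tilde\nu}{2\pi}(d\tilde\theta_3-\tilde\theta_2\,d\tilde\theta_1)$, one finds $\Phi_*g^{\widehat{\fM}}_{\kappa_0}=g^{\widehat{\fM}}_{\tilde\kappa_0,L}$ with $L=(kl)^{-1/2}$; the key point is that the $\RR^2$ block, the $d\tilde\theta_2^2$ term, and the fibre term all force the same value $L^2=(kl)^{-1}$ (the fibre term via $\tilde\nu^2=l^2\nu^2/k^2$). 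For the K\"ahler forms, set $\tilde x=\tilde r\cos\tilde\theta_1$, $\tilde y=\tilde r\sin\tilde\theta_1$. Because $\omega_I$ involves only the rotation-invariant form $V\,dx\wedge dy=\tfrac1{kl}\tilde V\,d\tilde x\wedge d\tilde y$, one gets $\Phi_*\omega_{1,\kappa_0}^{\widehat{\fM}}=\omega_{1,\tilde\kappa_0,L}^{\widehat{\fM}}$ at once; but $dx$ and $dy$ enter \eqref{mkf2}--\eqref{mkf3} individually, so the rotation $\theta_1\mapsto\theta_1+c_0$ of the base $\RR^2$ yields
\[
\Phi_*\omega_{2,\kappa_0}^{\widehat{\fM}}=\cos c_0\,\omega_{2,\tilde\kappa_0,L}^{\widehat{\fM}}+\sin c_0\,\omega_{3,\tilde\kappa_0,L}^{\widehat{\fM}},\qquad \Phi_*\omega_{3,\kappa_0}^{\widehat{\fM}}=-\sin c_0\,\omega_{2,\tilde\kappa_0,L}^{\widehat{\fM}}+\cos c_0\,\omega_{3,\tilde\kappa_0,L}^{\widehat{\fM}}.
\]
Thus, after rotating the hyperk\"ahler structure of $\widehat{\fM}_{\tilde\nu}(\tilde R)$ by $-c_0$ in the $(\omega_J,\omega_K)$-plane (a hyperk\"ahler rotation, fixing $g^{\widehat{\fM}}$ and $\omega_I$), $\Phi$ identifies the two hyperk\"ahler triples, as asserted.

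I expect the main obstacle to be organizational rather than conceptual: one has to arrange the rescalings of $r,\theta_2,\theta_3$ so that all three blocks of the Gibbons--Hawking metric reach standard form simultaneously --- this is what pins down $L=(kl)^{-1/2}$ and forces $\tilde\nu=l\nu/k$ to be an integer, so the divisibility $k\mid\nu$ from Proposition~\ref{p:groups} is used exactly here --- and one has to notice that the residual twist constant $4\pi^2 m(lk\nu)^{-1}$ of $\zeta_{k,l,m}$ cannot be removed by any shift of the fibre coordinate $\theta_3$, only by a rotation of the base $\RR^2$; that rotation is precisely the hyperk\"ahler rotation appearing in the statement.
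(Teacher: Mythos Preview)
Your proof is correct and follows essentially the same approach as the paper: the same change of coordinates $\tilde r=kr$, $\tilde\theta_1=\theta_1+2\pi m(l\nu)^{-1}$, $\tilde\theta_2=k\theta_2$, $\tilde\theta_3=k\theta_3$, yielding the same parameters $\tilde\nu=l\nu/k$, $L^2=(kl)^{-1}$, and the same hyperk\"ahler rotation by the angle $c_0=2\pi m(l\nu)^{-1}$ in the $(\omega_J,\omega_K)$-plane. Your version is in fact slightly more thorough than the paper's, since you verify explicitly that the generators $\sigma_1,\xi_l,\zeta_{k,l,m}$ are carried to the standard lattice generators of $\Nil^3_{\tilde\nu}$ and record the value of $\tilde\kappa_0$.
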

\begin{proof} Letting $\tilde{\theta}_1 = \t1 + 2\pi m l^{-1}\nu^{-1}$, $\tilde{\theta}_2 = k \t2$ and $\tilde{\theta}_3 = k \t3$, the
Gibbons-Hawking metric becomes
\begin{align}
\begin{split}
\label{metricexp2}
g^{\widehat{\fM}}_{\kappa_0} &= V ( dr^2 + r^2 d\tilde{\theta}_1^2 + k^{-2} d \tilde{\theta}_2^2) + V^{-1} \frac{\nu^2}{4\pi^2k^2}(  d \tilde{\theta}_3 - \tilde{\theta}_2  d \tilde{\theta}_1 )^2\\
&= k^{-1} l^{-1} ( V k^{-1} l) \Big( k^2(dr^2 + r^2 d \tilde{\theta}_1^2) + d \tilde{\theta}_2^2 \Big) + k^{-1}l^{-1}(V k^{-1} l)^{-1} \frac{(\nu k^{-1} l)^2}{4\pi^2}(  d \tilde{\theta}_3 - \tilde{\theta}_2 d \tilde{\theta}_1)^2.
\end{split}
\end{align}
Letting $\tilde{V} = V k^{-1} l$, $\tilde{r} = k r$, $\tilde{\nu} = \nu k^{-1} l$, we have 
\begin{align}
\begin{split}
\label{metricexp3}
g^{\widehat{\fM}}_{\kappa_0}  &= k^{-1}l^{-1}\Big\{  \tilde{V} ( d\tilde{r}^2 + \tilde{r}^2 d\tilde{\theta}_1^2 + d \tilde{\theta}_2^2) + \tilde{V}^{-1} \frac{\tilde{\nu}^2}{4\pi^2} (d \tilde{\theta}_3 - \tilde{\theta}_2 d \tilde{\theta}_1)^2 \Big\}.
\end{split}
\end{align}
A similar calculation shows that
\begin{align}
\omega_I = k^{-1}l^{-1} \Big\{
\tilde{V} \tilde{r} d \tilde{r} \wedge d\tilde{\theta}_1 + \frac{\tilde{\nu}}{2 \pi} d \tilde{\theta}_2 \wedge ( d \tilde{\theta}_3 - \tilde{\theta}_2 d \tilde{\theta}_1) \Big\}.
\end{align}
We also see that 
\begin{align}
\omega_J &= k^{-1}l^{-1} \big( \cos(q) \tilde{\omega}_J - \sin(q) \tilde{\omega}_K\big),\\
\omega_K &= k^{-1} l^{-1}  \big( \sin(q) \tilde{\omega}_J + \cos(q) \tilde{\omega}_K\big),
\end{align}
for $q = - 2\pi m l^{-1} \nu^{-1}$, where $\tilde{\omega}_J$ and $\tilde{\omega}_K$ are defined as in \eqref{mkf2}-\eqref{mkf3}, but with respect to the $(\tilde{r}, \tilde{\theta}_1, \tilde{\theta}_2, \tilde{\theta}_3)$-coordinates. This finishes the proof. 
\end{proof}
The next proposition deals with the second case above. 
\begin{proposition}
In the second case in Proposition~\ref{p:groups}, 
the quotient space \eqref{e:qs}
can be identified with 
\begin{align}
(\widehat{\fM}_{\tilde{\nu}}(\tilde{R}), g_{\tilde{\kappa}_0,L}^{\widehat{\fM}}, \omega_{1, \tilde{\kappa}_0,L}^{\widehat{\fM}}, \omega_{2, \tilde{\kappa}_0,L}^{\widehat{\fM}}, \omega_{3,\tilde{\kappa}_0,L}^{\widehat{\fM}})/ \iota
\end{align}
after a hyperk\"ahler rotation, for some $\tilde{\nu}, \tilde{R}, \tilde{\kappa}_0, L$, where $\tilde{\nu}$ is even and $\iota = \iota_{0,0}$.
\end{proposition}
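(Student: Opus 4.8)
The strategy is to peel off the orientation‑reversing part of $\Gamma$ as a residual involution and reduce to the first case already handled in Proposition~\ref{t:no-cylic-quotient}. By Proposition~\ref{p:groups}, in the second case $\Gamma$ is generated by $\xi_l$, $\zeta_{k,l,m}$ and $\iota_{n,t}$ with the stated constraints. Assigning to $\gamma\in\Gamma$ the sign $+1$ or $-1$ according to whether it satisfies \eqref{case1} or \eqref{case2} defines a homomorphism $\Gamma\to\ZZ_2$; its kernel is the index‑two (hence normal) subgroup $\Gamma_0=\langle\xi_l,\zeta_{k,l,m}\rangle$, which is precisely of the shape treated in the first case of Proposition~\ref{p:groups} (note $\xi_l,\zeta_{k,l,m}$ satisfy \eqref{case1} while $\iota_{n,t}$, which sends $d\theta_2\mapsto-d\theta_2$, satisfies \eqref{case2}). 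Consequently
\[
\widehat{\fM}_\nu(R)/\Gamma \;=\; \big(\widehat{\fM}_\nu(R)/\Gamma_0\big)\big/\langle\bar\iota\rangle ,
\]
where $\bar\iota$ is the involution induced by $\iota_{n,t}$ on the quotient by $\Gamma_0$.

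First I would apply Proposition~\ref{t:no-cylic-quotient} to $\Gamma_0$: after a hyperk\"ahler rotation and a rescaling with $L^2=(kl)^{-1}$, the quotient $\widehat{\fM}_\nu(R)/\Gamma_0$ is identified with $(\widehat{\fM}_{\tilde\nu}(\tilde R),g^{\widehat{\fM}}_{\tilde\kappa_0,L},\omega^{\widehat{\fM}}_{i,\tilde\kappa_0,L})$, where $\tilde\nu=\nu k^{-1}l$, via the explicit substitution $\tilde\theta_1=\theta_1+2\pi ml^{-1}\nu^{-1}$, $\tilde\theta_2=k\theta_2$, $\tilde\theta_3=k\theta_3$, $\tilde r=kr$ used there. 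Plugging this substitution into the formula for $\iota_{n,t}$ expresses $\bar\iota$, in the $\tilde\theta$‑coordinates, as a map of the same type, $\bar\iota=\iota_{\tilde n,\tilde t}$, with $\tilde n=nl$ and $\tilde t=kt-4\pi^2 nmkl^{-1}\nu^{-2}$. Two points should be recorded here. First, $\tilde n=nl$ is \emph{even}, by the standing hypothesis of the second case. Second, $\tilde\nu$ is \emph{even}: indeed $\bar\iota$ must descend to a well‑defined isometry of $\Nil^3_{\tilde\nu}$, and the conjugation identity $\bar\iota\,\sigma_2\,\bar\iota^{-1}=\sigma_3^{\tilde\nu/2}\sigma_2^{-1}$ (computed exactly as in the proof of Proposition~\ref{p:groups}) can lie in the defining lattice of $\Nil^3_{\tilde\nu}$ only when $\tilde\nu/2\in\ZZ$.

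To finish, I would conjugate $\iota_{\tilde n,\tilde t}$ into $\iota_{0,0}$. Consider the diffeomorphism $\psi$ of $\widehat{\fM}_{\tilde\nu}(\tilde R)$ fixing $\tilde r$ and $\tilde\theta_1$ and acting by $\tilde\theta_2\mapsto\tilde\theta_2+a$, $\tilde\theta_3\mapsto\tilde\theta_3+a\tilde\theta_1+c$ with $a=-\pi\tilde n\tilde\nu^{-1}$ and $c=\tfrac12(\pi^2\tilde n\tilde\nu^{-1}-\tilde t)$. Because $\tilde n$ is even, $a\in2\pi\tilde\nu^{-1}\ZZ$, so $\psi$ normalizes the defining lattice of $\Nil^3_{\tilde\nu}$ and descends to $\widehat{\fM}_{\tilde\nu}(\tilde R)$; and since this substitution leaves the connection form $\tfrac{\tilde\nu}{2\pi}(d\tilde\theta_3-\tilde\theta_2\,d\tilde\theta_1)$ unchanged, along with $\tilde r$, $\tilde\theta_1$ and $d\tilde\theta_2$, it is an isometry of $g^{\widehat{\fM}}_{\tilde\kappa_0,L}$ preserving all three K\"ahler forms $\omega^{\widehat{\fM}}_{i,\tilde\kappa_0,L}$. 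A direct computation shows that conjugating $\iota_{\tilde n,\tilde t}$ by $\psi$ recenters the $\tilde\theta_2$‑reflection at the origin — which kills the $\tilde n$‑parameter — and, for this value of $c$, also annihilates the residual additive constant in the $\tilde\theta_3$‑component, so that $\psi\circ\iota_{\tilde n,\tilde t}\circ\psi^{-1}=\iota_{0,0}$. Conjugating the identity $\widehat{\fM}_\nu(R)/\Gamma=\widehat{\fM}_{\tilde\nu}(\tilde R)/\langle\iota_{\tilde n,\tilde t}\rangle$ by $\psi$ then produces $\widehat{\fM}_{\tilde\nu}(\tilde R)/\langle\iota_{0,0}\rangle$ with $\tilde\nu$ even (the model inequality $\tilde R>e^{2\pi(1-\tilde\kappa_0)/\tilde\nu}$ being inherited from Proposition~\ref{t:no-cylic-quotient} and unaffected by $\psi$), which is the assertion.

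I expect the main obstacle to be bookkeeping rather than any single hard step. One must check carefully that $\Gamma_0$ really is $\langle\xi_l,\zeta_{k,l,m}\rangle$ with the \emph{same} parameters $(k,l,m)$ appearing in the second‑case presentation — the conjugation relations among the generators forced by the presence of $\iota_{n,t}$ have to be invoked to pin down $l$, as a naive choice can make $l$ "too small" — and one must propagate the two parity constraints ($nl$ even and $\tilde\nu$ even) through both changes of variables, since it is precisely the parity of $\tilde n=nl$ that makes the final normalizing map $\psi$ well defined on $\Nil^3_{\tilde\nu}$.
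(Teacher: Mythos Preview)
Your proposal is correct and follows essentially the same route as the paper: first quotient by the index-two subgroup $\Gamma_0=\langle\xi_l,\zeta_{k,l,m}\rangle$ via Proposition~\ref{t:no-cylic-quotient}, then normalize the residual involution $\iota_{\tilde n,\tilde t}$ to $\iota_{0,0}$ by a Heisenberg translation in $(\tilde\theta_2,\tilde\theta_3)$. Your write-up is in fact more careful than the paper's in tracking the parity constraints (that $\tilde n=nl$ is even, which is exactly what makes your normalizing map $\psi$ descend to $\Nil^3_{\tilde\nu}$), and your explicit value of $c$ is the correct one for landing on $\iota_{0,0}$ on the nose.
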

\begin{proof}In this case, from Remark \ref{r:groups}, we have
$\Gamma = (\ZZ_l \times \ZZ_{k}) \rtimes \ZZ_2$.
Then $\ZZ_2$ acts on $\widehat{\fM}_{\nu}(R)/G$, where $G = \ZZ_l \times \ZZ_{k}$.  
Letting $\varphi(\t1, \t2, \t3) = ( \t1 + 2 \pi m l^{-1} \nu^{-1} , k \t2, k \t3)$, we see that
\begin{align}
\varphi^{-1} \iota_{n,t} \varphi = \iota_{\tilde{n},\tilde{t}}
\end{align}
for some $\tilde{n} \in \ZZ$ and $\tilde{t} \in \RR$. So we can assume that we are in the second case of Proposition \ref{t:no-cylic-quotient} with  $\tilde{k}=\tilde{l}= 1$ and $\tilde{m}=0$. Then we simply define 
\begin{align}
\hat{\theta}_2 \equiv \tilde{\theta}_2 - \pi \tilde{n} \tilde{\nu}^{-1}, \quad  \hat{\theta}_3 \equiv \tilde{\theta}_3 - \pi \tilde{n} \tilde{\nu}^{-1} \tilde{\theta}_1 - \frac{\tilde{t}}{2}
\end{align}
to change $\tilde{n}$ and $\tilde{t}$ to $0$.
\end{proof}
\begin{remark}The level sets $\{r = r_0\}$ on $\fM_{2\nu}(R)=\widehat{\fM}_{2\nu}(R) / \iota$ are
non-orientable line-bundles over Klein bottles, and are
\textit{infranilmanifolds}, which are double covered by nilmanifolds in $\widehat{\fM}_{2\nu}(R)$.
\end{remark}

\section{Weighted analysis on the ALG$^*$ model space}
\label{s:weighted-model}
In this section, we begin our analysis on the model space $(\widehat{\fM}_{\nu}(R), g^{\widehat{\fM}}_{\kappa_0,L})$.  To simplify notation, we will abbreviate $ (\widehat{\fM}_{\nu}(R), g^{\widehat{\fM}}_{\kappa_0,L})$ by just $(\widehat{\fM}, g^{\widehat{\fM}})$.
Without loss of generality, by scaling we can assume that the parameter $L = 1$ in Definition~\ref{d:ALG*-model}. Let us introduce the following notation
\begin{align}
\begin{split}
  E&=1, \quad E^1= V^{1/2} dx, \quad E^2= V^{1/2} dy, \quad E^3=V^{1/2} d\t2, \quad E^4=V^{-1/2}\Theta,\\ 
E^{12}&=E^1\wedge E^2, \quad E^{13}=E^1\wedge E^3, \quad \ldots \quad , \  E^{1234} = E^1 \wedge E^2 \wedge E^3 \wedge E^4.
\end{split}
\end{align}
We compute that 
\begin{align}
d E^1 = \frac{1}{2} V^{-1/2} \frac{\nu}{2\pi} \frac{1}{r} dr \wedge dx = \frac{1}{2} V^{-1/2} \frac{\nu}{2\pi} \frac{y}{r^2} dy \wedge dx = \frac{1}{2} V^{-3/2} \frac{\nu}{2\pi} \frac{y}{r^2} E^2 \wedge E^1,
\end{align}
\begin{align}
d E^2 = \frac{1}{2} V^{-1/2} \frac{\nu}{2\pi} \frac{1}{r} dr \wedge dy = \frac{1}{2} V^{-1/2} \frac{\nu}{2\pi} \frac{x}{r^2} dx \wedge dy = \frac{1}{2} V^{-3/2} \frac{\nu}{2\pi} \frac{x}{r^2} E^1 \wedge E^2,
\end{align}
\begin{align}
d E^3 = \frac{1}{2} V^{-1/2} \frac{\nu}{2\pi} \frac{1}{r} dr \wedge d \t2 = \frac{1}{2} V^{-1/2} \frac{\nu}{2\pi} \frac{x dx + y dy}{r^2} \wedge d \t2 = \frac{1}{2} V^{-3/2} \frac{\nu}{2\pi} \frac{x E^1 + y E^2}{r^2} \wedge E^3,
\end{align}
and
\begin{align}
  \begin{split}
d E^4 & = -\frac{1}{2} V^{-3/2} \frac{\nu}{2\pi} \frac{dr}{r} \wedge \Theta + V^{-1/2} \frac{\nu}{2\pi} d\t1 \wedge d\t2 \\
& = -\frac{1}{2} V^{-3/2} \frac{\nu}{2\pi} \frac{x d x + y d y}{r^2} \wedge \Theta + V^{-1/2} \frac{\nu}{2\pi} \frac{x dy - y d x}{r^2} \wedge d\t2 \\
& = -\frac{1}{2} V^{-3/2} \frac{\nu}{2\pi} \frac{x E^1 + y E^2}{r^2} \wedge E^4 + V^{-3/2} \frac{\nu}{2\pi} \frac{x E^2 - y E^1}{r^2} \wedge E^3.
\end{split}
\end{align}
By Cartan’s structural equations
\begin{align}
dE^i = - E^i_j \wedge E^j, \quad E^i_j = -E^j_i,
\end{align}
we see that 
$|E^i_j| \le C \cdot r^{-1}V^{-3/2}$.
 In other words,
$|\nabla_{E_i}E_j| \le C \cdot r^{-1}V^{-3/2}$,
 where $E_i$ are the dual orthonormal basis of $E^i$.
It follows that
$|\nabla_{E_k}\nabla_{E_i}E_j| \le C \cdot r^{-2}V^{-2}$,
 which implies that the curvature of the model space satisfies
\begin{align}\label{e:ALG*-curvature-decay}
|\mathrm{Rm}| =  O(r^{-2}V^{-2}) = O ( r^{-2} (\log r)^{-2}),
\end{align}
as $r \to \infty$. For $j \in \ZZ$ fixed, let
\begin{equation}
\label{e:jfixed}
\omega\equiv \sum\limits_{I\subset\{1,2,3,4\}} e^{\i \cdot  j \cdot \t1} \omega_{I}(r) E^I,
\end{equation} 
where $\omega_I(r)$ are smooth functions in $r$, and where the empty subset corresponds to $E = 1$. Then
\begin{equation}
\begin{split}
&\  \Big| \nabla^* \nabla \omega - \sum\limits_{I\subset\{1,2,3,4\}} (\nabla^* \nabla (e^{\i \cdot  j \cdot \t1} \omega_{I})) E^I \Big| \\
 \le & \  C \sum\limits_{I\subset\{1,2,3,4\}} \Big( |\nabla (e^{\i \cdot  j \cdot \t1} \omega_{I})| \cdot |\nabla E^I| + |\omega_{I}| \cdot |\nabla^* \nabla E^I| \Big)
\end{split}
\end{equation}
for a constant $C$ independent of $j$, where $\nabla^*$ is the $L^2$-adjoint of $\nabla$. 
By the Weitzenb\"ock formula,
\begin{align}
|\Delta \omega - \nabla^* \nabla \omega| \le C |\mathrm{Rm}| |\omega|.
\end{align}
Our convention for the Hodge star operator is that $\alpha \wedge * \beta = g(\alpha, \beta) dV_g$, the divergence operator is $\delta = - * d *$, and the Laplacian is the Hodge Laplacian $\Delta = d \delta + \delta d$.

 On the other hand, if we define the following operator
\begin{align}
L_{\RR^2,j}\omega \equiv  \sum_{I\subset\{1,2,3,4\}} e^{\i\cdot  j \cdot \t1} (\omega_{I}''+r^{-1}\cdot \omega_{I}'-j^2 \cdot r^{-2}\cdot \omega_{I}) E^I,
\end{align}
then by \eqref{Expansion-function}, 
\begin{align}
\sum\limits_{I\subset\{1,2,3,4\}} ( \nabla^* \nabla (e^{\i \cdot  j \cdot \t1} \omega_{I})) E^I = - V^{-1} L_{\dR^2,j}\omega.
\end{align}
In conclusion, there exists a constant $C$, depending only upon the model space, 
such that
\begin{align}
  |\Delta \omega + V^{-1} L_{\dR^2,j}\omega| \leq  C \cdot r^{-2} \cdot V^{-2} |\omega| + C \cdot r^{-1} \cdot V^{- 3 / 2} \cdot |\nabla\omega|,
\label{Expansion}
 \end{align}
for any $\omega$ of the form \eqref{e:jfixed}.
The estimate \eqref{Expansion} will be used to carry out the weighted analysis on the ALG$^*$ model space $\widehat{\fM}$. To start with, we define the weighted norms on the model space.
\begin{definition}
[Weighted Sobolev norms]\label{d:model-weighted-norm}
For any $\mu\in\dR$, we define the weight function
\begin{align}
\varrho_{\mu}(\bx)\equiv \fs(\bx)^{- \mu - 1},\quad \forall \ \bx\in \widehat{\fM},	
\end{align}
where $\fs(\bx)= r(\underline{\bx}) \cdot V(r(\underline{\bx}))^{\frac{1}{2}}$, $\underline{\bx} = \pr_{\dR^2}(\bx)$, $\pr_{\dR^2}:\widehat{\fM}\to \dR^2$ is the natural projection, and $r$ is the radial distance to the cone vertex of $\dR^2$. Then the Sobolev norms are defined as follows:
\begin{align}
 \|\omega\|_{L^2_{\mu}(\widehat{\fM})} \equiv \Big(\int_{\widehat{\fM}} |\omega\cdot \varrho_{\mu}|^2 \dvol_{\widehat{\fM}}\Big)^{\frac{1}{2}},\quad 
 \|\omega\|_{W^{k,2}_{\mu}(\widehat{\fM})} \equiv  \Big(\sum_{m=0}^{k} \|\nabla^m\omega\|_{L_{\mu-m}^2(\widehat{\fM})}^2\Big)^{\frac{1}{2}}, \end{align}
where $\omega$ is a tensor field on $\widehat{\fM}$. 
\end{definition}

We will require the following weighted Sobolev estimates, which hold for tensor fields of any type. 
\begin{proposition}
\label{p:weighted-Sobolev}
For any  $\mu \in \RR$ and $k\in\dN_0$, there exists a constant $C=C(\kappa_0,\nu,\mu,k)>0$ such that for any $\omega \in  W^{k + 2,2}_{\mu}(\widehat{\fM}_{\nu}(R))$,
\begin{align}\label{e:weighted-W^{2,k}}
\|\omega\|_{W^{k+2,2}_{\mu}(\widehat{\fM}_{\nu}(2R))} \le C \Big( \|\omega\|_{L^{2}_{\mu}(\widehat{\fM}_{\nu}(R))} +  \|\Delta \omega\|_{W^{k,2}_{\mu - 2}(\widehat{\fM}_{\nu}(R))} \Big).
\end{align}
\end{proposition}
\begin{proof}
The argument is standard, so we will be brief; see for example \cite[Proposition~6.16]{CVZ}. For $\bx\in\widehat{\fM}_\nu(2R)$, we consider the rescaled metric $\tilde{g}\equiv 100 \cdot d^{-2}\cdot g^{\widehat{\fM}}$, where $d$ is the $g^{\widehat{\fM}}$-distance between $\bx$ and $\{r=R\}$. It is straightforward to check that $|\Rm_{\tilde{g}}| \leq C_0$ on $B_{2}^{\tilde{g}}(\bx)$ for some constant $C_0>0$ independent of $\bx \in \widehat{\fM}_\nu(2R)$; see~\eqref{e:ALG*-curvature-decay}. The standard elliptic estimate is
\begin{align}\label{e:standard-W^{2,k}}
 \|\omega\|_{W^{k+2,2}(B_{1/2}^{\tilde{g}}(\bx))} \le C \cdot \|\omega\|_{L^{2}(B_1^{\tilde{g}}(\bx))} + C \cdot \|\Delta \omega\|_{W^{k,2}(B_1^{\tilde{g}}(\bx))}.
\end{align}
Rescaling back to $g^{\widehat{\fM}}$, and using a simple covering argument, \eqref{e:weighted-W^{2,k}} follows.
\end{proof}

\begin{proposition}
\label{p:Sobolevemb} For any $\mu \in \RR$ and $k\in\dN_0$, there exists a
 constant $C=C(\kappa_0,\nu,\mu,k)>0$ such that for any $\omega \in  W^{k + 3,2}_{\mu}(\widehat{\fM}_{\nu}(R))$, 
\begin{align}
\sum\limits_{m = 0}^k  \sup_{\bm{x} \in \widehat{\fM}_\nu(2R)} | (\mathfrak{s}(\bm{x}))^{m-\mu} \nabla^m \omega (\bm{x})|
\leq C  \Vert \omega \Vert_{ W^{k + 3,2}_{\mu}(\widehat{\fM}_\nu(R))}.\end{align}

\end{proposition}
\begin{proof}
The argument is similar to the proof of Proposition~\ref{p:weighted-Sobolev}, and is omitted. 
\end{proof}

The following estimate is key to our weighted analysis.
\begin{proposition}
	Given $\kappa_0\in\dR$, $\nu \geq 1$ and  $\mu  \in \dR\setminus \ZZ$, there exists a constant $R>0$ depending only on $\kappa_0$, $\nu$ and $\mu$ such that the following property holds. If $\omega$ is a smooth form compactly supported on a subset of $\{r>R\}$, then for any $k\in\dN_0$, there exists a constant $C=C(\kappa_0,\nu,\mu,k)>0$ such that
\begin{align}
\|\omega\|_{W^{k+2,2}_{\mu}(\widehat{\fM})} \leq C \cdot \|\Delta \omega\|_{W^{k,2}_{\mu-2}(\widehat{\fM})}.
\label{e:Weighted-analysis}
\end{align}
\label{p:weighted-estimate-model-space}
\end{proposition}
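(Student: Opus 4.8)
The plan is to separate variables adapted to the Gibbons--Hawking fibration and reduce \eqref{e:Weighted-analysis} to a family of one-dimensional weighted estimates, using \eqref{Expansion} as the essential input for the slowest modes. By Proposition~\ref{p:weighted-Sobolev}, $\|\omega\|_{W^{k+2,2}_{\mu}(\widehat{\fM})}\le C\big(\|\omega\|_{L^2_{\mu}(\widehat{\fM})}+\|\Delta\omega\|_{W^{k,2}_{\mu-2}(\widehat{\fM})}\big)$, so it suffices to prove the \emph{a priori} bound $\|\omega\|_{L^2_{\mu}(\widehat{\fM})}\le C\|\Delta\omega\|_{L^2_{\mu-2}(\widehat{\fM})}$ for $\omega$ compactly supported in $\{r>R\}$. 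The metric $g^{\widehat{\fM}}$ is invariant under the isometric circle action in the fibre coordinate $\theta_3$, under rotation in $\theta_1$, and under the circle action generated by $\partial_{\theta_2}+\theta_1\partial_{\theta_3}$ (whose time-$2\pi$ map is the deck transformation $\sigma_2$). Since $\Delta$ commutes with isometries, it preserves the resulting orthogonal decomposition of $\omega$ into: (i) the \emph{charged} part, on which $\partial_{\theta_3}$ acts with nonzero eigenvalue; (ii) the $\theta_3$-invariant part with nonzero $\theta_2$-frequency $m$; and (iii) the part of the shape $\omega=\sum_{j\in\ZZ}e^{\i j\theta_1}\sum_{I}\omega^{(j)}_{I}(r)E^{I}$ appearing in \eqref{e:jfixed}. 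It therefore suffices to prove the $L^2_{\mu}$ bound on each piece with a constant independent of the mode, and then sum the squared estimates.

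On piece (i), the dual orthonormal frame satisfies $E_4=\tfrac{2\pi}{\nu}V^{1/2}\partial_{\theta_3}$, hence $|\nabla\omega|^2\gtrsim V|\omega|^2$; pairing $\Delta\omega$ with $\omega$ against the weight $\varrho_{\mu}^2$, moving the weight-derivative and curvature terms (both of strictly lower order) to the right, and using $V\ge V(R)\to\infty$ on $\{r>R\}$, yields $\|\omega\|_{L^2_{\mu}}\le CV(R)^{-1/2}\|\Delta\omega\|_{L^2_{\mu-2}}$, which gives the claim once $R$ is large. On piece (ii) an analogous but longer computation shows that $\Delta$ acts on each component $\omega_I$ as $-V^{-1}(\omega_I''+r^{-1}\omega_I'-j^2r^{-2}\omega_I)$ plus a nonnegative ``mass'' term of size $\gtrsim m^2V^{-1}$ plus lower-order terms; conjugating this operator by the weight gives, in the variable $t=\log r$, $\partial_t^2+2\mu\partial_t+(\mu^2-j^2)+m^2+O(t^{-1})$, whose quadratic form on compactly supported functions is at least $\big(m^2-C(\mu)t^{-1}\big)\|v\|^2_{L^2(dt)}$. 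Since $m^2\ge 1$ and the mass does not vanish, choosing $R=R(\kappa_0,\nu,\mu)$ large absorbs the corrections and gives the estimate uniformly over all $j$ and all $m\ne 0$.

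The crux is piece (iii). By \eqref{Expansion}, $\Delta\omega+V^{-1}L_{\RR^2,j}\omega$ is bounded by $Cr^{-2}V^{-2}|\omega|+Cr^{-1}V^{-3/2}|\nabla\omega|$, where $L_{\RR^2,j}$ acts on each component by $\omega_I\mapsto\omega_I''+r^{-1}\omega_I'-j^2r^{-2}\omega_I$. Using $\dvol_{\widehat{\fM}}\sim Vr\,dr\,d\theta_1\,d\theta_2\,d\theta_3$ and $\fs=rV^{1/2}$, the model inequality $\|\omega\|_{L^2_{\mu}}\le C\|V^{-1}L_{\RR^2,j}\omega\|_{L^2_{\mu-2}}$ reduces, after the substitution $r=e^{t}$ and conjugation by the weight, to $\|v\|_{L^2(dt)}\le C\big\|\big(\partial_t^2+2\mu\partial_t+(\mu^2-j^2)\big)v\big\|_{L^2(dt)}$ for $v$ compactly supported in $(\log R,\infty)$, modulo $O(t^{-1})$ corrections coming from the slowly varying factor $V^{-\mu}$. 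The symbol $-\sigma^2+2\i\mu\sigma+\mu^2-j^2$ vanishes for some real $\sigma$ precisely when $\mu=\pm|j|$, so this estimate holds for every $j$ as soon as $\mu\notin\ZZ$, with operator norm $\lesssim 1/|\mu^2-j^2|$ and hence uniformly bounded in $j$. Finally, the two error terms from \eqref{Expansion} are bounded in the relevant weighted norms by $C(\mu)V(R)^{-1}\big(\|\omega\|_{L^2_{\mu}(\widehat{\fM})}+\|\Delta\omega\|_{L^2_{\mu-2}(\widehat{\fM})}\big)$ after using Proposition~\ref{p:weighted-Sobolev} to bound $\|\nabla\omega\|$; taking $R=R(\kappa_0,\nu,\mu)$ so large that $C(\mu)V(R)^{-1}<\tfrac12$ absorbs the first term, and summing the squared estimates over the orthogonal pieces proves \eqref{e:Weighted-analysis}.

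The main obstacle is the absence of a uniform spectral gap on the cross-sections: because the $\theta_2$-circle has length $\sim V^{1/2}\to\infty$, the lowest nonzero eigenvalue of the cross-sectional Laplacian decays like $V^{-1}$ rather than staying bounded below (as it would in the ALG case), so the intermediate modes in (ii) cannot be discarded by a crude gap argument; one must keep the genuine though $r$-dependent mass term, and let $R$ depend on $\mu$ --- a dependence which also swallows the error terms and the blow-up of the one-dimensional constant $C(\mu)$ as $\mu$ tends to an integer. A secondary technical point, responsible for the lengthy appendix formulas, is that $\Delta$ couples the components $\omega_I$ through the connection $1$-forms $E^i_j=O(r^{-1}V^{-3/2})$; these couplings are of the same lower order as the error terms in \eqref{Expansion} and are handled identically, but verifying this degree by degree requires the explicit computations.
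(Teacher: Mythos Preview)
Your strategy is sound and parallels the paper's: reduce to an $L^2_\mu$ estimate via Proposition~\ref{p:weighted-Sobolev}, split $\omega$ into a $T^2$--invariant part and its orthogonal complement, handle the former mode--by--mode using \eqref{Expansion}, and handle the latter by exploiting a fibre spectral gap.  The differences are in the implementation.

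For piece (iii), you use the Fourier transform in $t=\log r$ and read off the constant $|\mu^2-j^2|^{-1}$ from the symbol.  The paper instead proves two weighted Hardy--type inequalities (Claims~1 and~2) and obtains exactly the same constant by direct integration by parts; this avoids having to perturb a constant--coefficient model by the $O(t^{-1})$ terms coming from the factor $V^{-\mu}$ in the weight, which you acknowledge but do not fully treat.  For pieces (i) and (ii), you separate the $\theta_3$--charged and $\theta_2$--charged modes and argue by quadratic form bounds; the paper treats both at once via the $T^2$--Poincar\'e inequality $\int_{\mathcal A}|\omega|^2\le CV\int_{\mathcal A}|\nabla\omega|^2$ on dyadic annuli, then iterates to $\int|\omega|^2\le CV^2\int|\Delta\omega|^2$ and sums.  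The paper's route is more robust (no further mode decomposition, no conjugation), while yours is more spectral in flavour.

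There is one concrete slip in your piece (ii): after conjugation and the change $t=\log r$, the $\theta_2$--mass enters as $-m^2e^{2t}$, not $+m^2$.  Indeed $r^2\bigl(\omega_I''+r^{-1}\omega_I'-j^2r^{-2}\omega_I-m^2\omega_I\bigr)=(\partial_t^2-j^2-m^2e^{2t})\omega_I$.  With your written operator, the claimed quadratic form lower bound $(m^2-Ct^{-1})\|v\|^2$ does not follow; with the correct mass $m^2e^{2t}\ge m^2R^2$, the bound is immediate and in fact much stronger, so the argument survives.  Apart from this easily repaired formula, your outline is correct.
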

\begin{proof}
We can decompose $\omega=\sum\limits_{I\subset\{1,2,3,4\}} \omega_I E^I$ into two types. For the first type, $\omega_I$ depends only on $r$ and $\t1$. For the second type, $\int_{T^2} \omega_I(r,\t1,\t2,\t3) d \t2 d \t3 = 0$. It is easy to see that the Hodge-Laplacian $\Delta$ preserves this decomposition. So we only need to prove \eqref{e:Weighted-analysis} in two steps.

\vspace{0.1cm}
\noindent
{\bf Step 1.} We will consider the case when $\omega_I$ depends only on $r$ and $\theta_1$. In this case, we can write the coefficient function $\omega_I$ in terms of the Fourier expansion:
\begin{align}
\label{omdec}
\omega=\sum_{I\subset\{1,2,3,4\}} \sum_{j = -\infty}^{\infty} e^{\i \cdot j\cdot \t1}\omega_{I,j}(r) E^I.
\end{align}
Let us start with the proof of the following weighted $L^2$-estimate 
\begin{align}
\|\omega\|_{L_{\mu}^2(\widehat{\fM})} \leq C \cdot \|\Delta \omega\|_{L_{\mu-2}^2(\widehat{\fM})}.
\label{e:L^2-estimate}
\end{align}
Since $\Delta$ preserves the decomposition in \eqref{omdec}, and the subspaces for different $j$ are $L^2_{\mu}$-orthogonal, we can assume that
\begin{align}
\omega=\sum_{I\subset\{1,2,3,4\}} e^{\i  \cdot j \cdot \t1}\omega_{I}(r) E^I  \label{e:frequency-j-omega}
\end{align}
for some fixed $j \in \ZZ$. Then we need to prove \eqref{e:L^2-estimate} for a constant $C$ independent of $j$. The proof of \eqref{e:L^2-estimate} will be reduced to computations in $\dR^2$. We will need the following two claims.

\vspace{0.1cm}
\noindent
{\bf Claim 1.}
Let $\alpha\neq -1$, $\beta\in\dR$ and let $R>0$ satisfy 
\begin{equation}
\Big|\beta \cdot \frac{\nu}{2\pi} \cdot (\kappa_0 + \frac{\nu}{2\pi}\log R)^{-1}\Big| \leq \frac{|\alpha+1|}{2}.
\end{equation} Then for any $f\in C_0^{\infty}(\{r\geq R\})$,
\begin{align}
\int_R^{\infty}|f|^2 r^{\alpha}V^{\beta}dr
\leq \frac{16}{(\alpha+1)^2}\int_R^{\infty}|f'|^2r^{\alpha+2}V^{\beta}dr.
\end{align}
\begin{proof}
  It is straightforward that if $R$ is chosen such that
  \begin{align}
    \Big|\beta \cdot \frac{\nu}{2\pi} \cdot (\kappa_0 + \frac{\nu}{2\pi} \log R)^{-1}\Big| \leq \frac{|\alpha+1|}{2},
\end{align}
then for any $r\geq R$,
\begin{align}
\Big|\frac{d}{dr}(r^{\alpha+1}V^{\beta})\Big|	
= r^{\alpha}V^{\beta}\cdot 
\Big|\alpha + 1 + \beta \cdot \frac{\nu}{2\pi} \cdot V^{-1}\Big|\geq   \frac{|\alpha+1|}{2} \cdot r^{\alpha}V^{\beta}.
\end{align}
So it follows that 
\begin{align}
\begin{split}
  \int_{R}^{\infty}|f|^2 r^{\alpha}V^{\beta} dr 
& \leq \frac{2}{|\alpha + 1|} \Big|\int_R^{\infty} |f|^2 d(r^{\alpha+1}V^{\beta})\Big|
\\
& =  \frac{4}{|\alpha + 1|} \Big|\int_R^{\infty} \Rea(f \cdot \bar f') \cdot r^{\alpha+1}V^{\beta}dr\Big|
\\
& \leq \frac{4}{|\alpha + 1|}\Big(\int_R^{\infty}|f|^2 r^{\alpha}V^{\beta}dr\Big)^{\frac{1}{2}}\Big(\int_R^{\infty}|f'|^2r^{\alpha+2}V^{\beta}dr\Big)^{\frac{1}{2}}.
\end{split}
\end{align}
Then the desired inequality immediately follows.
\end{proof}

\vspace{0.1cm}
\noindent
{\bf Claim 2.}  Let $\omega$ be defined in \eqref{e:frequency-j-omega}.
 Then for $\mu\in\dR\setminus \dZ$ and $R>0$ satisfying 
\begin{equation}
\Big|-\mu \cdot \frac{\nu}{\pi} \cdot (\kappa_0 + \frac{\nu}{2\pi}\log R)^{-1}\Big| \leq \min\{|j+\mu|,|j-\mu|\},
\end{equation}
we have that
\begin{align}
\|\omega\|_{L^2_{\mu}(\widehat{\fM})}^2 \leq 16 \cdot (j+\mu)^{-2} \cdot (j-\mu)^{-2} \cdot \|r^2\cdot L_{\RR^2,j}\omega\|_{L^2_{\mu}(\widehat{\fM})}^2.
\end{align}

\begin{proof}
 Recall that $\dvol_{\widehat{\fM}} = V \cdot r \cdot dr \wedge d\theta_1\wedge d\theta_2 \wedge \Theta$,
and thus \begin{align}
 \|\omega\|^2_{L^2_{\mu}(\widehat{\fM})}  = 8 \pi^3 \sum_{I\subset\{1,2,3,4\}} \int_{R}^{\infty} |\omega_{I}|^2 r^{-2\mu-1}V^{-\mu} dr.
 \end{align}
Then let us estimate the weighted $L^2$-integral of $r^2 \cdot L_{\dR^2,j}\omega$. By definition,
\begin{align}
\begin{split}
\|r^2 L_{\RR^2,j}\omega\|_{L^2_{\mu}(\widehat{\fM})}^2 & = 8 \pi^3 \sum_{I\subset\{1,2,3,4\}} \int_{R}^{\infty} |\omega_{I}''+r^{-1}\cdot\omega_{I}'- j^2 \cdot r^{-2} \cdot \omega_{I}|^2 r^{-2\mu+3} V^{-\mu}dr \\
& = 8 \pi^3 \sum_{I\subset\{1,2,3,4\}} \int_{R}^{\infty} | ( r^{2j+1} ( r^{-j} \omega_{I})' )'|^2 r^{-2\mu-2j+1}V^{-\mu}dr.\end{split} \end{align}
Since $\omega$ has compact support in $\widehat{\fM}$, applying Claim 1 to above integral,
\begin{align}
\begin{split}
\|r^2 L_{\RR^2,j}\omega\|_{L^2_{\mu}(\widehat{\fM})}^2 & \geq 8 \pi^3 \cdot \frac{(-2\mu-2j)^2}{16}  \sum_{I\subset\{1,2,3,4\}} \int_{R}^{\infty} |r^{2j+1} ( r^{-j} \omega_{I})'|^2 r^{-2\mu-2j-1} V^{-\mu}dr
\\
& = 2 \pi^3 (j+\mu)^2 \sum_{I\subset\{1,2,3,4\}} \int_{R}^{\infty} |( r^{-j} \omega_{I})'|^2 r^{-2\mu+2j+1} V^{-\mu}dr \\
& \geq 2 \pi^3 (j+\mu)^2 \cdot \frac{(-2\mu+2j)^2}{16} \sum_{I\subset\{1,2,3,4\}} \int_{R}^{\infty} |r^{-j} \omega_{I}|^2 r^{-2\mu+2j-1} V^{-\mu}dr \\
& = \frac{\pi^3}{2} (j+\mu)^2 (j-\mu)^2 \sum_{I\subset\{1,2,3,4\}} \int_{R}^{\infty} |\omega_{I}|^2 r^{-2\mu-1}  V^{-\mu}dr  \\
& = \frac{(j+\mu)^2 (j-\mu)^2}{16} \|\omega\|_{L^2_{\mu}(\widehat{\fM})}^2.
\end{split}
\end{align}
The proof of the claim is done.
\end{proof}
Now we are ready to finish the proof of \eqref{e:L^2-estimate}. Recall that by \eqref{Expansion},
\begin{align}
|r^2 V \Delta \omega + r^2 L_{\RR^2,j}\omega| \leq  C \cdot V^{-1} |\omega| + C \cdot r \cdot V^{- 1 / 2} \cdot |\nabla\omega| \le C \cdot V^{-1} (|\omega| + r \cdot V^{1 / 2} \cdot |\nabla\omega|)
\end{align}
for $r \ge R$, where $C>0$ is a constant independent of $j$. By Claim 2,
\begin{align}
\|\omega\|_{L^2_{\mu}(\widehat{\fM})} \leq  C \cdot \|r^2 L_{\RR^2,j}\omega\|_{L^2_{\mu}(\widehat{\fM})}.
\end{align}
Consequently, 
\begin{align}\label{e:absorbing-L2-omega}
  \begin{split}
\|\omega\|_{L^2_{\mu}(\widehat{\fM})} & \le C \cdot \|\fs^2 \cdot \Delta \omega\|_{L^2_{\mu}(\widehat{\fM})} + C \cdot V(R)^{-1} \|\omega\|_{W^{1,2}_{\mu}(\widehat{\fM})} \\
& =  C \cdot \|\Delta \omega\|_{L^2_{\mu - 2}(\widehat{\fM})} + C \cdot V(R)^{-1} \|\omega\|_{W^{1,2}_{\mu}(\widehat{\fM})} \\
& \le C \cdot \|\Delta \omega\|_{L^2_{\mu - 2}(\widehat{\fM})} + C \cdot V(R)^{-1} \|\omega\|_{W^{2,2}_{\mu}(\widehat{\fM})}.
\end{split}
\end{align}
If $R$ is chosen sufficiently large,  \eqref{e:L^2-estimate} follows by  plugging \eqref{e:weighted-W^{2,k}} for $k =0$ into \eqref{e:absorbing-L2-omega}.

The estimate \eqref{e:Weighted-analysis} follows immediately from  \eqref{e:L^2-estimate} and \eqref{e:weighted-W^{2,k}}.
  
\vspace{0.1cm}
\noindent
{\bf Step 2.} We will
 consider the case when $\omega = \sum_{I\subset\{1,2,3,4\}}\omega_I(r,\theta_1,\theta_2,\theta_3) \cdot E^I$ satisfies 
\begin{align}\int_{T^2} \omega_I(r,\t1,\t2,\t3) d \t2 d \t3 = 0.
\label{e:integral-over-torus-vanishing}
\end{align}
We will prove that there exists $C>0$ such that if $\omega$ satisfies \eqref{e:integral-over-torus-vanishing}, then
 \begin{align}
 \|\omega\|_{W^{2,2}_{\mu}(\widehat{\fM})} \leq C \cdot \|\Delta \omega\|_{L^2_{\mu - 2}(\widehat{\fM})}.	\label{e:weighted-W^{2,2}}
 \end{align}
The higher order estimate \eqref{e:L^2-estimate} then follows from Proposition~\ref{p:weighted-Sobolev}.

 First, we will show that there is a constant $C>0$ such that if the coefficient function of $\omega$ satisfies \eqref{e:integral-over-torus-vanishing}, then for any sufficiently large $R$ and for any $r\in[R,\infty)$,
 \begin{align}
 \int_{\mathcal{A}_{r/2,2r}}|\omega|^2 \dvol_{\widehat{\fM}} \leq C\cdot  V(r) \cdot \int_{\mathcal{A}_{r/2,2r}} |\nabla\omega|^2\dvol_{\widehat{\fM}}, \label{e:poincare-inequality}
 \end{align}
where $\mathcal{A}_{r/2,2r} \equiv \pr_{\RR^2}^{-1}(A_{r/2,2r}(0^2))$.
There are two sub-cases to analyze: 
\begin{enumerate}
\item $\omega_I$ satisfies $\int_{S^1} \omega_I(r,\t1,\t2,\t3) d \t3 = 0$ for all $r$, $\t1$, and $\t2$,  
\item $\omega_I=\omega_I(r, \t1, \t2)$  and it satisfies $\int_{S^1} \omega_I(r,\t1,\t2) d \t2 = 0$ for all $r$ and $\t1$.
\end{enumerate}
In the first case, applying the Poincar\'e inequality on the circle parametrized by $\theta_3$, we find that for any $(r,\theta_1,\theta_2)\in[R,\infty)\times[0,\pi]\times[0,2\pi]$, \begin{align} \int_{S^1}|\omega_I|^2 d\t3 \leq C \int_{S^1} |\partial_{\t3}\omega_I |^2 d\t3  \leq \frac{C}{V(r)} \int_{S^1} |\nabla\omega|^2(r, \t1, \t2, \t3) d \t3.
\end{align}
 In the second case, applying Poincar\'e inequality on  the circle parametrized by $\theta_2$, it follows that for any $(r,\theta_1)\in[R,\infty)\times[0,\pi]$,
\begin{align}
 \int_{S^1} |\omega_I|^2 d \t2 \leq C \int_{S^1}|\partial_{\t2}\omega_I|^2 d\t2\leq   C \cdot V(r) \cdot  \int_{S^1} |\nabla\omega|^2(r, \t1,\t2) d\t2.
\end{align}
 Combining the above two cases and integrating over $\widehat{\fM}$, inequality \eqref{e:poincare-inequality}
 immediately follows from the condition $R\gg1$.

Now we proceed to prove \eqref{e:weighted-W^{2,2}}. Let $r_i = 2^{i+1} \cdot R$. Then $\{A_{r_i/2,2r_i}(0^2)\}_{i=0}^{\infty}$ is a covering of $\dR^2\setminus \overline{B_R(0^2)}$ consisting of a sequence of annuli such that the number of overlaps at every point is bounded by $2$. We denote $\mathcal{A}_i\equiv \pr^{-1}_{\RR^2}(A_{r_i/2,2r_i}(0^2))$ and $\fs_i \equiv \fs(r_i)$.
Let $\{\chi_i\}_{i=0}^{\infty}$ be a partition of unity subordinate to the above covering such that $\Supp(\chi_i)\subset  A_{r_i/2,2r_i}(0^2)$
 and 
\begin{equation}
|\nabla_{\mathbb{R}^2}\chi_i|^2+|\nabla^2_{\mathbb{R}^2}\chi_i| \leq C \cdot r_i^{-2}
\end{equation}
on $\dR^2$.
 We still denote by $\chi_i$ the lifting of $\chi_i$ to $\widehat{\fM}$. In terms of the model metric $g^{\widehat{\fM}}$, the following estimate holds on $\widehat{\fM}$:
\begin{align} |\nabla\chi_i|^2 + |\nabla^2\chi_i|\leq C \cdot \fs_i^{-2},\end{align}
where $C>0$ is independent of $i$.
By \eqref{e:poincare-inequality},
\begin{align}
	\int_{\widehat{\fM}}|\chi_i \omega|^2\leq C \cdot V_i\cdot  \int_{\widehat{\fM}}|\nabla(\chi_i\omega)|^2,
\end{align}
where $V_i\equiv V(r_i)$.
Since $\chi_i \omega$ has compact support in $\widehat{\fM}$, integrating by parts and applying the Cauchy-Schwartz inequality, we have that
\begin{align}
\begin{split}
\int_{\widehat{\fM}}|\chi_i \omega|^2\dvol_{\widehat{\fM}} & \leq C \cdot  V_i \cdot  \int_{\widehat{\fM}} (\chi_i \omega) \cdot \nabla^*\nabla (\chi_i \omega)\dvol_{\widehat{\fM}}
\nonumber\\
& \leq C \cdot V_i \cdot  \Big(\int_{\widehat{\fM}} |\chi_i \omega|^2\dvol_{\widehat{\fM}}\Big)^{\frac{1}{2}}\Big(\int_{\widehat{\fM}}|\nabla^*\nabla (\chi_i \omega)|^2\dvol_{\widehat{\fM}}\Big)^{\frac{1}{2}}.
\end{split}
\end{align}
So it follows that
\begin{align}
\int_{\widehat{\fM}}|\chi_i \omega|^2\dvol_{\widehat{\fM}} \leq C \cdot V_i^2 \cdot \int_{\widehat{\fM}}|\nabla^*\nabla (\chi_i \omega)|^2\dvol_{\widehat{\fM}}.
\end{align}
Using the Weitzenb\"ock formula, we have that
\begin{align}
\begin{split}
  |\nabla^* \nabla(\chi_i \omega)|
 & \leq |\Delta(\chi_i\omega)|+ C\cdot |\Rm|\cdot |\chi_i\cdot\omega|
\\
& \leq  |\Delta(\chi_i\omega)|+ C\cdot \fs_i^{-2} \cdot V^{-1} \cdot  |\omega|
\\
& \leq |\Delta\omega| + C\cdot \fs_i^{-1}\cdot |\nabla\omega| +  C\cdot \fs_i^{-2} \cdot |\omega|.
\end{split}
\end{align}
Therefore,
\begin{align}
  \begin{split}
\int_{\widehat{\fM}} |\chi_i \omega|^2 \dvol_{\widehat{\fM}}
& \leq C \cdot V_i^2 \int_{\widehat{\fM}} |\nabla^*\nabla (\chi_i \omega)|^2 \dvol_{\widehat{\fM}} \\
 &\leq \frac{C}{r_i^4} \int_{\mathcal{A}_i}\Big(|\Delta \omega|^2 \cdot \fs_i^4 + |\nabla \omega|^2 \cdot \fs_i^2 +   |\omega|^2 \Big)\dvol_{\widehat{\fM}}.
\end{split}
\end{align}  Taking the weighted $L^2$-norm, we find that
\begin{align}
\label{e:Noninvariant-L2}
\begin{split}
  \|\omega\|_{L_{\mu}^2(\widehat{\fM})}^2
& = \int_{\widehat{\fM}}|\omega\cdot \varrho_{\mu}|^2\dvol_{\widehat{\fM}}
\\
& \leq C \cdot \sum\limits_{i} \fs_i^{-2\mu-2}\cdot \int_{\widehat{\fM}}|\chi_i\omega|^2 \dvol_{\widehat{\fM}}
\\
& \leq C\cdot \sum\limits_{i} \frac{1}{r_i^4}\int_{\mathcal{A}_i}(|(\Delta\omega)\cdot\varrho_{\mu-2}|^2 + |(\nabla\omega)\cdot\varrho_{\mu-1}|^2 + |\omega \cdot\varrho_{\mu}|^2)\dvol_{\widehat{\fM}}
\\
& \leq \frac{C}{R^4} \cdot (\|\Delta\omega\|_{L_{\mu-2}^2(\widehat{\fM})}^2 + \|\nabla\omega\|_{L_{\mu-1}^2(\widehat{\fM})}^2 + \|\omega\|_{L_{\mu}^2(\widehat{\fM})}^2),
\end{split}
\end{align}
where the last inequality follows since the number of overlaps is bounded by $2$. 
It follows that
\begin{align}
\label{e:abc}
	 \|\omega\|_{L^{2}_{\mu}(\widehat{\fM})} \leq C \cdot  \|\Delta \omega\|_{L_{\mu-2}^2(\widehat{\fM})} + \frac{C}{R^2}(\|\nabla\omega\|_{L_{\mu-1}^2(\widehat{\fM})} + \|\omega\|_{L_{\mu}^2(\widehat{\fM})}).
\end{align}
If $R$ is chosen sufficiently large, then \eqref{e:weighted-W^{2,2}}
follows from \eqref{e:abc} and Proposition \ref{p:weighted-Sobolev}. 

To finish the proof, we can write any form $\omega = \omega_1 + \omega_2$ as the sum of these two types of forms. We then have
\begin{align}
\label{e:w222}
\begin{split}
\|\omega\|_{W_{\mu}^{2,2}(\widehat{\fM})} 
&\leq \|\omega_1\|_{W_{\mu}^{2,2}(\widehat{\fM})} + \|\omega_2\|_{W_{\mu}^{2,2}(\widehat{\fM})} \\
&\leq C \Big( \|\Delta\omega_1\|_{L_{\mu-2}^2(\widehat{\fM})}
+ \|\Delta\omega_2\|_{L_{\mu-2}^2(\widehat{\fM})}\Big) = C  \|\Delta\omega\|_{L_{\mu-2}^2(\widehat{\fM})},
\end{split}
\end{align}
where the last equality follows since the two subspaces are orthogonal in $L^2_{\mu-2}(\widehat{\fM})$. The higher order estimates then follow from \eqref{e:w222} and Proposition~\ref{p:weighted-Sobolev}. 
\end{proof}

\begin{corollary}
Given any $\mu\in\dR\setminus \dZ$, there exists a constant $\epsilon_0>0$ such that the following property holds. Let $\omega=\sum\limits_{I\subset\{1,2,3,4\}}  \omega_{I} E^I$ be a smooth form on $\widehat{\fM}$ that satisfies $\Delta \omega=0$ and $\|\omega\|_{W^{2,2}_{\mu}(\widehat{\fM})}<\infty$. If 
\begin{equation}
\int_{T^2} \omega_I(r,\t1,\t2,\t3) d \t2 d \t3 = 0
\end{equation}
for every $I\subset\{1,2,3,4\}$, then $|\omega|=O(e^{-\epsilon_0\cdot r})$ as $r\to\infty$.
\label{Exponential-decay}
\end{corollary}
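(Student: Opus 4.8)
The plan is to prove first that the $L^2$-energy of $\omega$ over the ends $\{r>t\}$ decays exponentially in $t$, and then to upgrade this to the pointwise bound by interior elliptic estimates. The mechanism is standard: the hypothesis $\int_{T^2}\omega_I\,d\theta_2\,d\theta_3=0$ says that, in the Fourier expansion of each coefficient function along the $\theta_2$- and $\theta_3$-circles, the bottom (constant) mode is absent, so the Hodge Laplacian carries a strictly positive ``mass'' on the surviving modes. Quantitatively this is exactly the content of the weighted Poincar\'e inequality \eqref{e:poincare-inequality}; since its proof is a fiberwise one-dimensional Poincar\'e inequality on the circles parametrized by $\theta_2$ and $\theta_3$, the same inequality holds verbatim on any unit shell $\{t<r<t+1\}$, with constant comparable to $V(t)$.

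The first step is a Caccioppoli inequality. Fix a cutoff $\phi=\phi_t$ depending only on $r$, with $\phi\equiv 0$ on $\{r\le t\}$, $\phi\equiv 1$ on $\{r\ge t+1\}$, and $|\phi'|\le 2$. Integrating the Weitzenb\"ock formula against $\phi^2\omega$, using $\Delta\omega=0$, the curvature bound \eqref{e:ALG*-curvature-decay}, and $|d\phi|^2_{g^{\widehat{\fM}}}=V^{-1}|\phi'|^2\le 4\,V(t)^{-1}$ on the shell, one obtains
\begin{align*}
\int_{\{r>t+1\}}|\nabla\omega|^2\,\dvol_{\widehat{\fM}}\le \frac{C}{V(t)}\int_{\{t<r<t+1\}}|\omega|^2\,\dvol_{\widehat{\fM}}+C\int_{\{r>t\}}r^{-2}V^{-2}|\omega|^2\,\dvol_{\widehat{\fM}}.
\end{align*}
The integration by parts is legitimate: one inserts a second cutoff $\psi_S$ supported in $\{r<2S\}$ and lets $S\to\infty$, the error terms vanishing because $\|\omega\|_{W^{2,2}_{\mu}(\widehat{\fM})}<\infty$. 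When $\mu\ge -1$, where the unweighted energies above may a priori be infinite, one runs the identical argument with $|\nabla\omega|^2$ and $|\omega|^2$ weighted by the fixed powers $\fs^{-2\mu}$ and $\fs^{-2\mu-2}$ respectively — these are the $L^2_{\mu-1}$- and $L^2_\mu$-norms, hence finite by hypothesis — and since these weights are comparable to constants on each unit shell and $|d\log\fs|_{g^{\widehat{\fM}}}=O(\fs^{-1})$ is of lower order, the structure of the estimate is unchanged, at the cost of $\epsilon_0$ depending on $\mu$.

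Next I would substitute into the Caccioppoli inequality the shell form of \eqref{e:poincare-inequality}, namely $\int_{\{t<r<t+1\}}|\omega|^2\,\dvol_{\widehat{\fM}}\le C\,V(t)\int_{\{t<r<t+1\}}|\nabla\omega|^2\,\dvol_{\widehat{\fM}}$, together with $\int_{\{r>t\}}r^{-2}V^{-2}|\omega|^2\,\dvol_{\widehat{\fM}}\le C t^{-2}\int_{\{r>t\}}|\nabla\omega|^2\,\dvol_{\widehat{\fM}}$, obtained by summing the shell Poincar\'e inequality over the shells $\{t+k<r<t+k+1\}$ and using $V\ge 1$. Writing $G(t)=\int_{\{r>t\}}|\nabla\omega|^2\,\dvol_{\widehat{\fM}}$, the two factors of $V(t)$ cancel and one gets, for $t$ sufficiently large,
\begin{align*}
G(t+1)\le C_0\bigl(G(t)-G(t+1)\bigr)+C_1 t^{-2}\,G(t),\qquad\text{hence}\qquad G(t+1)\le \frac{C_0+C_1 t^{-2}}{1+C_0}\,G(t).
\end{align*}
Since the right-hand ratio tends to $C_0/(1+C_0)<1$, it is bounded by a fixed $\theta<1$ for $t$ past some threshold; iterating over integer values of $t$ gives $G(t)\le Ce^{-\epsilon_0 t}$, and one more application of the shell Poincar\'e inequality upgrades this to $\int_{\{r>t\}}|\omega|^2\,\dvol_{\widehat{\fM}}\le Ce^{-\epsilon_0' t}$ for any $\epsilon_0'<\epsilon_0$, absorbing the harmless factor $V(t)=O(\log t)$. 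To conclude, I would pass to unit balls in the rescaled metric $\tilde g=100\,d^{-2}g^{\widehat{\fM}}$ of Proposition~\ref{p:weighted-Sobolev}, on which $\omega$ remains harmonic and the geometry is uniformly controlled: interior elliptic estimates and Sobolev embedding bound $|\omega|(x)$ by a factor that is at most polynomial in $\fs(x)$ times the $L^2$-norm of $\omega$ over a $g^{\widehat{\fM}}$-ball about $x$ contained in $\{r>r(x)/2\}$, and the exponential energy decay just established then gives $|\omega|(x)=O(e^{-\epsilon_0 r(x)})$ after absorbing the polynomial prefactor into a slightly smaller rate. The point requiring the most care is exactly the cancellation of the two powers of $V(t)$ — the $V(t)^{-1}$ produced by $|d\phi|^2_{g^{\widehat{\fM}}}$ in the Caccioppoli step against the $V(t)$ in the constant of \eqref{e:poincare-inequality} — since without it one obtains only polynomial rather than exponential decay; checking that \eqref{e:poincare-inequality} genuinely localizes to thin shells, and the weight and far-cutoff bookkeeping, are the remaining routine points.
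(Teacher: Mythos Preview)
Your proof is correct and follows the same broad pattern as the paper's---a cutoff argument on thin shells, fed by the fiberwise Poincar\'e inequality \eqref{e:poincare-inequality}, gives a recursive inequality of the form $\text{(tail at }t+\text{const)} \le \theta\cdot\text{(tail at }t)$ with $\theta<1$, and then elliptic regularity plus Sobolev embedding upgrades the integral decay to pointwise decay.

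Where you differ is in the mechanism that produces the recursion. The paper does not run a direct Caccioppoli estimate; instead it applies the already-established weighted $L^2$ estimate \eqref{e:Noninvariant-improved-L2}, namely $\|\eta\|_{L^2_\mu}^2 \le C R^{-4}\|\Delta\eta\|_{L^2_{\mu-2}}^2$, to $\eta=\varphi_R\omega$. Since $\omega$ is harmonic, $\Delta(\varphi_R\omega)$ is supported in a unit shell, and tracking the weights one obtains $\int_{r\ge R+3}|\omega\cdot\varrho_\mu|^2 \le C_0\int_{R\le r<R+3}|\omega\cdot\varrho_\mu|^2$, hence the contraction $\int_{r\ge R+3}\le \tfrac{C_0}{C_0+1}\int_{r\ge R}$. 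Your route is more self-contained: you bypass \eqref{e:Noninvariant-improved-L2} and work directly from Weitzenb\"ock plus the shell version of \eqref{e:poincare-inequality}. The price is the bookkeeping you flag---the cancellation of the $V(t)^{-1}$ from $|d\phi|^2_{g^{\widehat{\fM}}}$ against the $V(t)$ in the Poincar\'e constant---whereas the paper has already absorbed this into its weighted machinery. Both approaches rest on exactly the same underlying estimate (the fiberwise circle Poincar\'e), so neither is more general; yours is arguably more transparent, the paper's is shorter given what precedes it.
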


\begin{proof}
Let $R_0$ be the constant $R$ in Proposition \ref{p:weighted-estimate-model-space} depending on $\kappa_0$, $\nu$, and $\mu$. If $\eta$ is a smooth form on $\widehat{\fM}$ with compact support on a subset of $\{r>R_0\}$, \eqref{e:weighted-W^{2,2}} and \eqref{e:Noninvariant-L2} imply that
\begin{equation}
\label{e:Noninvariant-improved-L2}
\|\eta\|_{L_{\mu}^2(\widehat{\fM})}^2 \leq \frac{C}{R^4} \cdot \|\Delta\eta\|_{L_{\mu-2}^2(\widehat{\fM})}^2.
\end{equation}
For any $R>R_0$, let $\varphi_R$ be a cut-off function on $\dR^2$ such that
 \begin{align}
 \varphi_R =
 \begin{cases}
 	1, & r\geq R + 2,
 	\\
 	0, & r\leq R+1,
 \end{cases}	
 \end{align}
and $|\nabla \varphi_R|_{\dR^2}+ |\nabla^2 \varphi_R|_{\dR^2} \leq C$. We still denote by $\varphi_R$ the lifting of $\varphi_R$ to the model space $\widehat{\fM}$. 
Then it holds that
$|\nabla \varphi_R|_{\widehat{\fM}}^2 + |\nabla^2\varphi_R|_{\widehat{\fM}}\leq C\cdot V^{-1}$.	
 Since $\varphi_R \omega$ can be approximated by compactly supported smooth forms in the $W^{k,2}_\mu$-norm, \eqref{e:Noninvariant-improved-L2} also holds for $\varphi_R\omega$.
Using \eqref{e:Noninvariant-improved-L2}, we estimate
\begin{equation}
\begin{split}
\int_{r \geq   R + 3} |\omega\cdot  \varrho_{\mu}|^2 \dvol_{\widehat{\fM}} &\leq \frac{C}{R^4} \cdot  \int_{r \geq   R + 1}|\Delta (\varphi_R \omega) \cdot \varrho_{\mu-2}|^2\dvol_{\widehat{\fM}}  \\
&\leq C R^{-2\mu-2} V^{-\mu+1} \cdot \int_{R + 1 \leq  r \leq  R + 2} |\Delta (\varphi_R \omega)|^2 \dvol_{\widehat{\fM}}
\\
&\leq C R^{-2\mu-2} V^{-\mu-1} \cdot\int_{R + 1 \leq  r \leq  R + 2} ( |\omega|^2 + |\nabla \omega|^2 \cdot V) \dvol_{\widehat{\fM}}
\\
&\leq C R^{-2\mu-2} V^{-\mu-1} \cdot\int_{R \leq  r \leq  R + 3} |\omega|^2 \dvol_{\widehat{\fM}}
\\
&\leq C_0\cdot\int_{R \leq  r < R + 3}  |\omega|^2\cdot \varrho_{\mu}^2 \dvol_{\widehat{\fM}}.
\end{split}
\end{equation}
Notice that the fourth inequality follows from Proposition \ref{p:weighted-Sobolev}. Therefore, \begin{equation}\int_{r \geq   R + 3} |\omega\cdot  \varrho_{\mu}|^2 \dvol_{\widehat{\fM}}  \leq \frac{C_0}{C_0+1} \cdot \int_{r \geq   R} |\omega\cdot  \varrho_{\mu}|^2 \dvol_{\widehat{\fM}},
\end{equation}
which implies that $\int_{r\geq R} |\omega \cdot \varrho_{\mu}|^2 \dvol_{\widehat{\fM}} = O(e^{-\epsilon_0\cdot R})$ for some $\epsilon_0>0$ as $R \to \infty$. Therefore, $\omega$ also decays exponentially by Proposition \ref{p:weighted-Sobolev} and Proposition \ref{p:Sobolevemb}. 
\end{proof}

\section{Weighted analysis on ALG$^*$ manifolds}
\label{s:weighted}
We next transfer the previous estimates on the model space to any ALG$^*$ manifold. Without loss of generality, by scaling we can assume that the parameter $L = 1$ in Definition~\ref{d:ALGstar}.

\begin{definition}
[Weighted Sobolev norms] Let $(X, g)$ be an $\ALG^*_{\nu}$-$\Gamma$ manifold for $\nu \in \ZZ_+$ together with a diffeomorphism 
$\Phi: \widehat{\fM}_{\nu}(R) /\Gamma \longrightarrow X \setminus X_R$.	
 For any fixed parameters $\kappa_0\in\dR$, and $\mu\in\dR$, we define the weight function $\hat{\varrho}_{\mu}$ on $X$,
 \begin{align}
\hat{\varrho}_{\mu}(\bx)\equiv
\begin{cases}
	1, & r\leq 2R,
	\\
	\varrho_{\mu}(\Phi^{-1}(\bx)), & r\geq 3R,
\end{cases}	
\end{align}
where $R$ is the radius as in Definition \ref{d:ALGstar}, and $\varrho_{\mu}$ is the weight function on $\fM = \widehat{\fM} / \Gamma$ as in Definition \ref{d:model-weighted-norm}. 
Then the weighted Sobolev norms are defined as follows:
\begin{align}
 \|\omega\|_{L^2_{\mu}(X)} \equiv \Big(\int_{X} |\omega\cdot \hat{\varrho}_{\mu}|^2 \dvol_{X}\Big)^{\frac{1}{2}},\quad 
 \|\omega\|_{W^{k,2}_{\mu}(X)} \equiv  \Big(\sum_{m=0}^{k} \|\nabla^m\omega\|_{L_{\mu-m}^2(X)}^2\Big)^{\frac{1}{2}}.\end{align}
\end{definition}

\begin{remark}Notice that Proposition \ref{p:weighted-Sobolev} and Proposition \ref{p:Sobolevemb} are stated on the ALG$^*$ model space $\widehat{\fM}$. Since the weighted norms on an ALG$^*$ gravitational instanton $X$ and its model space $\widehat{\fM}$ differ by a uniform multiplicative constant, in our applications, we may also quote these two propositions for ALG$^*$ manifolds. 
\end{remark}

We need the following notations on an ALG$^*$ manifold $(X, g)$ and its asymptotic model $(\fM, g^{\fM})$. 
\begin{definition} Let $\mathcal{H}_{\mu}^p(X)$ be the space of all smooth $p$-forms $\omega$ on $(X, g)$ that satisfy $\Delta_{X}\omega=0$  and $\|\omega\|_{L^2_{\mu}(X)}<\infty$, where $\Delta_{X}$ is the Hodge Laplacian on $(X, g)$.  \end{definition}

\begin{definition}
\label{d:Z}
 Given $p\in\{0,1,2\}$ and $\fq\in\dZ$, let $\mathcal{Z}_{\fq}^p(\widehat{\fM})$ be the linear space of $p$-forms with a basis $\{u_{\fq,i}\cdot e^{\i \cdot m_{\fq,i} \cdot \t1}:1\leq i\leq \dim\mathcal{Z}_{\fq}^p(\widehat{\fM})\}$,
where for each $i$,
\begin{align}
	u_{\fq,i} = \sum\limits_I \omega_{I, i}(r) E^I, 
\end{align} $\omega_{I,r}(r)$ is a radial function,
and $I\subset \{1,2,3,4\}$ satisfies $|I| = p$.
For any $p\in\{0,1,2\}$ and $\fq\in\dZ$, the linear space $\mathcal{Z}_{\fq}^p(\widehat{\fM})$ is characterized as follows.
\begin{itemize}
\item When $p=0$, the basis is defined in Lemma~\ref{l:Z0}
\item When $p=1$, the basis of $\mathcal{Z}_{\fq}^1(\widehat{\fM}) \equiv \mathcal{Z}_{\fq}^{1,\I}(\widehat{\fM}) \oplus \mathcal{Z}_{\fq}^{1,\II}(\widehat{\fM})$ is defined in Lemma~\ref{l:Z1I} and Lemma~\ref{l:Z1II}
\item When $p=2$, the basis of $\mathcal{Z}_{\fq}^2(\widehat{\fM}) \equiv \mathcal{Z}_{\fq}^{2,+}(\widehat{\fM}) \oplus \mathcal{Z}_{\fq}^{2,-}(\widehat{\fM})$ is defined in \eqref{e:Z2+} and Lemma~\ref{l:Z2-}.
\end{itemize}

\end{definition}

Our main result in this section is the following Fredholm package for the Hodge Laplacian on $\ALG^*$ manifolds with respect to the weighted Sobolev norms. 
\begin{proposition}
\label{Weighted-analysis-X}
Let $(X, g)$ be an $\ALG^*_{\nu}$-$\Gamma$ manifold of order $\mathfrak{n} > 0$. Then the following properties hold.
\begin{enumerate}

\item  For any $\mu  \in \dR\setminus\ZZ$ and $k\in\dN_0$, there exist constants $R(X,\mu)>0$ and $C(X, \mu, k)>0$ such that for any $p$-form $\omega\in W^{k+2,2}_\mu(X)$,
\begin{align}
\|\omega\|_{W^{k+2,2}_{\mu}(X)} \leq C \cdot (\| \Delta_X \omega\|_{W^{k,2}_{\mu - 2}(X)} + \|\omega\|_{L^2(\{r \le 3R\}\subset X)}).
\label{e:Weighted-analysis-X}
\end{align}

\item  For any $\mu  \in\dR\setminus \ZZ$ and $k\in\dN_0$, the operator 
\begin{equation}
\Delta_{X}: W^{k+2,2}_\mu (X) \to W^{k,2}_{\mu-2}(X)
\end{equation}
is a Fredholm operator.
 Then for any $p$-form $\omega\in W^{k,2}_{\mu-2}(X)$, 
\begin{equation}
\Delta_{X} \xi=\omega
\end{equation} has a solution $\xi\in W^{k+2,2}_\mu(X)$ if and only if for all $\eta\in \mathcal{H}_{-\mu}^p(X)$,
\begin{align}
\int_{X} (\omega,\eta) \dvol_g = 0.
\end{align}

\item For any $\mu  \in\dR\setminus \ZZ$, $k\in\dN_0$, and $p$-form $\omega\in  W^{k,2}_{\mu}(X)$, there exists some $\xi\in W^{k+2,2}_{\mu+2}(X)$ such that $\Delta_{X} \xi=\omega$ when $r\ge 2R$.

\item  Let $\mu  \in \dR\setminus \ZZ$ and $\delta\in(0,\min\{1,\mathfrak{n}\})$. Consider any form $\omega\in W^{k,2}_{\mu}(X)$ such that $\Delta_{X}\omega=0$ when $r\ge 2R$. If $\dZ\cap [\mu-\delta,\mu]=\emptyset$, then 
$\omega \in W^{k,2}_{\mu-\delta}(X)$.
If there is some $\fq \in \dZ\cap (\mu-\delta,\mu)$, then the pull back of $\omega$ to $\widehat{\fM}$ can be written as the sum of an element in $\mathcal{Z}_{\fq}^p(\widehat{\fM})$ and an element in $W^{k,2}_{\mu-\delta}(\widehat{\fM})$.
\end{enumerate}
\end{proposition}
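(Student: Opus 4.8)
The strategy is to deduce the Fredholm package on $X$ from the corresponding estimates on the model space $\widehat{\fM}$ established in Proposition~\ref{p:weighted-estimate-model-space} and Corollary~\ref{Exponential-decay}, by a standard ``gluing/cutoff plus perturbation'' argument, exploiting that the ALG$^*$ metric differs from the model metric by terms of order $O(\fs^{-\mathfrak{n}})$ and their derivatives. The four items are then proved more or less in order, with the first serving as the analytic backbone.

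\medskip
\noindent\textbf{Item (1): the basic elliptic estimate.}
First I would fix a large radius $R$ and a cutoff $\chi$ that is $0$ on $\{r \le 2R\}$ and $1$ on $\{r \ge 3R\}$. For $\omega$ supported near infinity, $\Phi_*(\chi\omega)$ lives on the model space, and since $\Phi^*g - g^{\fM} = O(\fs^{-\mathfrak{n}})$ with all derivatives, the operator $\Delta_X$ on $X\setminus X_R$ is a perturbation of $\Delta_{\fM}$ of relative size $O(\fs^{-\mathfrak{n}})$ in the weighted norms. Hence
\begin{align}
\|\Delta_X(\chi\omega) - \Delta_{\fM}(\chi\omega)\|_{W^{k,2}_{\mu-2}} \le C R^{-\mathfrak{n}}\,\|\chi\omega\|_{W^{k+2,2}_{\mu}},
\end{align}
so that applying the model-space estimate \eqref{e:Weighted-analysis} to $\chi\omega$ (valid since $\mu\notin\ZZ$) and absorbing the perturbation term for $R$ large gives control of $\|\chi\omega\|_{W^{k+2,2}_{\mu}}$ by $\|\Delta_X\omega\|_{W^{k,2}_{\mu-2}}$ plus commutator terms $[\Delta_X,\chi]\omega$, which are supported in $\{2R \le r \le 3R\}$ and hence bounded by $\|\omega\|_{W^{k+1,2}(\{r\le 3R\})}$; the latter is in turn controlled by $\|\Delta_X\omega\|_{L^2(\{r\le 4R\})} + \|\omega\|_{L^2(\{r\le 4R\})}$ by interior elliptic estimates. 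On the compact piece $\{r \le 3R\}$ one uses the ordinary elliptic estimate on a manifold with boundary (or on a slightly larger compact set), and \eqref{e:Weighted-analysis-X} follows by combining the two regions. This also immediately yields that $\Delta_X$ has finite-dimensional kernel and closed range on $W^{k+2,2}_\mu$ by the usual Rellich/parametrix argument.

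\medskip
\noindent\textbf{Items (2) and (3): Fredholmness, cokernel, and surjectivity at infinity.}
Closedness of the range plus the analogous estimate for $-\mu$ (also non-integral) shows $\Delta_X$ is Fredholm; the cokernel is identified with $\mathcal{H}^p_{-\mu}(X)$ by the standard weighted-duality pairing, using that $W^{k,2}_{\mu-2}(X)^* $ pairs against $W^{-k,2}_{-\mu}$-type spaces and that an $L^2_{-\mu}$ distributional solution of $\Delta_X\eta = 0$ is smooth by elliptic regularity. For item~(3) one solves $\Delta_X\xi = \omega$ on the model end: given $\omega \in W^{k,2}_\mu$, cut off to the end, push to $\widehat{\fM}$, and invert $\Delta_{\fM}$ there in the weight $\mu+2$ using \eqref{e:Weighted-analysis} (no integer obstruction since $\mu \notin \ZZ$, and the relevant indicial roots lie strictly between $\mu$ and $\mu+2$ only at integers, which are excluded), then push back and correct the commutator error, which is compactly supported and hence handled on the compact piece; the resulting $\xi$ solves the equation for $r \ge 2R$ as claimed. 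Here one must be slightly careful that the inverse on $\widehat{\fM}$ lands in $W^{k+2,2}_{\mu+2}$ and not merely $L^2$; this follows from the higher-order part of Proposition~\ref{p:weighted-estimate-model-space}.

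\medskip
\noindent\textbf{Item (4): the asymptotic expansion — the main obstacle.}
This is the hardest part. Given $\omega$ with $\Delta_X\omega = 0$ for $r \ge 2R$ and $\omega \in W^{k,2}_\mu$, I want to improve the weight to $\mu-\delta$, or extract a ``leading term'' from the explicit list $\mathcal{Z}^p_{\mathfrak{q}_0}(\widehat{\fM})$. The mechanism is the standard separation-of-variables / indicial-root analysis: decompose $\Phi^*\omega$ into a Fourier series in $(\theta_1,\theta_2,\theta_3)$ and a part with vanishing torus average. By Corollary~\ref{Exponential-decay} the non-torus-invariant part decays exponentially, hence lies in every $W^{k,2}_{\mu'}$, so it is negligible for the weight-shifting claim. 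For the torus-invariant part, $\omega$ depends only on $r,\theta_1$; expanding in $e^{\i j\theta_1}$ and using \eqref{Expansion} (which makes $\Delta_X$ a small perturbation of $-V^{-1}L_{\RR^2,j}$), the component $\omega_{I,j}$ satisfies an ODE whose model is the Euler equation $\omega'' + r^{-1}\omega' - j^2 r^{-2}\omega \approx 0$, with indicial roots $\pm j$ translating (through the weight definition $\varrho_\mu = \fs^{-\mu-1}$ and $\fs \sim rV^{1/2}$) to the integer critical weights. Away from an integer weight, the weighted estimate \eqref{e:Weighted-analysis} applied to $\varphi_R\omega$ with the shifted weight $\mu-\delta$ lets one bootstrap decay — exactly as in the proof of Corollary~\ref{Exponential-decay}, but now the geometric series converges to an improvement of one full unit, and $\delta < 1$ ensures no integer is crossed. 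When an integer $\mathfrak{q}_0 \in (\mu-\delta,\mu)$ is present, one subtracts off the exact model solutions of order $\mathfrak{q}_0$ (these are the elements of $\mathcal{Z}^p_{\mathfrak{q}_0}(\widehat{\fM})$, listed via \eqref{e:Harmonic-function}, \eqref{e:Harmonic-type-I}, \eqref{e:Harmonic-type-II}, \eqref{Expansion-2-form-self-dual}, \eqref{e:Harmonic-omega1}, \eqref{e:Harmonic-omega23-start}–\eqref{e:Harmonic-omega23-end}): one shows the obstruction to further decay is precisely the projection of $\omega$ onto this finite-dimensional space of model solutions, matches coefficients, and applies the weight-shifting argument to the remainder. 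The technical heart is controlling the error between the true ODE and its Euler model — the $O(r^{-1}V^{-3/2})$ and $O(r^{-2}V^{-2})$ terms in \eqref{Expansion} plus the $O(\fs^{-\mathfrak{n}})$ metric error — uniformly in the Fourier mode $j$, so that a single large $R$ works for all modes simultaneously; the logarithmic factor $V = \kappa_0 + \frac{\nu}{2\pi}\log r$ makes the ``$\fs$ versus $r$'' bookkeeping delicate, and this is where I expect most of the work to go.
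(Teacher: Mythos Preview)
Your treatment of items (1)--(3) matches the paper's proof essentially line for line: cutoff near infinity, transfer to the model via the $O(\fs^{-\mathfrak n})$ perturbation estimate, absorb the error for $R$ large, and combine with interior elliptic estimates on the compact piece; Fredholmness and the duality identification of the cokernel then follow by the standard argument you sketch.

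For item (4), however, the paper takes a cleaner route than your direct bootstrap. Rather than analyzing the $\Delta_X$-harmonic form $\omega$ itself, the paper first notes that $\Delta_{\fM}(\Phi^*\omega) \in W^{k,2}_{\mu-\delta-2}$ (since $|\Phi^*g - g^{\fM}| = O(\fs^{-\delta})$ with $\delta \le \mathfrak n$), and then invokes item (3) \emph{on the model} to produce $\eta \in W^{k+2,2}_{\mu-\delta}$ with $\Delta_{\fM}\eta = \Delta_{\fM}(\Phi^*\omega)$ for $r \ge 2R$. The difference $\zeta = \Phi^*\omega - \eta$ is then \emph{exactly} $\Delta_{\fM}$-harmonic near infinity. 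Now Corollary~\ref{Exponential-decay} applies directly to its non-torus-invariant part, and the torus-invariant part admits an exact separation-of-variables expansion in the explicit model solutions of the appendix --- no perturbative ODE analysis, no bootstrapping, and no uniformity-in-$j$ issue.

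Your approach has a genuine gap at the point where you invoke Corollary~\ref{Exponential-decay} for the non-torus-invariant part of $\Phi^*\omega$: that corollary is stated for $\Delta_{\fM}$-harmonic forms, whereas $\Phi^*\omega$ is only $\Delta_X$-harmonic, and the metric $g$ is not assumed $T^2$-invariant, so the torus-averaging projection does not even commute with $\Delta_X$. You could in principle repair this by re-proving the exponential-decay lemma with the metric perturbation included (and similarly carry the $O(\fs^{-\mathfrak n})$ error through the ODE analysis for the invariant part), but this is exactly the ``uniform in $j$'' headache you flag at the end --- and the paper's trick of first passing to an exact model-harmonic form sidesteps it entirely.
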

  
\begin{proof} Recall that, by definition, the $\ALG^*_{\nu}$-$\Gamma$ manifold $(X,g)$ is equipped with a diffeomorphism $\Phi:\widehat{\fM}_{\nu}(R) /\Gamma \rightarrow X \setminus X_R$ such that  $|\Phi^*g - g^\fM | =O(\fs^{-\mathfrak{n}})$. For the simplicity of notation, we do not distinguish the function $\fs$ defined on $\widehat{\fM}_{\nu}(R)$  with the function $\fs\circ\Phi^{-1}$ defined on $X\setminus X_R$.

For (1), for any $p$-form $\omega$ with compact support in $\{r>R\}\subset \fM$, we have that
\begin{align}
(1+O(\fs(R)^{-\mathfrak{n}}))^{-1} \|\omega\|_{W^{k+2,2}_{\mu}(\fM)} \leq \|\omega\|_{W^{k+2,2}_{\mu}(X)}\leq (1+O(\fs(R)^{-\mathfrak{n}})) \|\omega\|_{W^{k+2,2}_{\mu}(\fM)}.
\end{align}
Then we will not distinguish $W^{k+2,2}_{\mu}(\fM)$ with $W^{k+2,2}_{\mu}(X)$.
Moreover,
\begin{align}
\|\Delta_{X}\omega\|_{W^{k,2}_{\mu-2}(X)}-\| \Delta_\fM \omega\|_{W^{k,2}_{\mu-2}(X)} \leq C \cdot \fs(R)^{-\mathfrak{n}}\cdot \|\omega\|_{W^{k+2,2}_{\mu}(X)}.
\end{align}
We also identify a form on $\fM$ with a $\Gamma$-invariant form on $\widehat{\fM}$.
By \eqref{e:Weighted-analysis}, 
 for $R$ sufficiently large, we have
\begin{equation}
\|\omega\|_{W^{k+2,2}_\mu} \leq C \|\Delta_{X} \omega\|_{W^{k,2}_{\mu - 2}}.
\end{equation}
 Let $\chi$ be a cut-off function on $\RR^2$ with $\Supp(\chi)\subset B_{2R}$ and $\chi \equiv 1$ in $B_R$. If $\omega\in W^{k+2,2}_\mu(X)$, then $(1-\chi)\omega$ can be approximated by smooth forms with compact support in $\{r>R\}$. Therefore,
\begin{align}
\begin{split}
\|\omega\|_{W^{k+2,2}_\mu(X)} & \leq \|\chi\omega\|_{W^{k+2,2}_\mu(X)} + \|(1-\chi)\omega\|_{W^{k+2,2}_\mu(X)} \\
&\leq C (\|\omega\|_{W^{k+2,2}_\mu(\{r \le 2R\}\subset X)} + \|\Delta_{X} ((1-\chi)\omega)\|_{W^{k,2}_{\mu-2}(X)} ) \\
&\leq C (\|\omega\|_{L^2(\{r \le 3R\}\subset X)} + \|\Delta_{X} \omega\|_{W^{k,2}(\{r \le 3R\}\subset X)} + \|\Delta_{X} \omega\|_{W^{k,2}_{\mu-2}(X)} ) \\
&\leq C (\|\omega\|_{L^2(\{r \le 3R\}\subset X)} + \|\Delta_{X} \omega\|_{W^{k,2}_{\mu-2}(X)}).
\end{split}
\end{align}

For (2), it is straightforward to check that for any $k\in\dN_0$ and any $p$-form $\omega\in W^{k+2,2}_\mu(X)$, we have
\begin{equation}
\|\Delta_{X} \omega\|_{W^{k,2}_{\mu-2}(X)} \leq C \cdot \|\omega\|_{W^{k+2,2}_\mu(X)}.
\end{equation}

First, to prove $\dim(\ker(\Delta_{X}))<\infty$, let us take any sequence $\omega_j\in \ker(\Delta_{X})$ with \begin{equation}
\|\omega_j\|_{W^{k+2,2}_\mu(X)}=1.
\end{equation}
Rellich's theorem implies that the inclusion 
\begin{equation}
W^{k+2,2}_\mu(X)\hookrightarrow L^2(\{r \le 3R\}\subset X)
\end{equation} is compact. Combining this and \eqref{e:Weighted-analysis-X}, a subsequence of $\omega_j$ converges to $\omega_{\infty}$ in the $W^{k+2,2}_\mu$-norm. Then the unit sphere of $\ker(\Delta_{X})$ is compact, and hence 
\begin{equation}
\dim(\ker(\Delta_{X}))<\infty.
\end{equation}
Moreover, standard elliptic estimate implies 
\begin{equation}
\ker(\Delta_{X})=\mathcal{H}^{p}_{\mu}(X).
\end{equation}

Next, we will show 
$\Image(\Delta_{X})$ is closed in the $W^{k,2}_{\mu-2}$-norm. This follows from the claim that for any $p$-form $\omega\in (\mathcal{H}_{\mu}^{p}(X))^{\perp}\subset W^{k+2,2}_{\mu}(X)$ in terms of the $W^{k+2,2}_\mu$-inner product, we have
\begin{align}
\|\omega\|_{W^{k+2,2}_\mu(X)} \leq C \| \Delta_{X} \omega\|_{W^{k,2}_{\mu-2}(X)}.
\label{e:Weighted-analysis-mod-kernel}
\end{align}
Suppose not, then there exists a sequence of $\omega_j\in (\mathcal{H}_{\mu}^{p}(X))^{\perp}$ such that 
\begin{align}
\|\omega_j\|_{W^{k+2,2}_\mu(X)}=1  \quad \text{and} \quad  
 \|\Delta_{X} \omega_j\|_{W^{k,2}_{\mu}(X)}\to 0.
\end{align}
Combining Rellich's theorem and \eqref{e:Weighted-analysis-X}, passing to a subsequence, we have a limit 
\begin{equation}
\omega_{\infty}\in \mathcal{H}_{\mu}^p(X)\subset  W^{k+2,2}_\mu(X).
\end{equation}
Since $\omega_{\infty}\in (\mathcal{H}_{\mu}^p(X))^{\perp}$, we have $\omega_{\infty} = 0$.

Now we show the solvability criterion. This implies that $\dim(\coker(\Delta_{X}))<\infty$. When $k=0$, we can characterize $\Image(\Delta_{X})$ in $L^2_{\mu-2}(X)$ by finding its orthogonal complement in terms of the $L^2_{\mu-2}(X)$-inner product. Notice that $\tilde\eta\perp \Image(\Delta_{X})$ if and only if
\begin{align}
\int_{X} (\Delta_{X} \xi, \tilde\eta) \cdot (\hat{\varrho}_{\mu-2})^2\dvol_g = 0\quad \forall \xi \in W^{2,2}_\mu(X).
\end{align}
This is identical to 
\begin{equation}
\Delta_{X}((\hat{\varrho}_{\mu-2})^2\cdot \tilde\eta)=0
\end{equation}
 in the distributional sense. By standard elliptic regularity, this also coincides with the condition that
\begin{equation}
\eta \equiv (\hat{\varrho}_{\mu-2})^2\cdot  \tilde\eta \in \mathcal{H}_{-\mu}^p(X).
\end{equation}
 Then the above implies that $\omega\in  L^2_{\mu-2}(X)$ satisfies $\Delta_{X} \xi = \omega$ for some $\xi\in W_{\mu}^{2,2}(X)$ if and only if for any $\eta\in \mathcal{H}_{-\mu}^p(X)$,
\begin{align}
 \int_X (\omega, \eta) \dvol_g = 0.
\end{align}
When $k\in\dZ_+$, the proof follows from the standard elliptic regularity theory.

The proof of (3) is similar to the proof of \cite[Theorem~4.4]{CCII}, so is omitted.
To prove (4), for any $\omega\in\mathcal{H}_{\mu}^p(X)$, by the asymptotics 
\begin{equation}
|\Phi^*g - g^\fM| = O(\fs^{-\delta}),
\end{equation} we find that $\Delta_\fM \omega \in W^{k,2}_{\mu-\delta-2}(X)$ for all $k\in\dN_0$.
By (3), there is a solution $\eta\in W^{k+2,2}_{\mu-\delta}(X)$ such that 
$\Delta_\fM \eta = \Delta_\fM \omega$
when $r \ge 2 R$. Then $\Delta_\fM (\eta-\omega)=0$ when $r \ge 2 R$. 
Let us pull back $\eta-\omega$ to $\widehat{\fM}$ and write it as $\zeta' + \zeta''$, where each coefficient function of $\zeta'$ has zero $\dT^2$-average, and $\zeta''$ is $\dT^2$-invariant and hence its coefficient functions depend only on $(r,\theta_1)$. By Corollary \ref{Exponential-decay}, $\zeta'$ decays exponentially. 

The next step is to analyze the $\dT^2$-invariant form $\zeta''$. Using separation by variables in the coordinates $(r,\theta_1)$ from Section~\ref{s:model-metric}, we observe that the Fourier expansion of $\zeta''$ can be written as 
\begin{align}
\zeta''=\sum_{j=-\infty}^{\infty} \sum_{i=1}^{\dim \mathcal{Z}_j^p(\widehat{\fM})} c_{j,i} \cdot \omega_{j,i},\label{e:irreducible-coefficients}
\end{align}
where  $\{\omega_{j,i}\}_{1\leq i\leq \dim \mathcal{Z}_j^p(\widehat{\fM})}$ is a basis of $\mathcal{Z}_j^p(\widehat{\fM})$.

First, since $\zeta'' \in L^2_{\mu}(\widehat{\fM})$, the $e^{\i \cdot j \cdot \t1}$ and $e^{-\i \cdot j \cdot \t1}$ components of $\zeta''$ are also in $L^2_{\mu}(\widehat{\fM})$. This implies that $c_{j,i}=0$ for all $j > \mu$. Next, we define the integer $\fq \equiv \ulcorner (\mu -\delta) \urcorner$ and the form
\begin{equation}
\hat{\zeta} \equiv \sum_{j=-\infty}^{\fq-1} \sum_{i=1}^{\dim \mathcal{Z}_j^p(\widehat{\fM})} c_{j,i} \cdot u_{j,i} \cdot e^{\i \cdot m_{j,i}\cdot \theta_1}.
\end{equation}
If $\fq < \mu$, then 
we write $\zeta'' = \zeta_{\fq} + \hat{\zeta}$, where 
\begin{equation}
\zeta_{\fq} \equiv \sum_{i=1}^{\dim \mathcal{Z}_{\fq}^p(\widehat{\fM})} c_{\fq,i} \cdot u_{\fq,i} \cdot e^{\i \cdot m_{\fq,i}\cdot \theta_1} \in \mathcal{Z}_{\fq}^p(\widehat{\fM}).
\end{equation}
If $\fq > \mu$, then $\hat{\zeta}=\zeta''$. 

In the following, we will prove $\hat{\zeta}\in W_{\mu-\delta}^{k,2}(\widehat{\fM})$. A technical issue is that, for example, when $p=0$, 
\begin{equation}
\int_{r=t} (r^k e^{\i \cdot k \cdot \t1}, r^{-k} e^{\i \cdot k \cdot \t1}) d\t1 \wedge d\t2 \wedge \Theta \not= 0.
\end{equation}
To solve this issue, we define
\begin{equation}
\hat{\zeta}_1 \equiv \sum_{j=-\infty}^{-|\fq|-10} \sum_{i=1}^{\dim \mathcal{Z}_j^p(\widehat{\fM})} c_{j,i} \cdot u_{j,i}  \cdot e^{\i \cdot m_{j,i}\cdot \theta_1}.
\end{equation}
Then $\hat{\zeta} = \hat{\zeta}_1 + \hat{\zeta}_2$, where
\begin{equation}
\hat{\zeta}_2 = \sum_{j=-|\fq|-9}^{\fq-1} \sum_{i=1}^{\dim \mathcal{Z}_j^p(\widehat{\fM})} c_{j,i} \cdot u_{j,i}  \cdot e^{\i \cdot m_{j,i}\cdot \theta_1} \in W_{\mu-\delta}^{k,2}(\widehat{\fM}).
\end{equation}
The main improvement is the components of $\hat{\zeta}_1$ are orthogonal to each other. Therefore,
\begin{equation}
\int_{r=t} (\hat{\zeta}_1, \hat{\zeta}_1) d\t1 \wedge d\t2 \wedge \Theta = 8 \pi^3 \cdot \sum_{j=-\infty}^{-|\fq|-10} \sum_{i=1}^{\dim \mathcal{Z}_j^p(\widehat{\fM})} |c_{j,i}|^2 \cdot |u_{j,i}|^2.
\end{equation}

We only need to show that $\hat{\zeta}_1\in W_{\mu-\delta}^{k,2}(\widehat{\fM})$ using the information that $\hat{\zeta}_1\in W_{\mu}^{k,2}(\widehat{\fM})$. To this end, let us fix a large number $r_0\gg 1$.
By Definition \ref{d:Z}, there is a constant $C=C(r_0,\kappa_0, \nu, \mu)>0$ and constants $-2 \le \mathfrak{b}_{j,i} \le 2$ such that for any $j\leq -|\fq| - 10$ and $t > r_0$, we have
\begin{align}
C^{-1} \cdot V^{\mathfrak{b}_{j,i}}(t) \cdot t^j \le |u_{j,i}|(t) \le C \cdot V^{\mathfrak{b}_{j,i}}(t) \cdot t^j.
\end{align}
Therefore, for any $j\leq -|\fq| - 10$, $t_1 > 4 r_0$, and $t_2\in(r_0,2r_0)$,
\begin{align}
\frac{|u_{j,i}|(t_1)}{|u_{j,i}|(t_2)} \leq C \cdot \Big(\frac{V(t_1)}{V(t_2)}\Big)^{\mathfrak{b}_{j,i}} \cdot  \Big(\frac{t_1}{t_2}\Big)^j \le  C \cdot \Big(\frac{V(t_1)}{V(t_2)}\Big)^{2} \cdot  \Big(\frac{t_1}{t_2}\Big)^{-|\fq|-10} \le C \cdot V^{2}(t_1) \cdot  t_1^{-|\fq|-10}.
\end{align}
Note that the constants $C=C(r_0,\kappa_0, \nu, \mu)>0$ are allowed to change line by line. So
\begin{align}
\begin{split}
&\ \int_{r=t_1} (\hat{\zeta}_1, \hat{\zeta}_1) d\t1 \wedge d\t2 \wedge \Theta \\
  \leq & \ C \cdot V^4(t_1) \cdot  t_1^{-2|\fq|-20}	\int_{r_0}^{2r_0} \Big(\int_{r=t_2}(\hat{\zeta}_1, \hat{\zeta}_1) d\t1 \wedge d\t2 \wedge \Theta \Big) d t_2 \\
  \leq &\ C \cdot V^4(t_1) \cdot  t_1^{-2|\fq|-20} \|\hat{\zeta}_1\|^2_{L^2_{\mu}(\widehat{\fM})}.
\end{split}
\end{align}
Then  $\hat{\zeta}_1\in L^2_{\mu-\delta}(\widehat{\fM})$, and the higher order estimate follows from the standard elliptic regularity. 
 \end{proof}

\section{Applications}
In this section, we will apply the above results to prove Theorem~\ref{t:ehf}, Corollary~\ref{t:non-existence},
Theorem~\ref{t:h1}, Corollary~\ref{t:h10}, and Theorem~\ref{t:sharp-order-intro} from the introduction.

\subsection{Existence of harmonic functions}

\begin{proof}[Proof of Theorem \ref{t:ehf}]
From Subsection \ref{ss:a1I}, we see that the function $r^k e^{\i k \theta_1}$ is a harmonic function on the model space $\widehat{\fM}$. Moreover, it descends to $\fM$ by our assumption. Let $\psi$ be a cut-off function on $X$ such that
\begin{align}
\psi
=
\begin{cases}
1, & s \ge 2R_0,
\\
0, & s \le R_0.
\end{cases}	
\end{align}
Since 
\begin{equation}
|\nabla_{g^{\widehat{\fM}}}^l(\Phi^*g- g^{\widehat{\fM}})| = O(\fs^{-l-\mathfrak{n}}) \quad \text{as}\ \fs \to \infty 
\end{equation}
for all $l\in\dN_0$, and $\fs(x)\equiv r(x)\cdot V^{\frac{1}{2}}(x)$,
we have
\begin{equation}
\Delta_g (\psi \cdot r^k e^{\i k \theta_1}) \in W_{\mu-2}^{l,2}(X)
\end{equation}
for any $l\in\dN_0$ and $\mu > k - \mathfrak{n}$. We can choose $\mu$ such that in addition, $\max\{0, k - \mathfrak{n}\}< \mu < k-\epsilon$ and $\mu$ is not an integer. Using the maximum principle, we have $\mathcal{H}_{-\mu}(X)=\{0\}$.
Applying item (2) of Proposition \ref{Weighted-analysis-X}, there exists some $u\in W_{\mu}^{l+2,2}(X)$ which solves the equation
\begin{align}
\Delta_g u= - \Delta_g(\psi \cdot r^k e^{\i k \theta_1}).
\end{align}
Then
\begin{equation}
h_k \equiv \psi \cdot r^k e^{\i k \theta_1} + u
\end{equation}
is harmonic with respect to $g$. Proposition \ref{p:Sobolevemb} then implies \eqref{e:hkexp}.
\end{proof}

\begin{proof}[Proof of Corollary \ref{t:non-existence}]
Recall that
\begin{equation}
|d (r e^{\i\theta_1})|_{g^{\fM}} = \sqrt{2} \cdot V^{-1/2} \to 0,
\end{equation}
as $\fs \to \infty$. This  implies that $|d h_1|_g = o(1)$ as $s\to \infty$, where $s(x)\equiv d_g(x,x_0)$ and $x_0$ is a fixed point in some compact subset $X_R \subset X$. Since $\Ric_g \geq 0$ and $d h_1$ is harmonic, by Bochner's formula we have
\begin{equation}
\Delta_g (|d h_1|_g^2) \le 0.
\end{equation}
Since $d h_1$ is not identically zero, the maximum of $|d h_1|_g^2$ would be achieved in the interior. The strong maximum principle then leads to a contradiction.
\end{proof}

\subsection{Hodge theory on ALG$^*$ manifolds}
\label{s:hodge}
\begin{proof}[Proof of Theorem \ref{t:h1}]

As in \cite[Lemma~6.11]{melrose}, we define a smooth function $f$ such that $f(r)=r$ when $r\le R$ and $f(r)=2R$ when $r\ge 2R$.
Therefore, the map $F : X \rightarrow X$ defined by 
\begin{align}
F(x) = 
\begin{cases}
\Phi( (f(r),\t1,\t2,\t3))   & \mbox{ if } x = \Phi(r, \t1,\t2,\t3)\\
x  & \mbox{ if } x \in X_R\\
\end{cases}
\end{align}
is well-defined and homotopic to the identity. Then any smooth closed 1-form $\eta_1$ on $X$ is cohomologous to $\eta_2\equiv F^*\eta_1$. Clearly, 
\begin{align}
\Phi^* F^* \eta_1 = a_1 d \theta_1 + a_2 d \theta_2 + a_3 d \theta_3,
\end{align}
on the set $r \ge 2 R$, where $a_1, a_2, a_3$ are constants. 
So then $\eta_2 \in  W^{k,2}_{\mu}(X)$ for all $k\in\dN_0$ and $0 < \mu \ll 1$. The mapping $v \mapsto (\eta_2, v)_{L^2(X)}$ is a bounded linear functional on  $\mathcal{H}^1_{-2-\mu}(X)$. By the Riesz representation theorem, there exists  $\eta_3\in \mathcal{H}^1_{-2-\mu}(X)$ such that $(\eta_2, v)_{L^2(X)} = (\eta_3, v)_{L^2(X)}$ for all $v \in \mathcal{H}^1_{-2-\mu}(X)$. By Proposition \ref{Weighted-analysis-X}, there exists an 1-form $\eta_4 \in W^{k+2,2}_{\mu+2}(X)$ such that
\begin{equation}
\eta_2 - \eta_3 = d \delta \eta_4 + \delta d \eta_4.
\end{equation}
The boundary term in the integral
\begin{equation}
\int_{r \le R} \Big((\eta_3,d \delta \eta_3 + \delta d \eta_3) - (d \eta_3, d \eta_3) - (\delta \eta_3, \delta \eta_3)\Big) \dvol_g
\end{equation}
goes to 0 when $R\to \infty$. So we see that $d \eta_3 = \delta \eta_3 =0$.
So $\eta_1$ is also cohomologus to the closed and co-closed 1-form $\eta_5\equiv\delta d \eta_4 \in W^{k,2}_{\mu}(X)$. 

So then $\eta_5$ is a harmonic $1$-form in $W^{k,2}_{\mu}(X)$ for any $\mu$ satisfying $0 < \mu \ll 1$. By Proposition \ref{Weighted-analysis-X}, $\eta_5$ admits a harmonic expansion. By Definition \ref{d:Z},  the leading term $\eta_6$ of the pull-back of $\eta_5$ to $\widehat{\fM}$ is given by 
\begin{align}
\label{eta6}
\eta_6 = (A_0 + B_0 V^2) dz + (A_0' + B_0' V^2) d \bar{z}
+ (C_0 + D_0 V^2) d \t2 + (C_0' V^{-1} + D_0' V) \Theta,
\end{align}
where $A_0, A_0', B_0, B_0', C_0, C_0', D_0, D_0'$ are constants and $z \equiv r e^{\i \t1}$. 
Since $\eta_5$ is both closed and co-closed, we have 
\begin{equation}
\label{e:almost-closed-coclosed}
|d \eta_6|_{g^{\widehat{\fM}}} + |\delta_{\widehat{\fM}} \eta_6|_{g^{\widehat{\fM}}} 
= O (\fs^{-1-\epsilon})
\end{equation}
for a constant $\epsilon>0$, as $\fs \to \infty$. Then \eqref{eta6} and \eqref{e:almost-closed-coclosed} imply that 
\begin{align}
\label{eta60}
\eta_6 = A_0 dz + A_0' d \bar{z} + C_0 d \t2.
\end{align}
By Theorem \ref{t:ehf}, there exists a harmonic function $h_1 : X \rightarrow \CC$ such that $h_1 = z + O(s^{-\epsilon})$  for some $\epsilon > 0$ as $s \to \infty$. 
Then the closed and co-closed 1-form
\begin{align}
\label{eta7}
\eta_7 = \eta_5 - A_0 dh_1 - A_0' d \bar{h}_1
\end{align}
is cohomologous to $\eta_1$. This shows that the natural mapping
\begin{equation}
\{ \omega \in \Omega^1(X) \ | \ d\omega =0, \delta \omega= 0, \Phi^*\omega = \omega_0 + O(\fs^{-\epsilon}) \mbox{ as } \fs \to \infty, \ \omega_0 \in \mathcal{W}^1\} \to H^1_{dR}(X)
\end{equation}
is surjective. To show the injectivity, assume that 
\begin{align}
\omega \in \{ \omega \in \Omega^1(X) \ | \ d\omega =0, \delta \omega= 0, \Phi^*\omega = \omega_0 + O(\fs^{-\epsilon}) \mbox{ as } \fs \to \infty, \ \omega_0 \in \mathcal{W}^1\}
\end{align}
satisfies $\omega = d u$ for a function $u : X \rightarrow \RR$. On the compact subset $\{r \le R \} \subset X$, $u$ is bounded. For each point $\Phi(r, \t1, \t2, \t3)$ on $\{r \ge R\}\subset X$, the path 
\begin{equation}
\gamma_{r, \t1, \t2, \t3} (t) \equiv \Phi(t, \t1, \t2, \t3), t \in [R,r],
\end{equation}
connects $\Phi(R, \t1, \t2, \t3)$ and $\Phi(r, \t1, \t2, \t3)$.
Using
\begin{equation}
u(\Phi(r, \t1, \t2, \t3)) = u(\Phi(R, \t1, \t2, \t3)) + \int_{\gamma_{r, \t1, \t2, \t3}} \omega,
\end{equation}
we have that 
$u = O(s^{1-\epsilon})$ as $s \to \infty$ because the integral of $(\Phi^{-1})^* d \t2$ on $\gamma_{r, \t1, \t2, \t3}$ is $0$. Since $\omega$ is co-closed, from Proposition  \ref{Weighted-analysis-X} 
and Definition \ref{d:Z}, $u$ admits a harmonic expansion 
\begin{align}
u = A_0 + B_0 V + O(s^{- \epsilon})
\end{align}
as $s \to \infty$. Integrating by parts, 
\begin{equation}
0 = \int_{r < R}\Big( (\delta d u, 1) - (d u, d 1) \Big) \dvol_g 
= \int_{r=R} \partial_n u  \ d\sigma_g,
\end{equation}
where $\partial_n$ is the derivative with respect to the unit normal, 
and $d \sigma_g$ is the induced area element. Clearly, 
\begin{align}
\Phi^* \partial_n u &= V^{-\frac{1}{2}} \partial_r (\Phi^* u) + O(\fs^{-1-\epsilon}),\\
\Phi^* d  \sigma_g &= r V^{\frac{1}{2}} d \t1 \wedge d \t2 \wedge \Theta +  O(\fs^{-\epsilon}),
\end{align} 
as $\fs \to \infty$.
This implies that $B_0 = 0$ by taking $R\to \infty$. So $u=A_0$ by the maximum principle. This implies that $\omega=0$, which completes the proof of the injectivity.
\end{proof}

\begin{proof}[Proof of Corollary \ref{t:h10}]
Let
\begin{align}
\omega \in \{ \omega \in \Omega^1(X) \ | \ d\omega =0, \delta \omega= 0, \Phi^*\omega = \omega_0 + O(\fs^{-\epsilon}) \mbox{ as } \fs \to \infty, \ \omega_0 \in \mathcal{W}^1\}.
\end{align}
Recall that
\begin{equation}
|d\theta_2|_{g^{\fM}} = V^{-1/2} = o(1), 
\end{equation}
as $\fs \to \infty$.  If $X$ has non-negative Ricci curvature, then the same argument as in the proof of Corollary \ref{t:non-existence} above would imply that $\omega \equiv 0$. Therefore $b^1(X) = 0$ by Theorem \ref{t:h1}.
\end{proof}

\subsection{Asymptotics of ALG$^*$ gravitational instantons}
\label{s:aALGstar}
We begin with a few remarks about hyperk\"ahler structures. Let $(M, g, I,J,K)$ be a hyperk\"ahler $4$-manifold. Recall that we denote the triple of $2$-forms as $\bm{\omega} = (\omega_1, \omega_2, \omega_3)$, where $\omega_1, \omega_2, \omega_3$ are the K\"ahler forms associated to $I, J, K$, respectively. 
These $2$-forms satisfy
\begin{align}
\label{e:HKtriple}
\omega_1 \wedge  \omega_2 = 0, \ \omega_1 \wedge  \omega_3 = 0,\
 \omega_2 \wedge  \omega_3 = 0, \ \omega_1 \wedge \omega_1 - \omega_2 \wedge \omega_2 =0,\ \omega_1 \wedge \omega_1 - \omega_3 \wedge \omega_3 = 0.
\end{align}
Conversely, any triple of symplectic forms $\omega_i$ satisfying \eqref{e:HKtriple} determines a hyperk\"ahler structure if we replace $\omega_3$ by $-\omega_3$ if necessary. To see this, using the algebraic isomorphism of homogeneous spaces
\begin{align}
\label{algiso}
\SO_0(3,3)/ (\SO(3) \times \SO(3)) \cong
\SL(4,\RR)/ \SO(4) 
\end{align}
(see for example \cite[Chapter~7]{Salamon}), a triple $\bm{\omega}$ uniquely determines a Riemannian metric $g_{\bm{\omega}}$ such that each $\omega_j$ is self-dual with respect to $g_{\bm{\omega}}$ and $\dvol_{g_{\bm{\omega}}} = \frac{1}{2} \omega_1 \wedge \omega_1$.

Next, define
$\mathcal{Q} :  \Lambda^2 \oplus \Lambda^2 \oplus \Lambda^2 
\rightarrow \Lambda^4 \oplus  \Lambda^4\oplus  \Lambda^4\oplus  \Lambda^4\oplus  \Lambda^4
$ by 
\begin{align}
\label{Q}
\mathcal{Q} ( \omega_1, \omega_2, \omega_3) 
= ( \omega_1 \wedge  \omega_2, \ \omega_1 \wedge  \omega_3, \omega_2 \wedge  \omega_3, \omega_1 \wedge \omega_1 - \omega_2 \wedge \omega_2, \omega_1 \wedge \omega_1 - \omega_3 \wedge \omega_3 ).
\end{align}
The kernel of the linearized operator of $\mathcal{Q}$ at a hyperk\"ahler triple will be $\Lambda^2_- \oplus \Lambda^2_- \oplus \Lambda^2_- \oplus E_M$, where $E_M$
is the rank $4$ bundle given by 
\begin{align}
\begin{split}
E_M \equiv \{ \bm{\theta} \in \Lambda^2_+ \oplus \Lambda^2_+ \oplus \Lambda^2_+ \ | \ &\omega_i \wedge \theta_j + \omega_j \wedge \theta_i = 0, i < j \mbox{ and } \omega_1 \wedge \theta_1 = \omega_2 \wedge \theta_2 = \omega_3 \wedge \theta_3\}.
\end{split}
\end{align}
Recall that there is a Dirac-type operator $\mathcal{D} : \Gamma(TM) \rightarrow \Gamma(E_M)$, defined as follows: for every $Y\in\Gamma(TM)$, $\mathcal{D}(Y)$ is the projection of the Lie derivative $\mathcal{L}_Y(\bm{\omega})$ to $E_M$. The operator $- \mathcal{D} \mathcal{D}^*$ can be identified with the Laplacian on functions, since $E_M$ admits a basis of parallel sections; see \cite[Section~3]{CCII}. 

\begin{proof}[Proof of Theorem~\ref{t:sharp-order-intro}]
Using the Fredholm Theory developed above, the argument is very similar to the proof of \cite[Theorem~A]{CCII}. Decompose the difference
\begin{equation}
\label{e:diffw}
 \Phi^* \bm{\omega}^X - \bm{\omega}^{\fM}  =\bm{\eta}_+  + \bm{\eta}_-
\end{equation}
where $\bm{\eta}_+$ is a self-dual triple and $\bm{\eta}_-$ is an anti-self-dual triple with respect to $g^{\fM}$ and the volume form $\frac{1}{2}(\omega_1^{\fM} \wedge \omega_1^{\fM})$.   We choose an irrational number $\epsilon\in(0,\mathfrak{n})$, and view the right hand side of \eqref{e:diffw} as an element of $W^{k,2}_{-\epsilon}(\fM_{2\nu}(R))$ for any $k \in \ZZ_+$.  For sufficiently large $R$, expanding \eqref{Q} yields
\begin{align}
\mathcal{Q}(\Phi^*\bm{\omega}^X) = \mathcal{Q}(\bm{\omega}^{\fM}) 
+ \mathscr{L}_{\bm{\omega}^{\fM}} ( \Phi^* \bm{\omega}^X - \bm{\omega}^{\fM})
+ \mathcal{N}_{\bm{\omega}^{\fM}}(  \Phi^* \bm{\omega}^X - \bm{\omega}^{\fM}),
\end{align}
where $\mathscr{L}_{\bm{\omega}^{\fM}}$ is the linearized operator at $\bm{\omega}^{\fM}$, 
and $\mathcal{N}_{\bm{\omega}^{\fM}}$ are the nonlinear terms. We then have 
\begin{align}
\label{e:Almost-in-E}
\mathscr{L}_{\bm{\omega}^{\fM}} ( \bm{\eta}_+) =  \mathscr{L}_{\bm{\omega}^{\fM}} ( \bm{\eta}_+  + \bm{\eta}_-) 
= -  \mathcal{N}_{\bm{\omega}^{\fM}}( \bm{\eta}_+  + \bm{\eta}_-).
\end{align}
From the structure of \eqref{Q}, the right hand side of \eqref{e:Almost-in-E} must be in $W^{k,2}_{-2\epsilon}(\fM_{2\nu}(R))$. Therefore, the projection $\bm{\theta}$ of $\bm{\eta}_+$ to $E_{\fM_{2\nu}(R)}$ satisfies $\bm{\theta} \in W^{k,2}_{-\epsilon}(\fM_{2\nu}(R))$
and $\bm{\eta}_+ - \bm{\theta} \in  W^{k,2}_{-2\epsilon}(\fM_{2\nu}(R))$.

By Proposition~\ref{Weighted-analysis-X} (3), there exists a bounded linear operator 
\begin{align}
G_{g^{\fM}} : W^{k,2}_{-\epsilon}( \Gamma(E_{\fM_{2\nu}(R)})) \rightarrow  W^{k+2,2}_{-\epsilon+2}( \Gamma(E_{\fM_{2 \nu}(R)}))
\end{align}
such that 
\begin{align}
- \mathcal{D}_{g^{\fM}} \mathcal{D}^*_{g^{\fM}} G_{g^{\fM}} \bm{\theta} = \bm{\theta}.
\end{align}   
Consider the vector field $Y =  \mathcal{D}^*_{g^{\fM}} G_{g^{\fM}} \bm{\theta}$ which satisfies $Y \in W^{k+1,2}_{1 - \epsilon}(\fM_{2\nu}(R))$. By elliptic regularity, $Y$ is smooth, and by Proposition~\ref{p:Sobolevemb}, $|\nabla^m Y| = O(\fs^{1 - m - \epsilon})$ for any $m \in \dN_0$, as $\fs \to \infty$. It is important to point out that $Y$ does not depend on the choice of $k$.
Define $\Phi_t : \fM_{2\nu}(2R) \rightarrow \fM_{2\nu}(R)$ by 
$\Phi_t(x) = \exp_{g^{\fM},x}(tY_x)$. If $0 \leq t \leq 1$, since $1- \epsilon < 1$, the vector $tY_x$ has norm much smaller than the conjugate radius at $x$ (which is comparable to $\fs(x)$), so $\Phi_t$ is a diffeomorphism onto its image for $R$ sufficiently large. Then we have 
\begin{align}
\Phi_1^* \bm{\omega}^{\fM} - \bm{\omega}^{\fM} - \mathcal{L}_Y \bm{\omega}^{\fM}  \in W^{k,2}_{-2 \epsilon}(\fM_{2\nu}(R)).
\end{align}
Let $\Phi' = \Phi \circ \Phi_1$. From \eqref{e:diffw}, we then have 
\begin{align}
\begin{split}
(\Phi')^*\bm{\omega}^{X} -  \bm{\omega}^{\fM} - \bm{\eta}_+  - \bm{\eta}_-  - \mathcal{L}_Y \bm{\omega}^{\fM}  = \Phi_1^* \Phi^* \bm{\omega}^{X} - \Phi^*\bm{\omega}^{X} - \mathcal{L}_Y \bm{\omega}^{\fM}\\
 = ( \Phi_1^* - \Id) (\Phi^* \bm{\omega}^{X} - \bm{\omega}^{\fM})+ \Phi_1^* \bm{\omega}^{\fM} - \bm{\omega}^{\fM} - \mathcal{L}_Y \bm{\omega}^{\fM}
  \in W^{k,2}_{-2 \epsilon}(\fM_{2\nu}(R)).
\end{split}
\end{align}
Similar to \eqref{e:Almost-in-E} above, we have the expansion
\begin{align}
\label{e:Almost-in-E2}
\mathscr{L}_{\bm{\omega}^{\fM}} ( \bm{\eta}_+  + (\mathcal{L}_Y \bm{\omega}^{\fM})^+) = \mathscr{L}_{\bm{\omega}^{\fM}} ( \bm{\eta}_+  + \bm{\eta}_- + \mathcal{L}_Y \bm{\omega}^{\fM}) = -\mathcal{N}_{\bm{\omega}^{\fM}}( \bm{\eta}_+  + \bm{\eta}_- + \mathcal{L}_Y \bm{\omega}^{\fM}).
\end{align}
From the structure of \eqref{Q}, the right hand side of \eqref{e:Almost-in-E2} must be in $W^{k,2}_{-2\epsilon}(\fM_{2\nu}(R))$. Since the projection of $\bm{\eta}_+  + (\mathcal{L}_Y \bm{\omega}^{\fM})^+$ to $E_{\fM_{2\nu}(R)}$ 
is $\bm{\theta} + \mathcal{D}_{g^{\fM}} Y = 0$,  we see that $\bm{\eta}_+  + (\mathcal{L}_Y \bm{\omega}^{\fM})^+ \in W^{k,2}_{-2\epsilon}(\fM_{2\nu}(R))$. In other words, redefining $\bm{\eta}_{\pm}$, we may assume that 
\begin{align}
(\Phi')^* \bm{\omega}^X-\bm{\omega}^{\fM} = \bm{\eta}_+ + \bm{\eta}_-
\end{align}
with $\bm{\eta}_- \in W^{k,2}_{-\epsilon}(\fM_{2\nu}(R))$, 
and $\bm{\eta}_+ \in W^{k,2}_{-2\epsilon}(\fM_{2\nu}(R))$.
Since $\bm{\eta}_-$ is anti-self-dual with respect to $\bm{\omega}^{\fM}$, 
\begin{align}
\Delta_{g^{\fM}} \bm{\eta}_- = (- *_{g^{\fM}} d *_{g^{\fM}} d - d*_{g^{\fM}}d*_{g^{\fM}})  \bm{\eta}_-
=- *_{g^{\fM}} d *_{g^{\fM}} d    \bm{\eta}_- + d*_{g^{\fM}}d   \bm{\eta}_-.
\end{align}
But since both triples are closed, $d \bm{\eta}_- = - d \bm{\eta}_+$,  therefore, $\Delta_{g^{\fM}} \bm{\eta}_- \in W^{k-2,2}_{- 2 \epsilon - 2}(\fM_{2\nu}(R))$. 
By (3) and (4) in Proposition~\ref{Weighted-analysis-X}, if $\epsilon < \frac{1}{2}$, then $\bm{\eta}_-\in W^{k,2}_{-2\epsilon}(\fM_{2\nu}(2R))$. We can replace $(\Phi, \epsilon)$ with $(\Phi', 2\epsilon)$ and repeat this process until $\epsilon > \frac{1}{2}$. If $\frac{1}{2} < \epsilon < 1$, then $\bm{\eta}_-$ is the sum of a triple in $\mathcal{Z}^{2,-}_{-1}\otimes\RR^3$ and a triple in $W^{k,2}_{-2\epsilon}(\fM_{2\nu}(2R))$. By Lemma \ref{l:Z2-}, all the non-zero forms in $\mathcal{Z}^{2,-}_{-1}$ are not invariant under the $\dZ_2$-action $\iota:\widehat{\fM}\to\widehat{\fM}$. So we see that we can replace $(\Phi, \epsilon)$ with $(\Phi', 2\epsilon)$ and repeat this process again. If $1 < \epsilon < 2$, then $\bm{\eta}_-$ is the sum of $\bm{\eta}_{\mathcal{Z}} \in \mathcal{Z}^{2,-}_{-2}\otimes\RR^3$ and a triple in $W^{k,2}_{-\epsilon-1}(\fM_{2\nu}(2R))$. Moreover, the decay rate of $d\bm{\eta}_{\mathcal{Z}}$ is strictly larger than $3$. So by the explicit definition of $\mathcal{Z}^{2,-}_{-2}$ in Lemma \ref{l:Z2-}, $|\nabla^k \bm{\eta}_{\mathcal{Z}}| = O(\fs^{-2-k})$, as $\fs \to \infty$, for any $k\in\dN_0$. The above is a finite iteration, so $\bm{\eta}_- - \bm{\eta}_{\mathcal{Z}} \in W^{k,2}_{-\epsilon-1}(\fM_{2\nu}(2R))$ for any $k \in \dN_0$. Then by Proposition~\ref{p:Sobolevemb}, $|\nabla^k (\bm{\eta}_- - \bm{\eta}_{\mathcal{Z}})| = O(\fs^{-\epsilon-1-k})$, as $\fs \to \infty$, for any $k\in\dN_0$. Therefore, by replacing $\Phi$ with $\Phi'$,  \eqref{algstaromega} is satisfied for $\mathfrak{n} = 2$.

\end{proof}

\appendix

\section{Computations on the ALG$^*$ model space}
In this section, we derive some expansions of harmonic forms on the model space $(\widehat{\fM}_{\nu}(R), g^{\widehat{\fM}}_{\kappa_0,L})$. By scaling, we may assume that $L = 1$.
For simplicity, we will denote $\widehat{\fM} \equiv \widehat{\fM}_{\nu}(R)$. 
Define an orthonormal basis of $1$-forms by
\begin{align}
\{ e^0, e^1, e^2, e^3 \} = \{ V^{1/2} dr, V^{1/2}r d\t1, V^{1/2} d\t2, V^{-1/2} \Theta\},
\end{align}
and use the orientation $e^0 \wedge e^1 \wedge e^2 \wedge e^3$. The following formulas are straightforward to verify, and will be used throughout the appendix. We have
\begin{align}
\begin{split}
d e^0 &= 0, \quad d e^1 = \p_r(V^{1/2}r) V^{-1} r^{-1} e^0 \wedge e^1,\\
d e^2 & = \p_r (V^{1/2}) V^{-1} e^0 \wedge e^2, \quad d e^3 = \p_r ( V^{-1/2}) e^0 \wedge e^3 + \frac{\nu}{2\pi}V^{-3/2} r^{-1} e^1 \wedge e^2,
\end{split}
\end{align}
and
\begin{align}
d ( e^0 \wedge e^1) &= d (e^0 \wedge e^2 ) = d (e^2 \wedge e^3) =0, \\
d (e^1 \wedge e^3) &= V^{-1/2} r^{-1} e^0 \wedge e^1 \wedge e^3,\\
d(e^1 \wedge e^2) & = \Big( V^{-3/2} \p_r(V) + V^{-1/2} r^{-1} \Big) e^0 \wedge e^1 \wedge e^2,\\
d(e^0 \wedge e^3) & = - \frac{\nu}{2\pi}V^{-3/2} r^{-1}  e^0 \wedge e^1 \wedge e^2.
\end{align}
\subsection{Laplacian on functions}

First, we need to characterize the harmonic functions of the form
\begin{align}
\label{functans}
h = f(r) e^{\i  k \t1} , \quad k \in\dZ,
\end{align}
with prescribed growth order.
The following lemma identifies the space $\mathcal{Z}_{\fq}^0(\widehat{\fM})$.

 \begin{lemma}
\label{l:Z0}
Given an integer $\fq\in\dZ$, let $\mathcal{Z}_{\fq}^0(\widehat{\fM})$ be the linear space with a basis
 \begin{align}
 \begin{split}
& \cB_0^0 \equiv \{1,\ V\},
 \\
&  \cB_{\fq}^0 \equiv \{r^{\fq}e^{\i \cdot \fq \cdot \t1},\ r^{\fq}e^{-\i \cdot \fq \cdot \t1}\}, \quad \fq \in\dZ\setminus\{0\}.
\end{split}
\end{align}	
If a function $h$ satisfies \eqref{functans} for some $k\in\dZ$ and solves $\Delta h = 0$ on $\widehat{\fM}$, then \begin{align}
h\in
\begin{cases}
\mathcal{Z}_{0}^0(\widehat{\fM}), & \text{if}\   k = 0,
\\
\mathcal{Z}_{k}^0(\widehat{\fM}) \oplus \mathcal{Z}_{-k}^0(\widehat{\fM}), & \text{if} \ k \in\dZ\setminus\{0\}.
\end{cases}	
\end{align}
\end{lemma}

\begin{proof}Taking the differential $dh$ of $h$,
we have that
\begin{align}
d h = e^{\i  k \t1} ( f' dr + \i k f d \t1) =  e^{\i  k \t1} ( f' V^{-1/2} e^0 + \i k f V^{-1/2} r^{-1} e^1).
\end{align}
Then
\begin{align}
\begin{split}
* dh & =   e^{\i k \t1} ( f' V^{-1/2} e^1 \wedge e^2 \wedge e^3  - \i  k f V^{-1/2} r^{-1}  e^0 \wedge e^2 \wedge e^3  )\\
& =  e^{\i k \t1} ( f' r  d \t1 \wedge d \t2 \wedge \Theta  - \i k f r^{-1}   dr \wedge d \t2 \wedge \Theta  ).
\end{split}
\end{align}
Applying $d$,
\begin{align}
\notag
d * dh &=   e^{\i k \t1} \i k d \t1 \wedge (  - \i k f r^{-1}   dr \wedge d \t2 \wedge \Theta  )
+   e^{\i k \t1}  \p_r(f' r) dr  \wedge  d \t1 \wedge d \t2 \wedge \Theta \\
& =  e^{\i k \t1} \Big(   \p_r(f' r) -  k^2 f r^{-1} \Big)  V^{-1} r^{-1}  e^0  \wedge  e^1 \wedge e^2 \wedge e^3.
\end{align}
Finally, we have
\begin{align}
\Delta h = - * d * d h &= -  e^{\i k \t1} \Big(  f'' + r^{-1} f'  -  k^2 r^{-2}f   \Big) V^{-1}.
\label{Expansion-function}
\end{align}
The ODE for a harmonic function is therefore
\begin{align}
 f'' + r^{-1} f'  -  k^2 r^{-2}f   = 0,
\end{align}
and the solutions are given by 
\begin{align}
f(r) =
\begin{cases}
C_1 + C_2 V & k = 0\\
C_1 r^k + C_2 r^{-k} & k \neq 0.
\end{cases}
\end{align}

\end{proof}

\subsection{Forms of Type I on the model space}
\label{ss:a1I}
 A $1$-form $\omega$ is said to be of {\it Type I} if $\omega$ satisfies
\begin{align}
\omega = e^{\i k \t1 } \left( f(r) e^0 + a(r) e^1 \right).\label{e:type-I-omega-a-f}
\end{align}
This subsection studies the Type I solutions of $\Delta \omega = 0$.

Let us denote $dz = dr + \i r d\theta_1$. 
The following lemma characterizes the Type I solutions of $\Delta \omega = 0$.

\begin{lemma}
\label{l:Z1I}
Given an integer $\fq\in\dZ$, let $\mathcal{Z}_{\fq}^{1,\I}(\widehat{\fM})$ be the linear space with a basis 
 \begin{align}
 \begin{split}
 \cB_0^{1,\I} &\equiv  \left\{ dz,\ V^2 dz,\ d\bar{z},\ V^2 d\bar{z}\right\},  \\
  \cB_{\fq}^{1,\I} &\equiv  \Big\{e^{ \i (\fq+1)  \t1 } r^{\fq}\cdot dz,\   e^{ - \i (\fq+1)  \t1 }(\nu - 4 \fq \pi V )  r^{\fq} \cdot dz,\\
\ & \ \ \ \ \ \ e^{-\i (\fq+1)  \t1 } r^{\fq}\cdot d\bar{z},\ e^{\i (\fq+1)  \t1 }(\nu  - 4 \fq \pi V  ) r^{\fq} \cdot d\bar{z}
\Big\}, \quad \fq \neq 0.
\end{split}
\end{align}
If a complex-valued $1$-form $\omega$ of type I satisfies \eqref{e:type-I-omega-a-f} for some $k\in\dZ$ and solves $\Delta \omega = 0$ on $\widehat{\fM}$, then
\begin{align}
\omega\in
\begin{cases}
	\mathcal{Z}_{-1}^{1,\I}(\widehat{\fM}), & \text{if}\  k = 1, 
	\\
	\mathcal{Z}_{k-2}^{1,\I}(\widehat{\fM})\oplus \mathcal{Z}_{-k}^{1,\I}(\widehat{\fM}), & \text{if} \ k \in \dZ\setminus\{1\}.
\end{cases}
\end{align}
\end{lemma}

\begin{proof}
 In the proof, in stead of directly analyzing Type I, we will first reduce \eqref{e:type-I-omega-a-f} by analyzing the following ansatz,
\begin{align}
\omega = e^{\i k \t1 } \left( f(r) e^0 + a(r) e^1 \right) = e^{\i k \t1 } u(r) \left( e^{-\i \t1} V^{1/2} dz\right). \label{e:type-I-u-ansatz} \end{align} Then
\begin{align}\omega = e^{\i k \t1} u(r) ( e^0 + \i e^1),
 \end{align}
 that is, $a(r) = \i f(r)$.
  Note that for the complex structure $I$ defined as above,
we have $I^*(e^0) = - e^1$. Then
\begin{align}
e^0 - \i I^* e^0 = e^0 + \i e^1 \in \Lambda^{1,0}_I.
\end{align}
Since the metric is K\"ahler, the Hodge Laplacian preserves the type of forms,
so \eqref{e:type-I-u-ansatz} is a natural ansatz.
 
To begin with, let us compute the Laplacian of $\omega$. First,
\begin{align}
* \omega = e^{\i k \t1 } u(r) \Big(   e^1 \wedge e^2 \wedge e^3 - \i  e^0 \wedge e^2 \wedge e^3 \Big).
\end{align}
This is
\begin{align}
* \omega = e^{\i k \t1 } u(r) V^{1/2} \Big(   r  d \t1 \wedge d\t2 \wedge \Theta - \i  dr \wedge d \t2 \wedge \Theta \Big).
\end{align}
Then
\begin{align}
\notag
d * \omega &= e^{\i k \t1 }\i k u V^{1/2} d \t1 \wedge  \Big( - \i     dr \wedge d \t2 \wedge \Theta  \Big)
+ e^{\i k \t1 } \p_r( r V^{1/2}u )   dr \wedge  d \t1 \wedge d\t2 \wedge \Theta\\
& =  e^{\i k \t1} \Big(     \p_r( r V^{1/2}u ) - k u V^{1/2} \Big) V^{-1}r^{-1}  e^0 \wedge    e^1 \wedge  e^2 \wedge e^3,
\end{align}
which implies that
\begin{align}
\delta \omega = - * d * \omega =  -  e^{\i k \t1} \Big(   \p_r( r u V^{1/2}) - k u V^{1/2} \Big) V^{-1}r^{-1}.
\end{align}
We define
\begin{align}
p(r) =  \Big(  \p_r( r V^{1/2}u ) - k u V^{1/2} \Big) V^{-1}r^{-1}.
\end{align}
Next, we have
\begin{align}
d \delta \omega &=  - \Big( e^{\i k \t1 } \i k p d \t1 +  e^{\i k \t1 } p' dr \Big),
\end{align}
or
\begin{align}
\label{ddelta1}
d \delta \omega &= - e^{\i k \t1} \Big( p' V^{-1/2} e^0 + \i k  p  V^{-1/2} r^{-1} e^1 \Big).
\end{align}
Next, we compute
\begin{align}
\begin{split}
d \omega &= e^{\i k \t1 }  \Big\{ \i k d \t1 \wedge \Big( u V^{1/2} (dr + \i r d\t1) \Big)
+ \p_r( r V^{1/2} u) dr \wedge ( \i d \t1) \Big\}\\
& =  \i e^{\i k \t1 } \Big(    \p_r( r V^{1/2}u )   -  k u V^{1/2} \Big) V^{-1} r^{-1} e^0 \wedge e^1 \equiv  \i  e^{\i k \t1 } p  e^0 \wedge e^1.
\end{split}
\end{align}
Noting that $* e^0 \wedge e^1 = e^2 \wedge e^3$, applying Hodge star, we have
\begin{align}
* d \omega &=  \i e^{\i k \t1 } p   e^2 \wedge e^3,
\end{align}
which implies that 
\begin{align}
\begin{split}
d * d \omega &=  \i e^{\i k \t1 } \i  k p d \t1 \wedge     e^2 \wedge e^3  +  \i e^{\i k \t1 } p'  dr  \wedge e^2 \wedge e^3\\
& = e^{\i k \t1} \Big\{   - kp V^{-1/2} r^{-1} e^1 \wedge     e^2 \wedge e^3
+ \i p' V^{-1/2} e^0  \wedge e^2 \wedge e^3 \Big\}.
\end{split}
\end{align}
Applying Hodge star,
\begin{align}
* d * d \omega &=  e^{\i k \t1 }  \Big(
   k p V^{-1/2} r^{-1}  e^0 + \i p' V^{-1/2} e^1 \Big).
\end{align}
Combining with \eqref{ddelta1} from above, we see that
\begin{align}
\notag
\Delta \omega &= - e^{\i k \t1}  \Big\{ p' V^{-1/2} e^0 + \i k  p  V^{-1/2} r^{-1} e^1 \Big\}
-   e^{\i k \t1 }  \Big(    k p V^{-1/2} r^{-1}  e^0 + \i p' V^{-1/2} e^1 \Big) \\
& = -  e^{\i k \t1}   \Big(  p' + k r^{-1} p \Big) V^{-1/2}(  e^0 + \i  e^1 ).
\end{align}
Assume that $\omega$ is a harmonic $1$-form. Then we have the homogeneous first order system
\begin{align}
 p' + k r^{-1} p &= 0.
\end{align}
The general solution is
\begin{align}
p & = C_1 r^{-k}.
\end{align}
Recalling that
\begin{align}
p(r) =  \Big(   \p_r( r V^{1/2} u ) - k u V^{1/2} \Big) V^{-1}r^{-1},
\end{align}
we have the ODE
\begin{align}
   \p_r( r u V^{1/2}) - k u V^{1/2} = C_1 V r^{-k+1}.
\end{align}
Let us define $\tilde{u} \equiv V^{1/2} u$, then this ODE can be written as
\begin{align}
\p_r (r \tilde{u}) - k  \tilde{u} = C_1 V r^{-k+1}.\label{e:tilde-u-inhomogeneous}
\end{align}
The general solution of the associated homogeneous equation
\begin{align}
\p_r (r \tilde{u}) - k  \tilde{u} = 0,
\end{align}
is $\tilde{u} = C_2 r^{k-1}$.

If $k = 1$, then the ODE \eqref{e:tilde-u-inhomogeneous} becomes
\begin{align}
 r \tilde{u}'= C_1 V,
\end{align}
which has a particular solution 
\begin{align}
\tilde{u} = C_1\Big( \kappa_0 \log(r) + \frac{ \nu}{4 \pi} (\log(r))^2 \Big).
\end{align}
Therefore, for $k = 1$, we have the general solution
\begin{align}
u(r) = &\ V^{-1/2} \left( C_2 +  C_1\left( \kappa_0 \log(r) + \frac{ \nu}{4 \pi} (\log(r))^2 \right) \right)
\nonumber\\
= &\ V^{-1/2}\left(\left(C_2 -\kappa_0^2\cdot \frac{\pi}{\nu}\cdot C_1\right)  + \frac{\pi}{\nu}\cdot C_1 \cdot V^2 \right).
\end{align}
Letting 
\begin{align}\widetilde{C}_1 \equiv \frac{\pi}{\nu}\cdot C_1 \quad \text{and} \quad 
\widetilde{C}_2 \equiv C_2 -\kappa_0^2\cdot \frac{\pi}{\nu}\cdot C_1,\end{align}
 we can then write the general solution for $k =1$ as
\begin{align}
u(r) = \widetilde{C}_2 V^{-1/2} + \widetilde{C}_1  V^{3/2}.
\end{align}

Next, we assume that $k \neq 1$. Then we need to solve
\begin{align}
\p_r (r \tilde{u}) - k  \tilde{u} = C_1 V r^{-k+1 }.
\end{align}
Using the integrating factor $r^{1-k}$, we see that
the general solution is
\begin{align}
\begin{split}
\tilde{u} &= C_2 r^{k-1} - \frac{C_1 r^{1-k}}{4 \pi (k-1)^2} \Big(
\frac{\nu}{2} + 2 (k-1) \pi \kappa_0 + \nu (k-1) \log(r) \Big) \\
& =  C_2 r^{k-1} - \frac{C_1 r^{1-k}}{4 \pi (k-1)^2} \Big(
\frac{\nu}{2} + 2 (k-1) \pi V \Big).
\end{split}
\end{align}
Letting 
\begin{equation}
\widetilde{C}_1 \equiv \frac{C_1 r^{1-k}}{4 \pi (k-1)^2},
\end{equation}
we therefore have the general solution for $k \neq 1$:
\begin{align}
u =  C_2 V^{-1/2} r^{k-1} + \widetilde{C}_1 r^{1-k} \Big(
\nu V^{-1/2} + 4 (k-1) \pi V^{1/2} \Big).
\end{align}

Notice that the Laplacian is a real operator, so we also have conjugate solutions. This completes the proof.
\end{proof}

\begin{remark}
We note that the solutions with $\widetilde{C}_1 = 0$ are exactly the ones with $p = 0$, so by the above computations, they are exactly the solutions which satisfy $d \omega =0$ and $\delta \omega = 0$. The solutions with $\widetilde{C}_1 \neq 0$ are neither closed nor co-closed.
\end{remark}

\subsection{One-forms of Type II on the model space}
A $1$-form $\omega$ is said to be of {\it Type II} if
\begin{align}
\omega = e^{\i k \t1 } \Big(  b(r) e^2  + c(r)  e^3 \Big). \label{e:type-II-omega-b-c}
\end{align}  
Then we have the following lemma.

\begin{lemma}
\label{l:Z1II}
Given an integer $\fq\in\dZ$, let $\mathcal{Z}_{\fq}^{1,\II}(\widehat{\fM})$ be the linear space with a basis 
 \begin{align}
\notag
 \cB_0^{1,\II} \equiv & \ \Big\{   -  d \t2 + \i V^{-1} \Theta,\  - V^2 d \t2 + \i V  \Theta,   \ - d \t2 - \i V^{-1} \Theta,\  - V^2 d \t2 - \i V \Theta\Big\},
 \\  \notag
  \cB_{\fq}^{1,\II} \equiv & \ \Big\{e^{ \i \fq  \t1 } r^{\fq}\cdot ( - d \t2 + \i V^{-1} \Theta),\ e^{-\i \fq  \t1 }  r^{\fq}\cdot ( -  d \t2 - \i V^{-1} \Theta),\\
&e^{ - \i \fq  \t1 }(\nu V^{-\frac{1}{2}} - 4 \fq \pi V^{\frac{1}{2}} ) ) r^{\fq} \cdot  ( - V^{\frac{1}{2}} d \t2 + \i V^{-\frac{1}{2}} \Theta),\\
\ &  e^{\i \fq  \t1 }(\nu V^{-\frac{1}{2}} - 4 \fq \pi V^{\frac{1}{2}} ) ) r^{\fq} \cdot  ( - V^{\frac{1}{2}} d \t2 - \i V^{-\frac{1}{2}} \Theta)
\Big\}, \quad \fq \in \dZ \setminus\{0\}. \notag
\end{align}	
If a complex-valued $1$-form $\omega$ of type II satisfies \eqref{e:type-II-omega-b-c} for some $k\in\dZ$ and solves $\Delta \omega = 0$ on $\widehat{\fM}$, then 
\begin{align}
\omega \in
\begin{cases}
\mathcal{Z}_0^{1,\II}(\widehat{\fM}), & \text{if}\ k = 0,
\\
\mathcal{Z}_{k}^{1,\II}(\widehat{\fM}) \oplus \mathcal{Z}_{-k}^{1,\II}(\widehat{\fM}), & \text{if}\ k \in \dZ\setminus\{0\}.
\end{cases}	
\end{align}
\end{lemma}

The proof relies on the following relationship between Type II forms and Type I forms:
\begin{lemma}   Any form $\omega_2$ of Type II may be written as $J^*(\omega_1)$, where $\omega_1$ is a form of type I, for the hyperk\"ahler complex structure $J$ and the dual linear operator $J^*$. So the decay rates of harmonic Type II forms are the same as Type I forms.
\end{lemma}
\begin{proof}
We use the complex structure $J$ given by
\begin{align}
J^*( dx ) = - d \t2, \quad J^*(V^{1/2} dy ) = V^{-1/2} \Theta.
\end{align}
Recall that
\begin{align}
e^0 + \i e^1 = V^{1/2} ( dr + \i r d\t1) = V^{-1/2} e^{-\i \t1} ( dx + \i dy).
\end{align}
We then have
\begin{align}
J^* ( e^0 + \i e^1) &= V^{1/2}e^{-\i \t1} ( J dx + \i J dy) \nonumber\\
& =  V^{1/2} e^{-\i \t1} ( - d \t2 + \i V^{-1} \Theta) = - e^{-\i \t1}( e^2 - \i e^3).
\end{align}
So if $\omega_1$ is a $1$-form of type I and satisfies 
\begin{align}
\omega_1 = e^{\i k \t1} u(r) ( e^0 + \i e^1),
\end{align}
then
\begin{align}
J^*(\omega_1) & = - e^{\i (k-1) \t1}  u(r) ( e^2 - \i e^3), \\
J^*(\overline{\omega_1}) & = - e^{\i (1-k) \t1} \overline{u(r)} ( e^2 + \i e^3).
\end{align}
On $1$-forms, $\Delta = \nabla^* \nabla$, and since $J$ is parallel,
$\omega_2$ is harmonic if and only if $\omega_1$ is harmonic.
These generate all Type II harmonic $1$-forms.
\end{proof}

\subsection{$2$-forms on the model space}
Let us define the $2$-forms, 
\begin{align}
\omega^1_{\pm} &=   e^0 \wedge e^1 \pm  e^2 \wedge e^3, \quad \omega^2_{\pm} =   e^0 \wedge e^2 \mp  e^1 \wedge e^3, \quad \omega^3_{\pm} =   e^0 \wedge e^3 \pm  e^1 \wedge e^2.
\end{align}
Then we have
$* \omega^i_{\pm} = \pm \omega^i_{\pm}$, $i = 1, 2, 3$.
Note that
\begin{align}
\omega^1_{\pm} &=   V r dr  \wedge d \t1 \pm  d \t2 \wedge \Theta, \\
\omega^2_{\pm} &=   V dr \wedge d \t2 \mp  r d\t1 \wedge \Theta, \\
\omega^3_{\pm} &=   dr \wedge \Theta \pm  Vr d\t1 \wedge d \t2.
\end{align}
We compute the exterior derivatives
\begin{align}
d \omega^1_{\pm} & = 0,\\
d \omega^2_{\pm} & = \mp dr \wedge d \t1 \wedge \Theta = \mp V^{-1/2} r^{-1} e^0 \wedge e^1 \wedge e^3,\\
d \omega^3_{\pm} & = \Big( -\frac{ \nu}{2\pi} \pm (Vr)'\Big) dr \wedge d \t1 \wedge d \t2
 = \Big( -\frac{ \nu}{2\pi} \pm (Vr)'\Big) V^{-3/2} r^{-1} e^0 \wedge e^1 \wedge e^2.
\end{align}
If we expand further, we have
\begin{align}
d \omega^3_{-} & =- \Big( \frac{\nu}{ \pi} + V\Big) dr \wedge d \t1 \wedge d \t2
 = -\Big( \frac{\nu}{ \pi} + V \Big) V^{-3/2} r^{-1} e^0 \wedge e^1 \wedge e^2.
\end{align}
\begin{remark} Recall that $\omega^i_+$ are not the hyperk\"ahler forms. For this, we should define $E^0 = V^{-1/2} dx,  E^1 = V^{-1/2} dy$. But for the following calculations, we can freely choose an orthonormal basis.
\end{remark}
\begin{remark}  If $\omega$ is self-dual, then we may also write
$\omega = a \omega_I + b \omega_J
+ c \omega_K$, and hence 
\begin{align}
\Delta \omega =  (\Delta a) \omega_I
+  (\Delta b) \omega_J + (\Delta c)  \omega_K.
\label{Expansion-2-form-self-dual}
\end{align}
Since we already know the result about harmonic functions, we can define 
\begin{align}
\label{e:Z2+}
\mathcal{Z}_{\fq}^{2,+}(\widehat{\fM})\equiv \mathcal{Z}_{\fq}^0(\widehat{\fM}) \otimes (\RR \omega_I\oplus\RR \omega_J\oplus\RR \omega_K).
\end{align}
In the following we only need to consider ASD $2$-forms.
\end{remark}
We want $\omega$ to be invariant under the $S^1$-action.
Let us consider the following general ansatz
\begin{align}
\omega = e^{\i k \t1 } \Big( a(r) \w^1_- + b(r) \w^2_-  + c(r)  \w^3_- \Big).\label{e:S1-invarint-2-form-omega}
\end{align}
Then we prove the following lemma.

\begin{lemma}
\label{l:Z2-}
Given an integer $\fq\in\dZ$, let $\mathcal{Z}_{\fq}^{2,-}(\widehat{\fM})$ be the linear space with a basis
\begin{enumerate}
\item For every $\fq\in\dZ\setminus\{0, 1, -1\}$,
\begin{align*}
 \begin{split}
 \cB_{\fq}^{2,-} \equiv & \ \Big\{r^{\fq} e^{\i \fq \theta_1}\omega_-^1, r^{\fq} e^{-\i \fq \theta_1}\omega_-^1,\\
&\frac{\i}{\fq - 1} r^{\fq} \Big( - \frac{ (\fq - 1 ) V}{ 2 \fq} + \frac{ 2 \fq -1 }{2 \fq^2}\frac{\nu}{2\pi} + \frac{(\fq-1)^2}{4 \fq^3} \frac{\nu^2}{4\pi^2} V^{-1} \Big) e^{\i (1-\fq) \t1}\omega^2_-\\
&+ r^{\fq} \Big( - \frac{V}{2 \fq} + \frac{1}{2 \fq^2}\frac{\nu}{2\pi} - \frac{\fq + 1}{4 \fq^3} \frac{\nu^2}{4\pi^2} V^{-1} + \frac{1}{4 \fq^3} \frac{\nu^3}{8\pi^3} V^{-2}\Big) e^{\i (1 - \fq) \t1}\omega^3_-, \\
&-\frac{\i}{\fq - 1} r^{\fq} \Big( - \frac{ (\fq - 1 ) V}{ 2 \fq} + \frac{ 2 \fq -1 }{2 \fq^2}\frac{\nu}{2\pi} + \frac{(\fq-1)^2}{4 \fq^3} \frac{\nu^2}{4\pi^2} V^{-1} \Big) e^{-\i (1-\fq) \t1}\omega^2_-\\
&+ r^{\fq} \Big( - \frac{V}{2 \fq} + \frac{1}{2 \fq^2}\frac{\nu}{2\pi} - \frac{\fq + 1}{4 \fq^3} \frac{\nu^2}{4\pi^2} V^{-1} + \frac{1}{4 \fq^3} \frac{\nu^3}{8\pi^3} V^{-2}\Big) e^{-\i (1 - \fq) \t1}\omega^3_-, \\
&V^{-1} r^{\fq} e^{\i (\fq + 1)  \t1}\omega^2_- + \frac{1}{\i (\fq + 1)}  V^{-2} r^{\fq} \Big( (\fq + 1) V - \frac{\nu}{2\pi} \Big) e^{\i (\fq + 1 ) \t1}\omega^3_-,\\
&V^{-1} r^{\fq} e^{-\i (\fq + 1)  \t1}\omega^2_- - \frac{1}{\i (\fq + 1)}  V^{-2} r^{\fq} \Big( (\fq + 1) V - \frac{\nu}{2\pi} \Big) e^{-\i (\fq + 1 ) \t1}\omega^3_- \Big\}.	
\end{split}
\end{align*}	

\item For $\fq = - 1$,  
\begin{align*}
 \begin{split}
\cB_{-1}^{2,-} \equiv & \ \Big\{ r^{-1} e^{-\i \theta_1} \omega_-^1, r^{-1} e^{\i \theta_1} \omega_-^1, \quad r^{-1} \omega^2_-,  \quad V^{-2} r^{-1} \omega^3_-\\
&\frac{\i}{2} r^{-1} \Big(V + \frac{3\nu}{4\pi} +  \frac{\nu^2}{4\pi^2} V^{-1} \Big) e^{2\i  \t1}\omega^2_-
+ r^{-1} \Big( \frac{V}{2} +  \frac{\nu}{4\pi} - \frac{\nu^3}{32\pi^3} V^{-2}\Big) e^{2\i\t1}\omega^3_-,  \\  
&-\frac{\i}{2} r^{-1} \Big(V + \frac{3\nu}{4\pi} +  \frac{\nu^2}{4\pi^2} V^{-1} \Big) e^{-2\i  \t1}\omega^2_-
+ r^{-1} \Big( \frac{V}{2} +  \frac{\nu}{4\pi} - \frac{\nu^3}{32\pi^3} V^{-2}\Big) e^{-2\i\t1}\omega^3_- \Big\}.
\end{split}
\end{align*}	 

\item For $\fq = 0$, 
\begin{align*}
 \begin{split}
\cB_0^{2,-} \equiv & \ \Big\{ \omega_-^1, \quad 	Ve^{\i \theta_1}\omega_-^1, \quad 	Ve^{-\i \theta_1}\omega_-^1\\
&\i \Big(\frac{1}{3} \frac{2\pi}{\nu} V^2 + \frac{ 1 }{2} V + \frac{1}{2} \frac{\nu}{2\pi} \Big) e^{\i \t1}\omega^2_- + \Big(\frac{1}{3} \frac{2\pi}{\nu} V^2 + \frac{1}{6} V \Big)  e^{\i \t1} \omega^3_-,	\\
&-\i \Big(\frac{1}{3} \frac{2\pi}{\nu} V^2 + \frac{ 1 }{2} V + \frac{1}{2} \frac{\nu}{2\pi} \Big) e^{-\i \t1}\omega^2_- + \Big(\frac{1}{3} \frac{2\pi}{\nu} V^2 + \frac{1}{6} V \Big)  e^{-\i \t1} \omega^3_-,	\\
&	V^{-1}e^{\i \theta_1} \omega_-^2 + \frac{1}{\i}V^{-2}\left(V- \frac{\nu}{2\pi}\right)e^{\i \theta_1}\omega_-^3, \\
&	V^{-1}e^{-\i \theta_1} \omega_-^2 - \frac{1}{\i}V^{-2}\left(V- \frac{\nu}{2\pi}\right)e^{-\i \theta_1}\omega_-^3 \Big\}.
\end{split}
\end{align*}	 

\item For $\fq = 1$, 
\begin{align*}
 \begin{split}
\cB_1^{2,-} \equiv & \ \Big\{ re^{\i \theta_1}\omega_-^1,	\quad re^{-\i \theta_1}\omega_-^1, \quad r \omega^2_-,\\
& r \Big(\frac{1}{2} V^2 - \frac{\nu}{2\pi} V + \frac{3}{2} \Big(\frac{\nu}{2\pi}\Big)^2 - \frac{3}{2} \Big(\frac{\nu}{2\pi}\Big)^3 V^{-1} + \frac{3}{4} \Big(\frac{\nu}{2\pi}\Big)^4 V^{-2}\Big) \omega^3_-,\\
&V^{-1}r e^{2\i \theta_1}\omega_-^2 + \frac{1}{2\i} V^{-2} r \left(2V - \frac{\nu}{2\pi} \right)	e^{2\i \theta_1}\omega_-^3, \\
&V^{-1}r e^{-2\i \theta_1}\omega_-^2 - \frac{1}{2\i} V^{-2} r \left(2V - \frac{\nu}{2\pi} \right)	e^{-2\i \theta_1}\omega_-^3
\Big\}.
\end{split}
\end{align*}	 
 \end{enumerate}
If a complex-valued 2-form $\omega$ satisfies \eqref{e:S1-invarint-2-form-omega} for some $k\in\dZ$ and solves $\Delta \omega = 0$ on $\widehat{\fM}$ , then 
\begin{align}
\omega \in 
\begin{cases}\bigoplus\limits_{\fq=-2}^2 \mathcal{Z}_{\fq}^{2, -}(\widehat{\fM}), & k \in\{-1,1\}
	\\
	\bigoplus\limits_{\fq=-1}^1 \mathcal{Z}_{\fq}^{2, -}(\widehat{\fM}), & k = 0,
	\\
 \left(\bigoplus\limits_{m=-1}^1 \mathcal{Z}_{k + m}^{2, -}(\widehat{\fM}) \right) \bigoplus \left(\bigoplus\limits_{m=-1}^1 \mathcal{Z}_{-k + m}^{2, -}(\widehat{\fM}) \right),	& k \in \dZ\setminus\{-1,0,1\}.
\end{cases}	
\end{align}
\end{lemma}

\begin{remark} We have that $* \omega = - \omega$ for any ASD 2-form $\omega$, so
\begin{align}
\Delta \omega = d \delta \omega + \delta d \omega
= - d * d * \omega -  * d * d \omega
= d * d \omega -  * d * d \omega
= (\Id - *) d * d \omega.
\end{align}
Therefore, we only need to compute $ d*d\omega$, and then take twice the projection onto the space of ASD $2$-forms.
\end{remark}

\begin{proof}
First, 	we compute
\begin{align}
\begin{split}
d \omega &=   e^{\i k \t1 } \i  k d \t1 \wedge \Big( a(r) \w^1_- + b(r) \w^2_-  + c(r)  \w^3_- \Big)\\
& +  e^{\i k \t1 } \Big( a' dr \wedge \w^1_- + b' dr \wedge \w^2_-  + c' dr \wedge  \w^3_- \Big) + e^{\i k \t1 } \Big( a d \w^1_- + b d \w^2_-  + c d \w^3_- \Big).
\end{split}
\end{align}
Using the above, this is
\begin{align}
\begin{split}
d \omega &=   e^{\i k \t1 } \Big\{ \i k a V^{-1/2} r^{-1} e^1 \wedge \w^1_-
+ \i k b  V^{-1/2} r^{-1}  e^1 \wedge\w^2_-  \\
&+ \i k c  V^{-1/2} r^{-1}  e^1 \wedge \w^3_- +   a' V^{-1/2} e^0 \wedge \w^1_- + b' V^{-1/2} e^0 \wedge \w^2_-  \\
&+ c' V^{-1/2} e^0 \wedge  \w^3_- + b  V^{-1/2} r^{-1} e^0 \wedge e^1 \wedge e^3  -c \Big( \frac{\nu}{ \pi} + V \Big) V^{-3/2} r^{-1} e^0 \wedge e^1 \wedge e^2   \Big\}.
\end{split}
\end{align}
Then we have
\begin{align}
\begin{split}
d \omega &=   e^{\i k \t1 } \Big\{ - \i k a V^{-1/2} r^{-1} e^1 \wedge e^2 \wedge e^3
- \i k b  V^{-1/2} r^{-1}  e^0 \wedge e^1 \wedge e^2 \\
& - \i k c  V^{-1/2} r^{-1}   e^0 \wedge e^1 \wedge e^3  
 -  a' V^{-1/2}   e^0 \wedge e^2 \wedge e^3   + b' V^{-1/2}    e^0 \wedge e^1 \wedge e^3  \\
& - c' V^{-1/2}   e^0 \wedge e^1 \wedge e^2
 + b  V^{-1/2} r^{-1} e^0 \wedge e^1 \wedge e^3  - c \Big( \frac{\nu}{ \pi} + V \Big) V^{-3/2} r^{-1} e^0 \wedge e^1 \wedge e^2   \Big\}.
\end{split}
\end{align}
Collecting terms, we have
\begin{align}
\begin{split}
d \omega &=   e^{\i k \t1 } \Big\{ - \i k a V^{-1/2} r^{-1} e^1 \wedge e^2 \wedge e^3
-  a' V^{-1/2}   e^0 \wedge e^2 \wedge e^3 \\
&+ \Big[  - \i k c  V^{-1/2} r^{-1}   + b' V^{-1/2}  + b  V^{-1/2} r^{-1}  \Big]  e^0 \wedge e^1 \wedge e^3\\
& + \Big[ - \i k b  V^{-1/2} r^{-1} - c' V^{-1/2}
- c  \Big( \frac{\nu}{ \pi} + V \Big) V^{-3/2} r^{-1}  \Big] e^0 \wedge e^1 \wedge e^2   \Big\}.
\label{e:d-ASD}
\end{split}
\end{align}
Now let us define
\begin{align}
p(r) & =   - \i k c  V^{-1/2} r^{-1}   + b' V^{-1/2}  + b  V^{-1/2} r^{-1} \\
q(r) & =  - \i k b  V^{-1/2} r^{-1} - c' V^{-1/2}
- c  \Big( \frac{\nu}{ \pi} + V \Big) V^{-3/2} r^{-1},
\end{align}
so that
\begin{align}
\begin{split}
d \omega =   e^{\i k \t1 } \Big\{ - \i k a V^{-1/2} r^{-1} e^1 \wedge e^2 \wedge e^3
&-  a' V^{-1/2}   e^0 \wedge e^2 \wedge e^3\\
&+ p  e^0 \wedge e^1 \wedge e^3
 + q e^0 \wedge e^1 \wedge e^2   \Big\}.
\end{split}
\label{e:d-omega}
\end{align}

Then we have
\begin{align}
\begin{split}
* d \omega &=   e^{\i k \t1 } \Big\{  \i k a V^{-1/2} r^{-1} e^0
-  a' V^{-1/2}   e^1 - p  e^2  + q  e^3   \Big\}.
\end{split}
\end{align}
So we have that
\begin{align}
\begin{split}
d * d \omega &=   e^{\i k \t1 } \i k d \t1 \wedge \Big\{  \i k a V^{-1/2} r^{-1} e^0
-  a' V^{-1/2}   e^1 - p  e^2 + q e^3   \Big\}\\
& +   e^{\i k \t1 } \Big\{  -  \p_r (a' V^{-1/2}) dr \wedge    e^1 -  a' V^{-1/2}  \p_r(V^{1/2}r) V^{-1} r^{-1} e^0 \wedge e^1 \\
&- p'  dr \wedge e^2 - p  \p_r (V^{1/2}) V^{-1} e^0 \wedge e^2   \\
& + q' dr \wedge e^3  + q
\Big(  \p_r ( V^{-1/2}) e^0 \wedge e^3 + \frac{\nu}{2\pi}V^{-3/2} r^{-1} e^1 \wedge e^2\Big)
\Big\}.
\end{split}
\end{align}
Simplifying some,
\begin{align}
\begin{split}
d * d \omega &=   e^{\i k \t1 } \Big\{  k^2 a V^{-1} r^{-2} e^0 \wedge e^1  - \i k p V^{-1/2} r^{-1} e^1 \wedge  e^2
+ \i k q V^{-1/2} r^{-1} e^1 \wedge   e^3  \\
&  -  \p_r (a' V^{-1/2}) V^{-1/2} e^0 \wedge    e^1 -  a' V^{-1/2}  \p_r(V^{1/2}r) V^{-1} r^{-1} e^0 \wedge e^1 \\
&- p'   V^{-1/2}e^0 \wedge e^2 - p  \p_r (V^{1/2}) V^{-1} e^0 \wedge e^2   \\
& + q'  V^{-1/2} e^0 \wedge e^3  + q \Big(  \p_r ( V^{-1/2}) e^0 \wedge e^3 + \frac{\nu}{2\pi}V^{-3/2} r^{-1} e^1 \wedge e^2\Big)
\Big\}.
\end{split}
\end{align}
Collecting terms,
\begin{align}
\begin{split}
d * d \omega &=   e^{\i k \t1 } \Big\{
\Big[  k^2 a V^{-1} r^{-2}   -  \p_r (a' V^{-1/2}) V^{-1/2}  -  a' V^{-1/2}  \p_r(V^{1/2}r) V^{-1} r^{-1} \Big] e^0 \wedge e^1  \\
& + \Big\{  -  p' V^{-1/2}  -p   \p_r (V^{1/2}) V^{-1} \Big\}  e^0 \wedge e^2 \\
& + \i k q  V^{-1/2} r^{-1} e^1 \wedge   e^3 + \Big\{ q' V^{-1/2} +q  \p_r ( V^{-1/2})  \Big\}e^0 \wedge e^3\\
&+ \Big\{ - \i k p  V^{-1/2} r^{-1} +q  \frac{\nu}{2\pi}V^{-3/2} r^{-1}\Big\} e^1 \wedge e^2 \Big\}.
\end{split}
\end{align}
Recall, we need to take the projection onto the ASD part of this. So we have the following equations.
The $\omega^1_-$ coefficient is
\begin{align}
  k^2 a V^{-1} r^{-2}   -  \p_r (a' V^{-1/2}) V^{-1/2}  -  a' V^{-1/2}  \p_r(V^{1/2}r) V^{-1} r^{-1},\end{align}
which simplifies to
\begin{align}
- V^{-1} \Big(a'' +  r^{-1} a' - k^2 r^{-2} a \Big).
\end{align}
So we have the formula
\begin{align}
{\Delta (  e^{\i k \t1} a(r) \omega^1_-)
=  - e^{\i k \t1} (a'' +  r^{-1} a' - k^2 r^{-2} a) V^{-1}\omega^1_- . }
\label{Expansion-2-form-ASD1}
\end{align}
Recall that solutions of the ODE
\begin{align}
a'' +  r^{-1} a' - k^2 r^{-2} a = 0
\end{align}
are given by
\begin{align}
a(r) =
\begin{cases}
C_1 + C_2 V & k = 0\\
C_1 r^k + C_2 r^{-k} & k \neq 0.
\end{cases}
\end{align}

These prove the $\omega^1_-$ part in Lemma \ref{l:Z2-}.
Next, the $\omega^2_-$ component is
\begin{align}
\begin{split}
(\Id - *) e^{\i k \t1 } &\Big\{   \Big(  -  p' V^{-1/2}  -p   \p_r (V^{1/2}) V^{-1} \Big)  e^0 \wedge e^2 + \i k q  V^{-1/2} r^{-1} e^1 \wedge   e^3 \Big\}\\
& =  e^{\i k \t1 } \Big\{   \Big(  -  p' V^{-1/2}  -p   \p_r (V^{1/2}) V^{-1} \Big) \omega^2_- + \i k q  V^{-1/2} r^{-1} \omega^2_- \Big\}\\
& =  e^{\i k \t1}   \Big(  -  p' V^{-1/2}  - p   \p_r (V^{1/2}) V^{-1}   + \i k q  V^{-1/2} r^{-1}  \Big) \omega^2_- \\
& = - e^{\i k \t1}   \Big(    p'  + \frac{1}{2} (\log(V))' p
  - \i k q  r^{-1}  \Big) V^{-1/2} \omega^2_-.
\end{split}
\end{align}
For the $\omega^3_-$ component, we have
\begin{align}
\notag
(\Id &- *)   e^{\i k \t1 } \Big\{
 \Big( q' V^{-1/2} +q  \p_r ( V^{-1/2})  \Big)e^0 \wedge e^3
+ \Big(- \i k p  V^{-1/2} r^{-1} +q  \frac{\nu}{2\pi}V^{-3/2} r^{-1}\Big) e^1 \wedge e^2 \Big\}\\
\notag
& = e^{\i k \t1 } \Big\{
 \Big( q' V^{-1/2} +q  \p_r ( V^{-1/2})  \Big) \omega^3_-
- \Big(- \i k p  V^{-1/2} r^{-1} +q  \frac{\nu}{2\pi}V^{-3/2} r^{-1}\Big) \omega^3_- \Big\}\\
\notag & =   e^{\i k \t1 } \Big\{ q' V^{-1/2} + q  \p_r ( V^{-1/2})
+ \i k p  V^{-1/2} r^{-1} - q  \frac{\nu}{2\pi}V^{-3/2} r^{-1}\Big\} \omega^3_- \\
\notag & =   e^{\i k \t1 } \Big\{ q'  - \frac{3}{2}  \frac{ \nu}{2\pi}V^{-1} r^{-1} q + \i k p  r^{-1} \Big\} V^{-1/2} \omega^3_- \\
& =   e^{\i k \t1 } \Big\{ q'  - \frac{3}{2} (\log(V))' q + \i k p  r^{-1} \Big\} V^{-1/2} \omega^3_-.
\end{align}
Consequently, we have the formula
\begin{align}
\begin{split}
 \Delta ( e^{\i k \t1} (  b(r) \omega^2_- +  c(r) \omega^3_-))
&= e^{\i k \t1} \Big\{   - \Big(    p'  + \frac{1}{2} (\log(V))' p   - \i k q  r^{-1}  \Big) V^{-1/2}\omega^2_-\\
& \ + \Big( q'  - \frac{3}{2}  (\log(V))' q + \i k p  r^{-1} \Big) V^{-1/2} \omega^3_- \Big\}.
\end{split}
\end{align}
The harmonic condition becomes a first order system for $p$ and $q$:
\begin{align}
\p_r p +  \frac{1}{2}  (\log(V))' p
- \i k q r^{-1} &= 0 \\
 \p_r q - \frac{3}{2} (\log(V))'  q + \i k p  r^{-1}
& = 0.
\end{align}
Define
\begin{align}
\tilde{p} = V^{1/2}  \cdot p,\quad 
\tilde{q}  = V^{-3/2} \cdot  q.
\end{align}
Then the equations become
\begin{align}
\tilde{p}' - \i k r^{-1} V^2  \tilde{q} &= 0 \\
\tilde{q}' +  \i k r^{-1} V^{-2}\tilde{p}  & = 0.
\end{align}
If $k = 0$, then we have
\begin{align}
(\tilde{p}, \tilde{q}) = (C_1, C_2),
\end{align}
so
\begin{align}
(p,q) = ( C_1 V^{-1/2}, C_2 V^{3/2}).
\end{align}
So next, we assume that $k \neq 0$.
To solve this, let us differentiate the second equation to get
\begin{align}
 \tilde{q}'' + \i k (r^{-1} V^{-2})' \tilde{p} + \i k r^{-1} V^{-2} \tilde{p}' = 0.
\end{align}
Using the system again, this is
\begin{align}
\begin{split}
0 &=  \tilde{q}'' + \i k (r^{-1} V^{-2})' \frac{1}{\i k} r V^2 (- \tilde{q})' +
\i k r^{-1} V^{-2} \i k r^{-1} V^2 \tilde{q} \\
  & = \tilde{q}'' - (r^{-1} V^{-2})' r V^2 \tilde{q}' - k^2 r^{-2} \tilde{q}.
\end{split}
\end{align}
Some simplification yields
\begin{align}
 \tilde{q}'' + \Big( \frac{1}{r} + 2 (\log(V))' \Big) \tilde{q}'
-  k^2 r^{-2} \tilde{q} = 0.
\end{align}
For $k \neq 0$, the solutions are
\begin{align}
\tilde{q} = C_1 r^{-k} V^{-1} + C_2 r^k V^{-1}.
\end{align}
Then we can use the second equation
\begin{align}
\tilde{p} = - \frac{1}{\i k} r V^2 \tilde{q}'
\end{align}
to solve for $\tilde{p}$.
This yields
\begin{align}
\begin{split}
\tilde{p} &=    - \frac{1}{\i k} r V^2 \Big(  (-k C_1 r^{-k-1} + k C_2 r^{k-1})V^{-1}
- ( C_1 r^{-k} + C_2 r^k) V^{-2} \frac{ \nu}{2\pi} r^{-1}\Big)\\
& =   - \frac{1}{\i k} \Big(  (-k C_1 r^{-k} + k C_2 r^{k})V
-   \frac{ \nu}{2\pi}( C_1 r^{-k} + C_2 r^{k})  \Big)\\
& =  - \frac{1}{\i k} \Big\{     C_1 \Big( -k V -   \frac{ \nu}{2\pi}\Big) r^{-k}
+ C_2 \Big( k V -   \frac{ \nu}{2\pi} \Big) r^{k} \Big\}.
\end{split}
\end{align}
So to summarize, we have
\begin{align}
(\tilde{p}, \tilde{q}) = (C_1, C_2),
\end{align}
if $k = 0$, and
\begin{align}
(\tilde{p}, \tilde{q}) &=
 C_1\Big(  \frac{\i}{k}  \Big( -k V -   \frac{ \nu}{2\pi}\Big) r^{-k}, r^{-k}V^{-1} \Big)  + C_2 \Big(  \frac{\i}{k}  \Big( k V -   \frac{ \nu}{2\pi} \Big) r^{k}
, r^k V^{-1} \Big)
\end{align}
if $k \neq 0$.

Next, recall the main system is
\begin{align}
 - \i k c  V^{-1/2} r^{-1}   + b' V^{-1/2}  + b  V^{-1/2} r^{-1} & =  p, \\
 - \i k b  V^{-1/2} r^{-1} - c' V^{-1/2} - c  \Big( \frac{\nu}{ \pi} + V \Big) V^{-3/2} r^{-1} & =   q.
\end{align}
The system for $b$ and $c$ then becomes
\begin{align}
\label{hs1}
b' + r^{-1} b - \i k   r^{-1}c  & =  V^{1/2} p,  \\
\label{hs2}
c'  + c  \Big( \frac{\nu}{ \pi} + V \Big) V^{-1} r^{-1}
+  \i k   r^{-1}b & =  - V^{1/2} q.
\end{align}
We let
\begin{align}
\tilde{b} = r b, \quad \tilde{c} = V^2 r c.
\end{align}
Then the system becomes
\begin{align}
\tilde{b}' - \i k V^{-2} r^{-1} \tilde{c} &= V^{1/2} r p, \\
\tilde{c}' + \i k V^2 r^{-1}\tilde{b} &= - V^{5/2} r q.
\end{align}

\subsubsection{$k = 0$} In this case, we know from above that $(p,q) = (C_1 V^{-1/2}, C_2 V^{3/2})$, so the system becomes
 \begin{align}
\tilde{b}'  = C_1 r, \quad 
\tilde{c}' = - C_2 V^{4} r .
\end{align}
The solution is
\begin{align}
\tilde{b} & = A_1  + \frac{C_1}{2} r^2, \\
\tilde{c} & = A_2 - C_2 r^2 \Big(\frac{1}{2} V^4 - \frac{\nu}{2\pi} V^3 + \frac{3}{2} \Big(\frac{\nu}{2\pi}\Big)^2 V^2 - \frac{3}{2} \Big(\frac{\nu}{2\pi}\Big)^3 V + \frac{3}{4} \Big(\frac{\nu}{2\pi}\Big)^4\Big).
\end{align}
So then
\begin{align}
b &= A_1 r^{-1} + \frac{C_1}{2} r, \\
c & = A_2 V^{-2} r^{-1} - C_2 r \Big(\frac{1}{2} V^2 - \frac{\nu}{2\pi} V + \frac{3}{2} \Big(\frac{\nu}{2\pi}\Big)^2 - \frac{3}{2} \Big(\frac{\nu}{2\pi}\Big)^3 V^{-1} + \frac{3}{4} \Big(\frac{\nu}{2\pi}\Big)^4 V^{-2}\Big).
\end{align}

\subsubsection{$k \neq 0$}
If $k \neq 0$, then we differentiate the first equation
of
\begin{align}
\tilde{b}' - \i k V^{-2} r^{-1} \tilde{c} &= V^{1/2} r p, \\
\tilde{c}' + \i k V^2 r^{-1}\tilde{b} &= - V^{5/2} r q,
\end{align}
and use the second equation, and the system for $(p,q)$ to get (after some computation)
\begin{align}
\tilde{b}'' + V^{-1} r^{-1} \Big( \frac{\nu}{ \pi} + V \Big)\tilde{b}'
- k^2 r^{-2} \tilde{b} = V^{-1/2} \Big(   \frac{\nu}{ \pi} + 2 V \Big)p.
\end{align}
Let us make the substitution
\begin{align}
\hat{b} = V \tilde{b}
\end{align}
to get the equation
\begin{align}
\hat{b}'' + r^{-1} \hat{b}' -k^2 r^{-2} \hat{b} = V^{1/2}  \Big(   \frac{\nu}{ \pi} + 2 V \Big)p.
\end{align}
The homogeneous equation is
\begin{align}
\hat{b}'' + r^{-1} \hat{b}' -k^2 r^{-2} \hat{b} = 0,
\end{align}
which has solutions
\begin{align}
\hat{b} = A_1 r^k + A_2 r^{-k},
\end{align}
equivalently,
\begin{align}
b = V^{-1} ( A_1 r^{k-1} + A_2 r^{-k-1}).
\label{e:closed-ASD-b}
\end{align}
We can then solve for $c$ using the equation \eqref{hs1} to get
\begin{align}
c = \frac{1}{\i k} \Big( A_1 V^{-2} r^{k-1} \Big( k V - \frac{\nu}{2\pi} \Big) + A_2 V^{-2} r^{-k-1} \Big( -k V -\frac{\nu}{2\pi} \Big) \Big).
\label{e:closed-ASD-c}
\end{align}
Next, we find a particular solution of
\begin{align}
\hat{b}'' + r^{-1} \hat{b}' -k^2 r^{-2} \hat{b} = \Big(   \frac{\nu}{ \pi} + 2 V \Big) \tilde{p},
\end{align}
where
\begin{align}
\tilde{p} &=
 C_1  \frac{\i}{k}  \Big( -k V -   \frac{ \nu}{2\pi}\Big) r^{-k}  + C_2   \frac{\i}{k}  \Big( k V -   \frac{ \nu}{2\pi} \Big) r^{k}.
\end{align}
We define
\begin{align}
h = r^{-k} \hat{b}.
\end{align}
Then after some computation, the equation becomes
\begin{align}
h'' + (2k+1) r^{-1} h' = \Big(   \frac{\nu}{ \pi} + 2 V \Big)r^{-k} \tilde{p}.
\end{align}
Multiplying by $r^{2k+1}$, we may write this as
\begin{align}
(r^{2k+1} h')' &=  \Big(   \frac{\nu}{ \pi} + 2 V \Big)r^{k+1} \tilde{p}.
\end{align}
This can be integrated to yield
\begin{align}
h' = r^{-2k-1} \int   \Big(   \frac{\nu}{ \pi} + 2 V \Big) r^{k+1}\tilde{p} dr.
\end{align}
We can integrate again to obtain
\begin{align}
h = \int  r^{-2k-1} \int   \Big(   \frac{\nu}{ \pi} + 2 V \Big) r^{k+1}\tilde{p} dr.
\end{align}
Converting back to $b$, we have
\begin{align}
b = V^{-1} r^{k-1} h = V^{-1} r^{k-1}  \int  r^{-2k-1} \int   \Big(   \frac{\nu}{ \pi} + 2 V \Big) r^{k+1}\tilde{p} dr.
\end{align}
When $k \neq \pm 1$, we find that
\begin{align}
\begin{split}
b = \frac{C_1 \i}{k} r^{-k+1} \Big(\frac{k V}{2 k - 2} + \frac{ 2 k - 1}{2 (k - 1)^2}\frac{\nu}{2\pi} + \frac{k^2}{4(k - 1)^3} \frac{\nu^2}{4\pi^2} V^{-1} \Big) \\
+\frac{C_2 \i}{k} r^{k+1} \Big(\frac{k V}{2 k + 2} + \frac{ - 2 k - 1}{2 (k + 1)^2}\frac{\nu}{2\pi} - \frac{k^2}{4(k + 1)^3} \frac{\nu^2}{4\pi^2} V^{-1} \Big).
\end{split}
\end{align}
By \eqref{hs1},
\begin{align}
\begin{split}
c = C_1 r^{-k + 1} \Big(\frac{V}{2 k - 2} + \frac{1}{2 (k - 1)^2}\frac{\nu}{2\pi} - \frac{k - 2}{4(k - 1)^3} \frac{\nu^2}{4\pi^2} V^{-1} - \frac{1}{4(k - 1)^3} \frac{\nu^3}{8\pi^3} V^{-2}\Big) \\
+ C_2 r^{k + 1} \Big(\frac{V}{ - 2 k - 2} + \frac{1}{2 (k + 1)^2}\frac{\nu}{2\pi} - \frac{k + 2}{4(k + 1)^3} \frac{\nu^2}{4\pi^2} V^{-1} + \frac{1}{4(k + 1)^3} \frac{\nu^3}{8\pi^3} V^{-2}\Big).
\end{split}
\end{align}
When $ k = 1$,
\begin{align}
b &= - C_1 \i \Big(\frac{1}{3} \frac{2\pi}{\nu} V^2 + \frac{ 1 }{2} V + \frac{1}{2} \frac{\nu}{2\pi} \Big) + C_2 \i r^{2} \Big(\frac{V}{4} - \frac{ 3}{8}\frac{\nu}{2\pi} - \frac{1}{32} \frac{\nu^2}{4\pi^2} V^{-1} \Big),\\
c &= - C_1 \Big(\frac{1}{3} \frac{2\pi}{\nu} V^2 + \frac{1}{6} V \Big) + C_2 r^{2} \Big( -\frac{V}{ 4} + \frac{1}{8}\frac{\nu}{2\pi} - \frac{3}{32} \frac{\nu^2}{4\pi^2} V^{-1} + \frac{1}{32} \frac{\nu^3}{8\pi^3} V^{-2}\Big).
\end{align}
When $k = - 1$,
\begin{align}
b &= - C_1 \i r^{2} \Big(\frac{V}{ 4} - \frac{ 3}{8}\frac{\nu}{2\pi} - \frac{1}{32} \frac{\nu^2}{4\pi^2} V^{-1} \Big) + C_2 \i \Big(\frac{1}{3} \frac{2\pi}{\nu} V^2 + \frac{ 1 }{2} V + \frac{1}{2} \frac{\nu}{2\pi} \Big),\\
c &= C_1 r^{2} \Big( - \frac{V}{4} + \frac{1}{8}\frac{\nu}{2\pi} - \frac{3}{32} \frac{\nu^2}{4\pi^2} V^{-1} + \frac{1}{32} \frac{\nu^3}{8\pi^3} V^{-2}\Big) - C_2 \Big(\frac{1}{3} \frac{2\pi}{\nu} V^2 + \frac{1}{6} V \Big).
\end{align}

Notice that the Laplacian is a real operator, so we also have conjugate solutions. This completes the proof.
\end{proof}

\bibliographystyle{amsalpha}

\bibliography{CVZ_II_references}

 \end{document}